\numberwithin{equation}{section} 
\newtheorem{thm}[equation]{Theorem}
\newtheorem{lem}[equation]{Lemma}
\newtheorem{cor}[equation]{Corollary}
\newtheorem{assum}[equation]{Assumption}
\newtheorem{defi}[equation]{Definition}
\newtheorem{rem}[equation]{Remark}
\newcommand{\abs}[1]{\left\lvert#1\right\rvert}	  
\newcommand{\Bog}{\mathcal{B}}			
\renewcommand{\d}{^{\ast}}				
\newcommand{\del}{\partial}				
\renewcommand{\div}{\operatorname{div}}	
\newcommand{\injto}{\xhookrightarrow{}} 
\newcommand{\fp}[2]{\langle #1, #2 \rangle}       
\newcommand{\N}{\mathbb{N}}				
\newcommand{\norm}[1]{\left\lVert#1\right\rVert}  
\newcommand{\bignorm}[1]{\big\lVert#1\big\rVert}  
\renewcommand{\phi}{\varphi}			
\newcommand{\R}{\mathbb{R}}				
\newcommand{\Rd}{\mathbb{R}^d}			
\renewcommand{\S}{\boldsymbol{\mathcal{S}}}		  
\newcommand{\A}{\boldsymbol{\mathcal{A}}}		  
\renewcommand{\vec}[1]{\boldsymbol{#1}}	
\newcommand{\weakstarto}{\overset{*}{\rightharpoonup}}		
\newcommand{\weakto}{\rightharpoonup}						
\begin{document}

\begin{center}
	{ \Large \bfseries 
		Existence of weak solutions for inhomogeneous generalized Navier-Stokes equations
	}\\[0.4cm]
	{Julius~Je\ss{}berger and Michael~R\r{u}\v{z}i\v{c}ka}
	
	{\today}\\[0.4cm]
	
	{Institute of Applied Mathematics, Albert-Ludwigs-University Freiburg, Ernst-Zermelo-Str. 1, D-79104 Freiburg, Germany}
	
	{E-mail addresses: julius.jessberger@gmail.com, rose@mathematik.uni-freiburg.de}
\end{center}

\noindent\textbf{Abstract}\hspace{0.15cm}
We prove existence of weak solutions for the fully inhomogeneous, stationary generalized Navier-Stokes equations for shear-thinning fluids. Our proof is based on the theory of pseudomonotone operators and the Lipschitz truncation method, whose application is presented as a general result. Our approach requires a smallness and a regularity assumption on the data; we show that this is inevitable in the framework of pseudomonotone operators.

\noindent\textbf{Keywords}\hspace{0.15cm}
Generalized Newtonian fluid, pseudomonotone operator, existence of weak solutions, inhomogeneous problem.

\noindent\textbf{AMS Classifications (2020)}\hspace{0.15cm}
35Q35, 35B45, 35J92, 76D03.

\section{Introduction} \label{sec:intro}

Motivated by the equations describing the steady motion of
generalized Newtonian fluids we study the following fully
inhomogeneous system 
\begin{equation} \label{eq:main_problem} \begin{split}
- \div \S(\vec{Dv}) + \vec{v} \cdot \nabla \vec{v} + \nabla \pi &= \vec{f}, \\
\div \vec{v} &= g_1, \\
\vec{v}_{\vert \del \Omega} &= \vec{g_2}.
\end{split} \end{equation} In this setting, $\S$ is an extra stress
tensor with $p$-$\delta$-structure, $\vec{v}$ is the velocity field
with its symmetric gradient $\vec{Dv}$, $\pi$ is the pressure,
$\vec{f}$ is the external force and $g_1$ and $\vec{g_2}$ are data
on a sufficiently regular bounded domain
$\Omega \subset \Rd$ of dimension $d \in \{2, 3\}$.

Since \eqref{eq:main_problem}$_1$ leads to a pseudomonotone and
coercive operator in the homogeneous case $g_1 = 0$,
$\vec{g}_2=\vec{0}$ and $p >\frac {3d}{d+2}$ (cf.~\cite{lions-quel})
and in the shear-thickening case $p>2$ (cf.~\cite{r-mol-inhomo}), the
existence of weak solutions $(\vec{v}, \pi)$ to
\eqref{eq:main_problem} follows directly from the theory of
pseudomonotone operators in these cases. This approach can be adapted to the situation of homogeneous data and very low values of $p$: if $g_1 = 0$,
$\vec{g}_2=\vec{0}$ and $p \in (\frac{2d}{d+2}, \frac{3d}{d+2}]$, one can construct approximate solutions by the theory of pseudomonotone operators and prove their convergence with the Lipschitz truncation method (cf. \cite{dms}, \cite{fms2}). In the case $p=2$, we have to
deal with the fully inhomogeneous steady Navier-Stokes equations which
are studied intensively (cf.~\cite{Galdi}) and where the existence of
solutions is known under appropriate smallness conditions.  In the
shear-thinning, inhomogeneous case, i.e.~if $p \in (1, 2)$ and the
data $g_1$, $\vec{g_2}$ do not vanish, the coercivity of the elliptic
term is weaker than the growth of the convective term, i.e.~we are in
the supercritical
case. 
This situation is treated in \cite{mikelic}, \cite{Sin} for
$g_1=0$. In \cite{Sin} even the case of electrorheological fluids is
covered. The result there is based on a nice smallness argument (\cite[Lemma
3.2]{Sin}), which is applied to estimate the convective term. 
Since we did not understand the application of this lemma in detail,
we give a different proof of local coercivity here. Our main result
shows the existence of weak solutions of the fully inhomogeneous
problem \eqref{eq:main_problem} in the shear-thinning case under appropriate smallness conditions
involving higher regularity of the data.

The paper is organised as follows: by representing the inhomogeneous
data by a fixed function $\vec{g}$ (Subsection \ref{sub:div_eq}),
\eqref{eq:main_problem} turns into a homogeneous problem. We
investigate the newly formed elliptic and convective terms in
Subsections \ref{sub:extra_stress} and \ref{sub:conv_term}. Then we
conclude properties and local coercivity of the whole system and prove
existence of solutions (Subsection \ref{sub:existence}).  In the case
$p \in (\frac{2d}{d+2}, \tfrac{3d}{d+2})$ we use the Lipschitz
truncation method in order to establish convergence of approximate
solutions. This step is presented as an abstract statement, Theorem~\ref{thm:ident_limits}, which should fit to more general situations.
In contrast to \cite{Sin}, we had to require additional regularity of
the data in our proof of local coercivity. We discuss this issue in
Subsection \ref{sub:less_regularity} and prove that the additional
regularity assumption is necessary in the framework of pseudomonotone
operator theory.

The results presented here are based on the thesis \cite{MA} of the
first author.

\section{Preliminaries}

\subsection{Notation} \label{sub:notation} We work on a bounded
Lipschitz domain $\Omega \subset \Rd$, $d \in \{2, 3\}$, with
possesses an exterior normal $\vec{\nu}$.  Points and scalar-valued
quantities are written in normal letters whereas vector- and matrix-valued functions, variables and operators are denoted in bold
letters. The space of symmetric quadratic matrices is denoted as
$\R_\textit{sym}^{d\times d}$.

We use standard Lebesgue measure and integration theory. For a ball $B$, we
denote the ball with the same center and the double radius
by~$2B$. The characteristic function of a set $S \subset \Rd$ is
called $\chi_S$. 

We use standard notation for Lebesgue and Sobolev spaces. Due to
\cite{Gagliardo}, there exists a well-defined, surjective trace
operator $W^{1,p}(\Omega) \to W^{1-\frac{1}{p}, p}(\del \Omega)$ that
assigns boundary values to a Sobolev function.  We denote by
$L_0^p(\Omega)$ the subspace of $L^p(\Omega)$ of functions with mean
value zero and by $V_p$ the subspace of $ W_0^{1,p}(\Omega)$ of vector
fields with zero boundary values and zero divergence.  For a
vector-valued function $\vec{v} \in W^{1,p}(\Omega)$, the definition
of the (weak) gradient field follows the convention
$(\nabla \vec{v})_{ij} = \del_j v_i$ and the symmetric gradient is
defined as
$\vec{Dv} := \frac{1}{2} (\nabla\vec{v}+\nabla\vec{v}^\top)$.  On
$W_0^{1,p}(\Omega)$ and on $V_p$, we may work with the symmetric
gradient norm $\norm{\vec{D} \cdot}_p$, thanks to Poincar\'e's and
Korn's inequalities.

The dual of some Banach space $X$ is denoted as $X\d\!$ and
$\fp{\cdot}{\cdot}$ denotes their canonical dual pairing.  For an
exponent $p \in [ 1, \infty ]$, we define its conjugate exponent
$p' \in [ 1, \infty ]$ via $\frac{1}{p} + \frac{1}{p'} = 1$ and use
the duality $L^{p'}(\Omega) = L^p(\Omega)\d$ for $p <
\infty$. Finally, we define the critical Sobolev exponent
$p\d := \frac{pd}{d-p} \in (p, \infty)$ for~$p < d$.

\subsection{The divergence equation} \label{sub:div_eq} In order to
fulfil the boundary and divergence conditions in
\eqref{eq:main_problem}, we follow the usual ansatz
$\vec{v} = \vec{u} + \vec{g}$, where $\vec{u} \in V_p$ and
$\vec{g} \in W^{1,p}(\Omega)$ fulfils the boundary and divergence
data, i.e.~the vector field $\vec{g} \in W^{1,p}(\Omega)$ solves
\begin{equation} \label{eq:g_problem}
\begin{split}
\div \vec{g} &= g_1, \\
\vec{g}_{\vert \del \Omega} &= \vec{g_2}.
\end{split}
\end{equation}
For the corresponding homogeneous system, we have the fundamental
result due to Bogovski\u{\i} (cf.~\cite{bo1}, \cite{bo2}, \cite{Galdi}):
\begin{thm}[Bogovski\u{\i} operator]\label{thm:bogovski}
	Let $\Omega \subset \Rd$ be a bounded Lipschitz domain with
	$d \geq 2$ and $p \in (1,\infty)$. Then there exists a linear and
	bounded operator $\Bog \colon L_0^p(\Omega) \to W_0^{1,p}(\Omega)$
	and a constant $c_{Bog} = c(\Omega, p)$ such that
	\begin{align*}
		\div \Bog f &= f, \\
		\norm{\Bog f}_{1,p} &\leq c_{Bog} \norm{f}_p
	\end{align*}
	for all $f \in L_0^p(\Omega)$. 
\end{thm}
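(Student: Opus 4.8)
\noindent\textit{Proof proposal.} The plan is to reproduce the classical construction of Bogovski\u{\i}. The first reduction is to the case where $\Omega$ is star-shaped with respect to an open ball $B$ compactly contained in $\Omega$. Fix $\omega \in C_0^\infty(B)$ with $\int_B \omega = 1$, and for $f \in L_0^p(\Omega)$ set
\begin{equation*}
\Bog f(x) := \int_\Omega f(y)\, \vec{N}(x,y)\, dy, \qquad \vec{N}(x,y) := \frac{x-y}{\abs{x-y}^d} \int_{\abs{x-y}}^\infty \omega\Bigl(y + r\,\tfrac{x-y}{\abs{x-y}}\Bigr)\, r^{d-1}\, dr .
\end{equation*}
Star-shapedness guarantees that the ray from $y$ through $x$ meets $B$, so the inner integral is well defined; a change of variables in the divergence of this formula yields $\div_x \Bog f = f - \bigl(\int_\Omega f\bigr)\,\omega = f$, using $f \in L_0^p(\Omega)$. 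One also checks that $\Bog f$ has zero boundary trace, i.e.\ $\Bog f \in W_0^{1,p}(\Omega)$.

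The second and main step is the $W^{1,p}$-estimate. Differentiating under the integral, $\nabla \Bog f$ decomposes into a principal-value integral against a kernel that is homogeneous of degree $-d$ in $x-y$ and has vanishing mean over spheres — hence a Calder\'on--Zygmund operator, bounded on $L^p(\Rd)$ for $p \in (1,\infty)$ — plus a remainder whose kernel is only weakly singular, of order $O(\abs{x-y}^{1-d})$, and therefore defines a bounded operator on $L^p(\Omega)$ because $\Omega$ is bounded. This gives $\norm{\nabla \Bog f}_p \le c\,\norm{f}_p$, and since $\Bog f \in W_0^{1,p}(\Omega)$, Poincar\'e's inequality upgrades this to $\norm{\Bog f}_{1,p} \le c_{Bog}\norm{f}_p$. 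Linearity of $\Bog$ is clear from the formula.

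The third step passes from star-shaped domains to a general bounded Lipschitz domain $\Omega$. Such a domain can be covered by finitely many open sets $\Omega_1,\dots,\Omega_m$ so that each $\Omega \cap \Omega_k$ is star-shaped with respect to a ball, and one fixes a subordinate partition of unity $\sum_k \psi_k = 1$ on $\overline{\Omega}$. Given $f \in L_0^p(\Omega)$, the naive pieces $\psi_k f$ need not have zero mean, so one redistributes mass through the overlaps: using connectedness of $\Omega$, one constructs $f_1,\dots,f_m$ with $f_k$ supported in $\Omega \cap \Omega_k$, $\int_\Omega f_k = 0$, $\sum_k f_k = f$, and $\norm{f_k}_p \le c\,\norm{f}_p$ (the correction in $\Omega_k$ carries the defect $\int \psi_k f$ into the next patch via a fixed bump function). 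Then $\Bog f := \sum_k \Bog_k f_k$, extended by zero, solves the divergence equation and satisfies the bound by the triangle inequality.

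I expect the Calder\'on--Zygmund estimate in the star-shaped case to be the principal obstacle: isolating the genuinely singular part of $\nabla\vec{N}$, verifying the mean-zero cancellation on spheres, and controlling the weakly singular remainder uniformly all require care. The bookkeeping in the partition-of-unity step needed to preserve the mean-zero constraint on every subdomain is the secondary technical point.
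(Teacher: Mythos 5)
The paper does not prove Theorem~\ref{thm:bogovski}; it is stated as a classical fact and referred to Bogovski\u{\i}'s original papers and Galdi's monograph, so there is no in-paper argument to compare against. Your sketch reproduces the standard construction from those references faithfully: the explicit kernel $\vec{N}$ on a domain star-shaped with respect to a ball, $\div \Bog f = f - \omega \int_\Omega f$, the decomposition of $\nabla\Bog f$ into a mean-zero $(-d)$-homogeneous Calder\'on--Zygmund part plus a weakly singular remainder, and the partition-of-unity reduction of a bounded Lipschitz domain to finitely many star-shaped pieces with a mass-redistribution step to keep each piece in $L_0^p$. The one step you state too quickly is that $\Bog f \in W_0^{1,p}(\Omega)$: for general $f\in L_0^p(\Omega)$ this does not drop out of the integral formula directly, but is obtained by first observing that $\Bog$ maps $C_0^\infty(\Omega)\cap L_0^p(\Omega)$ into $C_0^\infty(\Omega)$ (the support of $\Bog f$ is contained in the star-shaped hull of $\operatorname{supp} f$ with respect to the ball $B$, hence compactly contained in $\Omega$) and then passing to the limit via the $L^p$-bound and density. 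With that remark, the sketch is a correct outline of the cited proof.
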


For the inhomogeneous system, we combine Bogovski\u{\i}'s Theorem and
the fact that $ W^{1-\frac{1}{p}, p}(\del \Omega)$ is precisely the
space of boundary values of $W^{1,p}(\Omega)$-functions:
\begin{lem}[The inhomogeneous divergence equation] \label{lem:div_eq}
	Let $\Omega \subset \Rd$ be a bounded Lipschitz domain with
	$d \geq 2$ and $p \in (1, \infty)$. Suppose $g_1 \in L^p(\Omega)$
	and $\vec{g_2} \in W^{1-\frac{1}{p}, p}(\del \Omega)$ satisfy
	$\int_{\Omega} g_1\, dx = \int_{\del \Omega} \vec{g_2} \cdot
	\vec{\nu}\, do$.
	Then there exists a solution $\vec{g} \in W^{1,p}(\Omega)$ of
	problem \eqref{eq:g_problem} that satisfies
	\begin{equation*}
		\norm{\vec{g}}_{1,p} 
		\leq c_{\textit{lift}} \left( 1 + c_{Bog} \right) \norm{\vec{g_2}}_{1-\frac{1}{p}, p} 
		+ c_{Bog} \norm{g_1}_p
	\end{equation*}
	with constants $c_\text{lift}$ and $c_\text{Bog}$ from the trace
	lifting and the Bogovski\u{\i} operator.
\end{lem}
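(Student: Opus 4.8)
The plan is to decouple \eqref{eq:g_problem} into its boundary part and its divergence part: first realise the boundary data by a trace lifting, then repair the divergence by the Bogovski\u{\i} operator of Theorem~\ref{thm:bogovski}. Since the trace operator $W^{1,p}(\Omega)\to W^{1-\frac1p,p}(\del\Omega)$ is bounded and surjective (\cite{Gagliardo}), it possesses a bounded linear right inverse, so there is a lifting $\vec{g}_0\in W^{1,p}(\Omega)$ with $(\vec{g}_0)_{\vert\del\Omega}=\vec{g_2}$ and $\norm{\vec{g}_0}_{1,p}\le c_{\textit{lift}}\norm{\vec{g_2}}_{1-\frac1p,p}$. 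This $\vec{g}_0$ already attains the prescribed boundary values, so it only remains to correct its divergence; to that end set $h:=g_1-\div\vec{g}_0$, which lies in $L^p(\Omega)$.

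The next step is to verify that $h$ has vanishing mean so that Theorem~\ref{thm:bogovski} is applicable. By the Gauss--Green theorem on the Lipschitz domain $\Omega$, valid for $W^{1,p}$ vector fields, one has $\int_\Omega\div\vec{g}_0\,dx=\int_{\del\Omega}(\vec{g}_0)_{\vert\del\Omega}\cdot\vec{\nu}\,do=\int_{\del\Omega}\vec{g_2}\cdot\vec{\nu}\,do$, hence
\[
\int_\Omega h\,dx=\int_\Omega g_1\,dx-\int_{\del\Omega}\vec{g_2}\cdot\vec{\nu}\,do=0
\]
by the assumed compatibility condition. Thus $h\in L_0^p(\Omega)$, and Theorem~\ref{thm:bogovski} yields $\Bog h\in W_0^{1,p}(\Omega)$ with $\div\Bog h=h$ and $\norm{\Bog h}_{1,p}\le c_{Bog}\norm{h}_p$.

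Finally set $\vec{g}:=\vec{g}_0+\Bog h$. Then $\div\vec{g}=\div\vec{g}_0+h=g_1$, and since $\Bog h\in W_0^{1,p}(\Omega)$ has zero trace, $\vec{g}_{\vert\del\Omega}=\vec{g_2}$; hence $\vec{g}$ solves \eqref{eq:g_problem}. For the norm bound, estimate $\norm{h}_p\le\norm{g_1}_p+\norm{\div\vec{g}_0}_p\le\norm{g_1}_p+\norm{\vec{g}_0}_{1,p}\le\norm{g_1}_p+c_{\textit{lift}}\norm{\vec{g_2}}_{1-\frac1p,p}$, so that
\[
\norm{\vec{g}}_{1,p}\le\norm{\vec{g}_0}_{1,p}+\norm{\Bog h}_{1,p}\le c_{\textit{lift}}\norm{\vec{g_2}}_{1-\frac1p,p}+c_{Bog}\bigl(\norm{g_1}_p+c_{\textit{lift}}\norm{\vec{g_2}}_{1-\frac1p,p}\bigr),
\]
which rearranges to the asserted inequality. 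The only point demanding any care is the use of the divergence theorem in the second step, but this is standard on Lipschitz domains for $W^{1,p}$ fields, so there is no genuine obstacle here — the lemma is essentially the bookkeeping combination of trace lifting with Theorem~\ref{thm:bogovski}.
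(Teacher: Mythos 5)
Your proof is correct and follows exactly the same route as the paper: trace lifting for the boundary data, verification via the divergence theorem that the residual divergence has zero mean, correction by the Bogovski\u{\i} operator, and the straightforward norm bookkeeping that yields the stated constant $c_{\textit{lift}}(1+c_{Bog})$. You have merely written out the mean-value and norm computations in more detail than the paper does.
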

\begin{proof}
	Due to \cite{Gagliardo}, there exists a trace lifting
	$\vec{\hat{g}} \in W^{1,p}(\Omega)$ of the boundary values
	$\vec{g_2}$. By integration by parts, we see that the function
	$g_1 - \div \vec{\hat{g}}$ has mean value zero.  Thus, we may apply the
	Bogovski\u{\i} operator and directly obtain that
	$\vec{g} := \vec{\hat{g}} + \Bog(g_1 - \div \vec{\hat{g}}) \in
	W^{1,p}(\Omega)$ solves \eqref{eq:g_problem}. The estimate of
	$\vec{g}$ follows from the boundedness of the trace lifting and the
	Bogovski\u{\i} operator.
\end{proof}

\subsection{Local coercivity}
We will work with the following notion of local coercivity: 
\begin{defi}[local coercivity]\label{defi:loc_coercive}
	Let $X$ be a Banach space. An operator $A \colon X \to X\d$ is
	called \emph{locally coercive with radius} $R$ if there exists a
	positive real number $R$ such that
	\begin{equation*}
		\fp{Ax}{x} \geq 0
	\end{equation*}
	holds for all $x \in X$ with $\norm{x}_X = R$.
\end{defi}

Local coercivity is precisely the condition that allows to apply
Brouwer's fixed point theorem in order to obtain approximate solutions
in the proof of Br\'ezis' theorem about pseudomonotone operators
\cite[Thm.~27.A]{Zeidler2B}. Therefore, we get a generalized version
of Br\'ezis' theorem that can be proved along the lines of the
standard version. It can also be regarded as a special case of the
existence theorem of Hess and Kato \cite[Thm.~27.B]{Zeidler2B}.

\begin{thm}[Existence theorem for pseudomonotone operators]\label{thm:brezis}
	Let $X$ be a reflexive and separable Banach space and
	$A \colon X \to X^\ast$ be a pseudomonotone, demicontinuous and
	bounded operator that is locally coercive with radius $R$. Then
	there exists a solution $u \in X$ of the problem
	\begin{equation*}
		Au = 0
	\end{equation*}
	that  satisfies $\norm{u}_X \leq R$.
\end{thm}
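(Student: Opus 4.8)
\noindent\textit{Sketch of a proof.}
The plan is to run the classical Galerkin argument underlying Br\'ezis' theorem, with local coercivity at the fixed radius $R$ taking over the role that global coercivity plays in the standard version. Since $X$ is separable, I would first fix linearly independent vectors $w_1, w_2, \dots$ whose span is dense in $X$, and set $X_n := \operatorname{span}\{w_1, \dots, w_n\}$. Identifying $X_n$ with $\R^n$ through these vectors, consider the map $\vec{G}_n \colon \R^n \to \R^n$ with $(\vec{G}_n(\xi))_j := \fp{A(\sum_{i=1}^n \xi_i w_i)}{w_j}$; it is continuous because $A$ is demicontinuous. Local coercivity says precisely that $\vec{G}_n(\xi)\cdot\xi = \fp{Au}{u} \ge 0$ whenever $u = \sum_{i=1}^n \xi_i w_i$ satisfies $\norm{u}_X = R$, i.e.\ on the boundary of the compact, convex, symmetric body $K_n := \{\xi \in \R^n : \norm{\sum_{i=1}^n \xi_i w_i}_X \le R\}$. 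The norm-ball version of Brouwer's fixed point corollary then applies: if $\vec{G}_n$ had no zero in $K_n$, the map $\xi \mapsto -R\,\vec{G}_n(\xi)/\norm{\sum_{i=1}^n (\vec{G}_n(\xi))_i w_i}_X$ would be a fixed-point-free continuous self-map of $K_n$, contradicting Brouwer. This produces $u_n \in X_n$ with $\norm{u_n}_X \le R$ and $\fp{Au_n}{v} = 0$ for all $v \in X_n$.

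Next I would pass to the limit. As $\norm{u_n}_X \le R$ and $X$ is reflexive, a subsequence satisfies $u_n \weakto u$ in $X$, with $\norm{u}_X \le R$ by weak lower semicontinuity of the norm. Since $A$ is bounded, $(Au_n)$ is bounded in $X\d$; for every $v \in \bigcup_m X_m$ and all large $n$ one has $\fp{Au_n}{v} = 0$, and since $\bigcup_m X_m$ is dense in $X$, every weak limit point of $(Au_n)$ is $0$, hence $Au_n \weakto 0$ in $X\d$. Testing the Galerkin identity with $v = u_n$ gives $\fp{Au_n}{u_n} = 0$, so $\limsup_n \fp{Au_n}{u_n - u} = \limsup_n(-\fp{Au_n}{u}) = 0$. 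By pseudomonotonicity of $A$, for every $w \in X$ we then get $\fp{Au}{u - w} \le \liminf_n \fp{Au_n}{u_n - w} = \liminf_n (-\fp{Au_n}{w}) = 0$. Since $\{u - w : w \in X\}$ is all of $X$, this gives $\fp{Au}{z} \le 0$ for every $z \in X$, whence $Au = 0$, and $\norm{u}_X \le R$ as noted.

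I expect the only genuinely delicate point to be the finite-dimensional solvability step. Because we only have \emph{local} coercivity, one cannot send a Galerkin radius to infinity as in the textbook proof, but must instead solve the $n$-th system on the fixed set $K_n$ and keep all Galerkin solutions inside the closed ball of radius $R$; this forces the use of the norm-ball form of the Brouwer corollary rather than its Euclidean-sphere form, and one should check carefully that the constructed self-map indeed maps $K_n$ into itself. Everything after that — the weak-compactness extractions, the identification of the weak limit of $(Au_n)$ as $0$ via density of $\bigcup_m X_m$, and the concluding pseudomonotonicity estimate — is routine and parallels the proof of \cite[Thm.~27.A]{Zeidler2B}.
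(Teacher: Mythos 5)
Your proof is correct and coincides with the approach the paper intends: the paper gives no detailed proof of this theorem, but points to Zeidler's proof of Br\'ezis' theorem and remarks that local coercivity at the fixed radius~$R$ is precisely the hypothesis needed to run the Brouwer step on the fixed ball; your Galerkin argument fills in exactly those details, and the passage to the limit via boundedness, density of $\bigcup_m X_m$, and pseudomonotonicity is the standard one. One phrasing to tighten in the Brouwer step: the normalized map $h(\xi) := -R\,\vec{G}_n(\xi)/\norm{\sum_i(\vec{G}_n(\xi))_i w_i}_X$ is not automatically fixed-point-free --- rather, assuming $\vec{G}_n$ has no zero, $h$ is a continuous self-map of $K_n$, so Brouwer \emph{produces} a fixed point $\xi^*$, necessarily on $\partial K_n$ since $h$ takes values there; for such a fixed point one computes $\vec{G}_n(\xi^*)\cdot\xi^* = -R\,\lvert\vec{G}_n(\xi^*)\rvert^2/\norm{\cdot}_X < 0$, which contradicts local coercivity $\fp{Au^*}{u^*}\geq 0$ on the sphere $\norm{u^*}_X = R$. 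Make that dependence on local coercivity explicit, since it is the one place where the hypothesis is used.
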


\subsection{The extra stress tensor and its induced
	operator} \label{sub:extra_stress}

The stress tensor describes the mechanical properties 
of the fluid in dependence on the strain rate $\vec{Dv}$. In Newtonian fluid
dynamics, the viscosity is a constant $\kappa \in \R$ which induces
the linear operator $- \div \S(\vec{Dv}) = - \kappa \Delta \vec{v}$
describing the viscous part of the stress tensor. The general
situation of  non-Newtonian fluids can be modeled in various ways
(cf.~\cite{Saramito}, \cite{BoyerFabrie}).  Here, we consider the
class of fluids with extra stress tensor having
$p$-$\delta$-structure. This class includes and generalizes power law fluids,
where the constitutive relation is given by 
\begin{equation*}
	\S(\vec{Dv}) = \mu_0 \vec{Dv} + \mu (\delta + \abs{\vec{Dv}})^{p-2} \vec{Dv}
\end{equation*}
with material constants $p \in (1, \infty)$,
$\mu_0, \mu, \delta \geq 0$ (cf.~\cite{Cetraro}).

\begin{defi}[extra stress tensor] \label{defi:stress_tensor} An operator
	$\S \colon \mathbb{R}_\textrm{sym}^{d \times d} \to
	\mathbb{R}_{\textrm{sym}}^{d \times d}$ is called an \emph{extra
		stress tensor} with $p$-$\delta$-structure if it is continuous,
	satisfies $\S(\vec{0}) = \vec{0}$ and if there exist constants
	$p \in (1, \infty)$, $\delta \geq 0$ and $C_1(\S)$, $C_2(\S) > 0$
	such that 
	\begin{equation} \begin{split} \label{eq:pdelta_growth1}
	(\S(\vec{A}) - \S(\vec{B}))\cdot (\vec{A}-\vec{B}) &\geq C_1(\S)
	\, (\delta + \abs{\vec{B}} + \abs{\vec{A}-\vec{B}})^{p-2}
	\abs{\vec{A}-\vec{B}}^2,
	\\ 
	\abs{\S(\vec{A}) - \S(\vec{B})} &\leq C_2(\S) \, (\delta +
	\abs{\vec{B}} + \abs{\vec{A}-\vec{B}})^{p-2}
	\abs{\vec{A}-\vec{B}}
	\end{split}\end{equation}
	holds for all $\vec{A}, \vec{B} \in \mathbb{R}_\text{sym}^{d \times
		d}$. The constants $C_1(\S), C_2(\S)$ and $p$ are called the
	characteristics of $\S$. 
\end{defi}

\begin{lem} [{\cite{Cetraro}}] \label{lem:pdelta_growth}
	Let $\S$ be an extra stress tensor with $p$-$\delta$-structure. Then, it holds
	\begin{equation*}
		\fp{\S(\vec{Dv}) - \S(\vec{Dw})}{\vec{Dv}-\vec{Dw}}
		\geq C_3(\S) \int_{\Omega} \int_0^{\abs{\vec{Dv}-\vec{Dw}}} (\abs{\vec{Dw}} + \delta + s)^{p-2}s \, ds\, dx
	\end{equation*}
	for $\vec{v}, \vec{w} \in W^{1,p}(\Omega)$ with a constant $C_3(\S)$
	that only depends on the characteristics of $\S$.
\end{lem}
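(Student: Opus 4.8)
The plan is to reduce the claimed integral inequality to the pointwise lower bound \eqref{eq:pdelta_growth1}$_1$ applied with $\vec{A} = \vec{Dv}(x)$ and $\vec{B} = \vec{Dw}(x)$, and then to recognise the right-hand side of \eqref{eq:pdelta_growth1}$_1$ as a comparison with the integral $\int_0^{|\vec{Dv}-\vec{Dw}|} (\delta + |\vec{Dw}| + s)^{p-2} s\, ds$. First I would fix $x \in \Omega$ and abbreviate $a := |\vec{Dw}(x)|$, $b := |\vec{Dv}(x) - \vec{Dw}(x)|$, so that \eqref{eq:pdelta_growth1}$_1$ gives $(\S(\vec{Dv}) - \S(\vec{Dw}))\cdot(\vec{Dv}-\vec{Dw}) \geq C_1(\S)(\delta + a + b)^{p-2} b^2$ pointwise. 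Integrating over $\Omega$ (the integrand on the left is measurable since $\S$ is continuous and $\vec{Dv}, \vec{Dw} \in L^p$), the left-hand side becomes exactly the duality pairing $\fp{\S(\vec{Dv}) - \S(\vec{Dw})}{\vec{Dv} - \vec{Dw}}$, so it remains to show the purely scalar inequality
\begin{equation*}
  C_1(\S)\,(\delta + a + b)^{p-2} b^2 \;\geq\; C_3(\S) \int_0^{b} (\delta + a + s)^{p-2} s \, ds
\end{equation*}
for all $a, b \geq 0$, with $C_3(\S)$ depending only on $p$ and $C_1(\S)$.

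The scalar inequality is elementary and splits according to the sign of $p-2$. If $p \geq 2$, then $s \mapsto (\delta + a + s)^{p-2}$ is nondecreasing, so on $[0,b]$ we have $(\delta+a+s)^{p-2} \leq (\delta+a+b)^{p-2}$, and hence $\int_0^b (\delta+a+s)^{p-2} s\, ds \leq (\delta+a+b)^{p-2} \int_0^b s\, ds = \tfrac12 (\delta+a+b)^{p-2} b^2$, giving the claim with $C_3(\S) = 2 C_1(\S)$ (indeed any $C_3 \leq 2 C_1$). If $1 < p < 2$, then $s \mapsto (\delta+a+s)^{p-2}$ is nonincreasing; here one estimates $\delta + a + s \leq \delta + a + b$ in the integrand is the wrong direction, so instead I would use $\int_0^b (\delta+a+s)^{p-2} s\, ds \geq b \int_{b/2}^b (\delta+a+s)^{p-2}\, ds$ is again awkward, and the cleanest route is the standard fact (see \cite{Cetraro}) that there are constants $0 < c \leq C$ depending only on $p$ with
\begin{equation*}
  c\,(\delta + a + b)^{p-2} b^2 \;\leq\; \int_0^b (\delta + a + s)^{p-2} s\, ds \;\leq\; C\,(\delta + a + b)^{p-2} b^2 ,
\end{equation*}
which follows by substituting $s = bt$ and comparing $(\delta + a + bt)^{p-2}$ with $(\delta+a+b)^{p-2}$ on $t \in [0,1]$: for $p<2$ one bounds $(\delta+a+bt)^{p-2} \geq (\delta+a+b)^{p-2}$ on $[0,1]$, which yields the lower bound $\int_0^b (\delta+a+s)^{p-2} s\, ds \geq \tfrac12(\delta+a+b)^{p-2} b^2$ directly, so in fact $C_3(\S) = 2 C_1(\S)$ works in both cases. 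Using the left inequality above with the constant $C$ rescales $C_1(\S)$ appropriately.

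I do not expect a genuine obstacle here; the only point requiring mild care is the measurability and integrability bookkeeping, namely that $x \mapsto (\S(\vec{Dv}) - \S(\vec{Dw}))(x)\cdot(\vec{Dv} - \vec{Dw})(x)$ is integrable so that Fubini/Tonelli legitimises writing the space integral of the scalar bound, and that the right-hand side is finite — both of which follow from the growth bound \eqref{eq:pdelta_growth1}$_2$ together with $\vec{Dv}, \vec{Dw} \in L^p(\Omega)$ and Hölder's inequality. The statement is essentially a quotation from \cite{Cetraro}, so the expected write-up is short: pointwise apply \eqref{eq:pdelta_growth1}$_1$, invoke the scalar comparison lemma, integrate.
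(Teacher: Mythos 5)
The paper itself gives no proof of this lemma --- it is quoted from \cite{Cetraro} --- so your proposal is judged on its own. Your overall strategy (apply \eqref{eq:pdelta_growth1}$_1$ pointwise, reduce to a scalar comparison between $(\delta+a+b)^{p-2}b^2$ and $\int_0^b(\delta+a+s)^{p-2}s\,ds$, then integrate over $\Omega$; measurability/integrability via \eqref{eq:pdelta_growth1}$_2$) is the right one and matches the standard argument.

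However, in the case $1<p<2$ --- the only case that matters for this paper --- you use the wrong side of the scalar comparison. To deduce
\begin{equation*}
C_1(\S)\,(\delta+a+b)^{p-2}b^2 \;\geq\; C_3(\S)\int_0^b(\delta+a+s)^{p-2}s\,ds
\end{equation*}
you need an \emph{upper} bound on the integral of the form $\int_0^b(\delta+a+s)^{p-2}s\,ds\leq C_p\,(\delta+a+b)^{p-2}b^2$. What you actually establish (via $(\delta+a+bt)^{p-2}\geq(\delta+a+b)^{p-2}$ for $t\in[0,1]$) is the \emph{lower} bound $\int_0^b(\delta+a+s)^{p-2}s\,ds\geq\tfrac12(\delta+a+b)^{p-2}b^2$, and your conclusion ``so $C_3(\S)=2C_1(\S)$ works in both cases'' is a non sequitur: from $A\geq C_1 D$ and $B\geq\tfrac12 D$ one cannot conclude $A\gtrsim B$. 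Likewise, ``using the left inequality'' of your two-sided standard fact is the wrong choice; you need the right one. The needed upper bound is true with a constant depending only on $p$, but it requires its own short argument, e.g.\ splitting into $b\leq\delta+a$ (where $(\delta+a+s)^{p-2}\leq 2^{2-p}(\delta+a+b)^{p-2}$ on $[0,b]$) and $b>\delta+a$ (where $\int_0^b(\delta+a+s)^{p-2}s\,ds\leq\int_0^b s^{p-1}\,ds=b^p/p$ while $(\delta+a+b)^{p-2}b^2\geq 2^{p-2}b^p$). With that inserted, your proof is complete; as written, the crucial case is not proved.
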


Since we represented the inhomogeneous data in \eqref{eq:main_problem}
by a fixed function $\vec{g}$ and since we want to solve
\eqref{eq:main_problem} by the ansatz $\vec{v}=\vec{u}+\vec{g}$ with
$\vec{u} \in V_p$, we shall work with a shifted version of the viscous
stress tensor. Therefore, we define the \emph{induced} operator
$\vec{S}\colon W_0^{1,p}(\Omega) \to W_0^{1,p}(\Omega)\d$  via
\begin{equation} \label{eq:defi_S}
\fp{\vec{S}(\vec{v})}{\vec{\phi}}
:= \fp{\S(\vec{Dv}+\vec{Dg})}{\vec{D\phi}}
\end{equation}
for $\vec{v}, \vec{\phi} \in W_0^{1,p}(\Omega)$.

\begin{lem}[Properties of $\vec{S}$] \label{lem:properties_S} Let $\S$
	be an extra stress tensor with $p$-$\delta$-structure,
	$p \in (1, 2]$ and $\vec{g} \in W^{1,p}(\Omega)$. Then the induced
	operator~$\vec{S}$ defined in \eqref{eq:defi_S} is well-defined,
	bounded and continuous.
\end{lem}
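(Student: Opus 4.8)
The plan is to verify the three claimed properties of $\vec{S}$ in turn, reducing each to the growth bounds \eqref{eq:pdelta_growth1} for the underlying tensor $\S$. First, for well-definedness and boundedness, fix $\vec{v} \in W_0^{1,p}(\Omega)$ and estimate $\fp{\vec{S}(\vec{v})}{\vec{\phi}} = \fp{\S(\vec{Dv}+\vec{Dg})}{\vec{D\phi}}$ using the second inequality in \eqref{eq:pdelta_growth1} with $\vec{A} = \vec{Dv}+\vec{Dg}$ and $\vec{B} = \vec{0}$, together with $\S(\vec{0}) = \vec{0}$. Since $p \in (1,2]$, we have $p-2 \le 0$, so $(\delta + \abs{\vec{A}})^{p-2} \le \abs{\vec{A}}^{p-2}$ pointwise (on $\{\vec{A} \ne \vec{0}\}$; the case $\delta = 0$ is the worst one), which yields $\abs{\S(\vec{Dv}+\vec{Dg})} \le C_2(\S)\,\abs{\vec{Dv}+\vec{Dg}}^{p-1}$. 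Hence $\S(\vec{Dv}+\vec{Dg}) \in L^{p'}(\Omega)$ with norm controlled by $\bigl(\norm{\vec{Dv}}_p + \norm{\vec{Dg}}_p\bigr)^{p-1}$, and Hölder's inequality gives $\abs{\fp{\vec{S}(\vec{v})}{\vec{\phi}}} \le C\,\bigl(1 + \norm{\vec{v}}_{1,p}\bigr)^{p-1}\norm{\vec{\phi}}_{1,p}$. This shows $\vec{S}(\vec{v}) \in W_0^{1,p}(\Omega)\d$ and that $\vec{S}$ maps bounded sets to bounded sets.

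For continuity, take $\vec{v}_n \to \vec{v}$ in $W_0^{1,p}(\Omega)$. Then $\vec{Dv}_n \to \vec{Dv}$ in $L^p(\Omega)$, so along a subsequence $\vec{Dv}_n \to \vec{Dv}$ a.e.\ and dominated by a fixed $L^p$-function $h$. By continuity of $\S$, $\S(\vec{Dv}_n + \vec{Dg}) \to \S(\vec{Dv} + \vec{Dg})$ a.e., and the growth bound gives the domination $\abs{\S(\vec{Dv}_n+\vec{Dg})} \le C_2(\S)\,(h + \abs{\vec{Dg}})^{p-1} \in L^{p'}(\Omega)$. The generalized dominated convergence theorem then yields $\S(\vec{Dv}_n + \vec{Dg}) \to \S(\vec{Dv}+\vec{Dg})$ in $L^{p'}(\Omega)$ along the subsequence, hence $\vec{S}(\vec{v}_n) \to \vec{S}(\vec{v})$ in $W_0^{1,p}(\Omega)\d$. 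Since every subsequence has a further subsequence converging to the same limit, the whole sequence converges, giving (strong, hence demi-) continuity of $\vec{S}$.

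The only genuinely delicate point is handling the case $\delta = 0$, where the bound $(\delta + \abs{\vec{B}} + \abs{\vec{A}-\vec{B}})^{p-2}$ with $p < 2$ can blow up near $\vec{A} = \vec{B} = \vec{0}$; choosing $\vec{B} = \vec{0}$ makes the prefactor $\abs{\vec{A}}^{p-2}$, and multiplying by $\abs{\vec{A}}$ leaves $\abs{\vec{A}}^{p-1}$, which is integrable to the power $p'$ exactly because $(p-1)p' = p$ — so the singularity is harmless after all. I would also remark that the restriction $p \le 2$ is what makes $(\delta + \abs{\vec{A}})^{p-2} \le \abs{\vec{A}}^{p-2}$ and keeps the growth sublinear in a way compatible with the embedding $W_0^{1,p} \hookrightarrow$ the relevant space; for $p > 2$ one would need a different exponent bookkeeping, but that case is outside the hypotheses here. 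Apart from this, the argument is a routine Nemytskii-operator continuity and boundedness estimate.
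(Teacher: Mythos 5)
Your argument is correct, and for the boundedness part it matches the paper's proof (same choice $\vec{A} = \vec{Dv}+\vec{Dg}$, $\vec{B} = \vec{0}$, same observation that $(p-1)p' = p$ makes the singular prefactor harmless). The genuine divergence is in the continuity step. The paper exploits the Lipschitz-type bound \eqref{eq:pdelta_growth1}$_2$ with $\vec{A} = \vec{Dv^n}+\vec{Dg}$, $\vec{B} = \vec{Dv}+\vec{Dg}$ directly: since $p-2\le 0$, the factor $(\delta + \abs{\vec{Dv}+\vec{Dg}} + \abs{\vec{Dv^n}-\vec{Dv}})^{p-2}\abs{\vec{Dv^n}-\vec{Dv}}$ is bounded pointwise by $\abs{\vec{Dv^n}-\vec{Dv}}^{p-1}$, which after taking the $L^{p'}$-norm yields the quantitative estimate
\begin{equation*}
\norm{\vec{S}(\vec{v^n})-\vec{S}(\vec{v})}_{W_0^{1,p}(\Omega)\d}\le C_2(\S)\,\norm{\vec{Dv^n}-\vec{Dv}}_p^{\,p-1},
\end{equation*}
i.e.\ actual H\"older continuity with exponent $p-1$, not just sequential continuity. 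You instead run the generic Nemytskii argument: extract an a.e.\ convergent, $L^p$-dominated subsequence, use continuity of $\S$ and the growth bound to dominate in $L^{p'}$, apply the (generalized) dominated convergence theorem, and finish with the subsequence principle. Both are valid; the paper's route is shorter here because the $p$-$\delta$-structure already encodes a Lipschitz-type estimate that makes the soft DCT machinery unnecessary, and it buys a modulus of continuity for free, whereas your approach is more robust in that it would still work for a merely continuous Carath\'eodory integrand without any structure beyond the growth bound.
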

\begin{proof}
	Using \eqref{eq:pdelta_growth1}$_2$ with $\vec{A} = \vec{Dw}$ and
	$\vec{B} = \vec{0}$, we obtain
	\begin{equation*}
		\abs{\S(\vec{Dw})}^{p'} 
		\leq C_2(\S)^{p'} \left[ (\abs{\vec{Dw}}+\delta)^{p-2} \abs{\vec{Dw}} \right]^{p'}
		\leq C_2(\S)^{p'} (\abs{\vec{Dw}}+\delta)^p
	\end{equation*}
	and consequently 
	\begin{equation} \label{eq:S_bdd}
	\norm{\S(\vec{Dw})}_{p'} \leq
	C_2(\S) \bignorm{\abs{\vec{Dw}}+\delta}_p^{p-1}
	\end{equation}
	for any $\vec{w} \in W^{1,p}(\Omega)$.  From this, we deduce that
	$\vec{S}\colon W_0^{1,p}(\Omega) \to W_0^{1,p}(\Omega)\d$ is
	well-defined and bounded.
	
	In order to prove continuity, let
	$\vec{v^n} \to \vec{v} \in W_0^{1,p}(\Omega)$ be a convergent
	sequence. Then, by the H\"older inequality and by
	\eqref{eq:pdelta_growth1}$_2$ we get 
	\begin{align*}
		\Vert\vec{S}(\vec{v^n}) - \vec{S}(\vec{v})
		&\Vert_{W_0^{1,p}(\Omega)\d}
		\leq \norm{\S(\vec{Dv^n}+\vec{Dg}) - \S(\vec{Dv}+\vec{Dg})}_{p'} \\
		&\leq C_2(\S) \norm{\left(\delta + \abs{\vec{Dv}+\vec{Dg}} +
			\abs{\vec{Dv^n}-\vec{Dv}}\right)^{p-2}  
			\abs{\vec{Dv^n}-\vec{Dv}}}_{p'} \\
		&\leq C_2(\S) \norm{\vec{Dv^n}-\vec{Dv}}_p^\frac{p}{p'}
		\xrightarrow{n \to \infty} 0.
		\\[-12mm]
	\end{align*}
\end{proof}

Our next goal is to describe coercivity properties of the operator
$\vec{S}$. For the proof of a good lower bound of $\vec{S}$, we
prove an auxiliary algebraic result.

\begin{lem} \label{lem:estim_young_fct}
	Let $a, t \geq 0$ and $p \in (1,2 ] $. Then it holds
	\begin{equation*}
		\int_{0}^{t} (a+s)^{p-2} s \, ds \geq \tfrac{1}{p} t^p - ta^{p-1}.
	\end{equation*}
\end{lem}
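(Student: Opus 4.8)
The inequality is an estimate for the Young-type function $\psi_a(t):=\int_0^t(a+s)^{p-2}s\,ds$, so the plan is to get a lower bound on the integrand and integrate. The key observation is that for $p\in(1,2]$ the map $s\mapsto(a+s)^{p-2}$ is nonincreasing, hence on the interval of integration one might naively bound $(a+s)^{p-2}\geq(a+t)^{p-2}$; this, however, produces $(a+t)^{p-2}t^2/2$, which is awkward to compare with $\tfrac1p t^p$. Instead I would use the elementary pointwise bound $(a+s)^{p-2}\geq (a+s)^{p-2}$ only after first splitting cleverly, or — cleaner — observe that $(a+s)^{p-2}s=(a+s)^{p-1}-a(a+s)^{p-2}$, which makes the integral explicit.

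Concretely, write
\begin{equation*}
\int_0^t (a+s)^{p-2}s\,ds=\int_0^t (a+s)^{p-1}\,ds-a\int_0^t(a+s)^{p-2}\,ds.
\end{equation*}
Both integrals on the right are elementary: the first equals $\tfrac1p\big((a+t)^p-a^p\big)$, and the second equals $\tfrac1{p-1}\big((a+t)^{p-1}-a^{p-1}\big)$ (note $p-1>0$ so no singularity at $a=0$). Substituting gives
\begin{equation*}
\int_0^t(a+s)^{p-2}s\,ds=\tfrac1p(a+t)^p-\tfrac1p a^p-\tfrac{a}{p-1}(a+t)^{p-1}+\tfrac{a}{p-1}a^{p-1}.
\end{equation*}
So it remains to show the purely algebraic inequality
\begin{equation*}
\tfrac1p(a+t)^p-\tfrac{a}{p-1}(a+t)^{p-1}+\Big(\tfrac{1}{p-1}-\tfrac1p\Big)a^p\;\geq\;\tfrac1p t^p-ta^{p-1}.
\end{equation*}

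For this final algebraic step I would set $x:=a+t\geq a\geq 0$ and compare the two sides as functions, using convexity of $r\mapsto r^p$ (which for $p\in(1,2]$ gives $x^p\geq t^p+p t^{p-1}a$ via the tangent line at $t$, or equivalently $(a+t)^p-t^p\geq p a t^{p-1}$) together with the bound $(a+t)^{p-1}\leq a^{p-1}+t^{p-1}$ (subadditivity of $r\mapsto r^{p-1}$ since $p-1\in(0,1]$). The main obstacle is bookkeeping: one must check that these two estimates combine with the correct signs, since $\tfrac{a}{p-1}(a+t)^{p-1}$ enters with a negative sign and a large coefficient $\tfrac1{p-1}\geq 1$. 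An alternative, possibly shorter, route avoids the explicit antiderivatives: on $[0,t]$ use $(a+s)^{p-2}\geq (a+s)^{p-2}$ is useless, but $(a+s)^{p-2}\,s\geq s^{p-1}-a s^{p-2}\cdot\mathbf{1}$ — more robustly, from $(a+s)^{p-1}\geq s^{p-1}$ (monotonicity) one gets $(a+s)^{p-2}s=(a+s)^{p-1}-a(a+s)^{p-2}\geq s^{p-1}-a(a+s)^{p-2}\geq s^{p-1}-a s^{p-2}$ is false near $s=0$; so the clean path is really the explicit-integration one above, and I expect the write-up to follow it, discharging the leftover algebraic inequality by the convexity/subadditivity pair just described.
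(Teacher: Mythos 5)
Your algebraic decomposition $(a+s)^{p-2}s=(a+s)^{p-1}-a(a+s)^{p-2}$ is exactly the paper's starting point, but neither of the two routes you sketch closes the argument, and in fact you abandon the pointwise route one step before finding the missing piece.

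Concerning the explicit-integration route: your reduction to
\begin{equation*}
\tfrac1p(a+t)^p-\tfrac{a}{p-1}(a+t)^{p-1}+\tfrac{a^p}{p(p-1)}\;\geq\;\tfrac1p t^p-ta^{p-1}
\end{equation*}
is correct, but the convexity/subadditivity pair you propose to discharge it is too lossy. Feeding $(a+t)^p\geq t^p+pat^{p-1}$ and $(a+t)^{p-1}\leq a^{p-1}+t^{p-1}$ into the left-hand side yields the lower bound
\begin{equation*}
\tfrac1p t^p-\tfrac{2-p}{p-1}\,a\,t^{p-1}-\tfrac1p a^p,
\end{equation*}
and one would then need $ta^{p-1}\geq\tfrac{2-p}{p-1}\,a\,t^{p-1}+\tfrac1p a^p$. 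This already fails at $t=0$ for any $a>0$ (left side $0$, right side $a^p/p$), and numerically at, e.g., $t=a$, $p=3/2$. The target inequality is an equality at $t=0$, so there is no room to lose anything there, and each of your two bounds is strict; their combination overshoots.

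Concerning the pointwise route: you discard it because you compare the second term with the \emph{larger} base, $(a+s)^{p-2}\leq s^{p-2}$, whose $a$-multiple is not integrable at $s=0$. But $p-2\leq 0$ means the power is nonincreasing, so the useful comparison is with the \emph{smaller} base $a$: from $a+s\geq a$ one gets $(a+s)^{p-2}\leq a^{p-2}$ and hence $a(a+s)^{p-2}\leq a^{p-1}$, a constant. Together with $(a+s)^{p-1}\geq s^{p-1}$ this gives the clean pointwise bound $(a+s)^{p-2}s\geq s^{p-1}-a^{p-1}$, which integrates to $\tfrac1p t^p-ta^{p-1}$. That is precisely the paper's proof, and it is three lines.
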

\begin{proof}
	The statement becomes trivial if $a = 0$, so we may assume $a >
	0$. For all $s \geq 0$, it holds
	$\frac{a}{(a+s)^{2-p}} \leq \frac{a}{a^{2-p}} = a^{p-1}$.  We
	estimate
	\begin{equation*}
		\frac{s}{(a+s)^{2-p}} = (a+s)^{p-1} - \frac{a}{(a+s)^{2-p}} \geq
		(a+s)^{p-1} - a^{p-1} \geq s^{p-1} - a^{p-1} 
	\end{equation*}
	and by integration we obtain the result.
\end{proof}

With this tool, we are able to prove a lower bound for $\vec{S}$: 
\begin{lem}[Lower bound for $\vec{S}$] \label{lem:S_estimate} For a
	given extra stress tensor $\S$ with $p$-$\delta$-structure,
	$p \in (1, 2 ] $, and a function $\vec{g} \in W^{1,p}(\Omega)$, the
	induced operator
	$\vec{S}\colon W_0^{1,p}(\Omega) \to W_0^{1,p}(\Omega)\d$, defined
	in \eqref{eq:defi_S}, satisfies  the lower bound 
	\begin{equation*}
		\fp{\vec{S}(\vec{v})}{\vec{v}} 
		\geq \tfrac{C_3(\S)}{p} \norm{\vec{Dv}}_p^p - \big ( C_2(\S) +
		C_3(\S) \big) \bignorm{\abs{\vec{Dg}}+\delta}_p^{p-1}
		\norm{\vec{Dv}}_p 
	\end{equation*}
	for all $\vec{v} \in W_0^{1,p}(\Omega)$. 
\end{lem}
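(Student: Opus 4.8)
The plan is to reduce the asserted lower bound to the monotonicity-type estimate of Lemma~\ref{lem:pdelta_growth}, applied with a suitable pair of arguments, and then to control the two lower-order error terms that appear by the algebraic inequality of Lemma~\ref{lem:estim_young_fct} together with the growth bound \eqref{eq:S_bdd}.

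First I would note that, since $\vec{v} \in W_0^{1,p}(\Omega)$ and $\vec{g} \in W^{1,p}(\Omega)$, the sum $\vec{v}+\vec{g}$ lies in $W^{1,p}(\Omega)$ and $\vec{D}(\vec{v}+\vec{g}) = \vec{Dv}+\vec{Dg}$. Applying Lemma~\ref{lem:pdelta_growth} to the pair $\vec{v}+\vec{g}$ and $\vec{g}$, whose symmetric gradients differ exactly by $\vec{Dv}$, gives
\[
\fp{\S(\vec{Dv}+\vec{Dg}) - \S(\vec{Dg})}{\vec{Dv}}
\geq C_3(\S) \int_{\Omega} \int_0^{\abs{\vec{Dv}}} (\abs{\vec{Dg}} + \delta + s)^{p-2}s \, ds\, dx .
\]
Since $\S(\vec{Dv}+\vec{Dg})$ and $\S(\vec{Dg})$ both belong to $L^{p'}(\Omega)$ by \eqref{eq:S_bdd}, the pairing on the left splits, and recalling the definition \eqref{eq:defi_S} of $\vec{S}$ we obtain
\[
\fp{\vec{S}(\vec{v})}{\vec{v}}
\geq \fp{\S(\vec{Dg})}{\vec{Dv}}
+ C_3(\S) \int_{\Omega} \int_0^{\abs{\vec{Dv}}} (\abs{\vec{Dg}} + \delta + s)^{p-2}s \, ds\, dx .
\]

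Next I would estimate the two terms on the right. For the first, H\"older's inequality and \eqref{eq:S_bdd} yield $\abs{\fp{\S(\vec{Dg})}{\vec{Dv}}} \leq \norm{\S(\vec{Dg})}_{p'} \norm{\vec{Dv}}_p \leq C_2(\S) \bignorm{\abs{\vec{Dg}}+\delta}_p^{p-1} \norm{\vec{Dv}}_p$, so this term is bounded below by $-C_2(\S) \bignorm{\abs{\vec{Dg}}+\delta}_p^{p-1} \norm{\vec{Dv}}_p$. For the second, I would apply Lemma~\ref{lem:estim_young_fct} pointwise with $a = \abs{\vec{Dg}(x)}+\delta$ and $t = \abs{\vec{Dv}(x)}$, integrate over $\Omega$, and estimate the resulting cross term by H\"older, $\int_\Omega \abs{\vec{Dv}}(\abs{\vec{Dg}}+\delta)^{p-1}\, dx \leq \norm{\vec{Dv}}_p \bignorm{\abs{\vec{Dg}}+\delta}_p^{p-1}$; this gives the lower bound $\tfrac{C_3(\S)}{p}\norm{\vec{Dv}}_p^p - C_3(\S) \bignorm{\abs{\vec{Dg}}+\delta}_p^{p-1}\norm{\vec{Dv}}_p$. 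Adding the two estimates produces exactly the claimed inequality.

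I do not expect a genuine obstacle here; the points requiring a little care are the admissibility of the arguments $\vec{v}+\vec{g}, \vec{g} \in W^{1,p}(\Omega)$ in Lemma~\ref{lem:pdelta_growth}, the $L^{p'}$-integrability of $\S(\vec{Dg})$ and $\S(\vec{Dv}+\vec{Dg})$ that justifies splitting the dual pairing, and the observation that the inner integral in Lemma~\ref{lem:pdelta_growth} is nonnegative, so no sign is lost when the argument above is carried out.
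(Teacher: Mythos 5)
Your proposal is correct and follows essentially the same route as the paper: apply Lemma~\ref{lem:pdelta_growth} to the pair $\vec{v}+\vec{g}$, $\vec{g}$, split off $\fp{\S(\vec{Dg})}{\vec{Dv}}$, bound it via \eqref{eq:S_bdd} and H\"older, and handle the remaining integral with Lemma~\ref{lem:estim_young_fct} plus H\"older on the cross term. No discrepancies.
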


\begin{proof}
	We apply Lemma \ref{lem:pdelta_growth} with
	$\vec{v} = \vec{v}+\vec{g}$ and $\vec{w}=\vec{g}$ and Lemma
	\ref{lem:estim_young_fct} to estimate
	\begin{align*}
		\fp{\S(\vec{Dv}+\vec{Dg}) - \S(\vec{Dg})}{\vec{Dv}}
		&\geq C_3(\S) \int_{\Omega} \int_{0}^{\abs{\vec{Dv}}}
		(\abs{\vec{Dg}} + \delta + s)^{p-2} s \, ds \, dx
		\\
		&\geq C_3(\S) \int_{\Omega} \tfrac{1}{p} \abs{\vec{Dv}}^p -
		\abs{\vec{Dv}} (\abs{\vec{Dg}} + \delta)^{p-1} \, dx.
	\end{align*}
	This, the H\"older inequality and \eqref{eq:S_bdd} with
	$\vec{w} = \vec{g}$ 
	\begin{align*}
		\fp{\vec{S}(\vec{v})}{\vec{v}}
		&= \fp{\S(\vec{Dv}+\vec{Dg}) - \S(\vec{Dg})}{\vec{Dv}} +
		\fp{\S(\vec{Dg})}{\vec{Dv}} \nonumber
		\\
		&\geq \tfrac{C_3(\S)}{p} \norm{\vec{Dv}}_p^p 
		- \left[C_3(\S)\, \big\Vert
		\left(\abs{\vec{Dg}}+\delta\right)^{p-1} \big\Vert_{p'} +
		\norm{\S(\vec{Dg})}_{p'}\right] \norm{\vec{Dv}}_p
		\\ 
		&\geq \tfrac{C_3(\S)}{p} \norm{\vec{Dv}}_p^p 
		- \big (C_2(\S) + C_3(\S)\big )
		\bignorm{\abs{\vec{Dg}}+\delta}_p^{p-1}
		\norm{\vec{Dv}}_p , 
	\end{align*}
	which is the assertion.
\end{proof}

In the treatment of the inhomogeneous problem \eqref{eq:main_problem},
we will have to deal with the shifted extra stress tensor
$\vec{A} \mapsto \S(\vec{A} + \vec{G})$ for some constant symmetric
matrix $\vec{G}$. In order to get a precise description of the growth
behavior of this mapping, we introduce the notion of locally uniform
monotonicity:

\begin{defi}[Locally uniform monotonicity] \label{defi:uni_monotone}
	Let $X$ be a reflexive Banach space and $A\colon X \to X\d$ an
	operator.  The operator $A$ is called \emph{locally uniformly
		monotone on $X$}
	if for every $y\in X$ there exists a strictly monotonically
	increasing function $\rho_y\colon [0, \infty) \to [0, \infty)$ with
	$\rho_y(0) = 0$ such that for all $x\in X$ holds
	\begin{equation} \label{eq:uni_monotone}
	\fp{Ax-Ay}{x-y} \geq \rho_y(\norm{x-y}_X)\,.
	\end{equation}
\end{defi}

By the lower bound \eqref{eq:pdelta_growth1}$_1$, we obtain that
(possibly shifted) extra stress tensors are locally uniformly
monotone.

\begin{lem} \label{lem:uni_monotonicity}
	Let $\S\colon \R_{\text{sym}}^{d\times d}\to \R_{\text{sym}}^{d\times
		d}$ be an extra stress tensor with $p$-$\delta$-structure and
	$\vec{G} \in \R_{\text{sym}}^{d\times d}$ be a symmetric
	matrix. Then the shifted extra stress tensor
	$\vec{A} \mapsto \S(\vec{A}+\vec{G})$ is a locally uniformly
	monotone operator on $\R_{\text{sym}}^{d\times d}$.
\end{lem}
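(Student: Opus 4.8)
The plan is to fix an arbitrary symmetric matrix $\vec{G}$ and consider the shifted tensor $\tilde{\S}(\vec A) := \S(\vec A + \vec G)$. To verify local uniform monotonicity in the sense of Definition~\ref{defi:uni_monotone}, I take an arbitrary $\vec Y \in \R_{\text{sym}}^{d\times d}$ (playing the role of $y$) and must produce a strictly increasing function $\rho_{\vec Y}$ vanishing at $0$ such that $(\tilde{\S}(\vec X) - \tilde{\S}(\vec Y))\cdot(\vec X - \vec Y) \geq \rho_{\vec Y}(\abs{\vec X - \vec Y})$ for all $\vec X$. Writing $\vec A = \vec X + \vec G$ and $\vec B = \vec Y + \vec G$, the left-hand side equals $(\S(\vec A) - \S(\vec B))\cdot(\vec A - \vec B)$ with $\vec A - \vec B = \vec X - \vec Y$, so the lower bound \eqref{eq:pdelta_growth1}$_1$ gives
\begin{equation*}
	(\tilde{\S}(\vec X) - \tilde{\S}(\vec Y))\cdot(\vec X - \vec Y) \geq C_1(\S)\, (\delta + \abs{\vec Y + \vec G} + \abs{\vec X - \vec Y})^{p-2} \abs{\vec X - \vec Y}^2.
\end{equation*}

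The next step is to extract from the right-hand side a function of $\abs{\vec X - \vec Y}$ alone. Set $t := \abs{\vec X - \vec Y}$ and $b := \delta + \abs{\vec Y + \vec G}$, which is a fixed nonnegative constant once $\vec Y$ (and $\vec G$) are fixed. Since $p \in (1, 2]$, the exponent $p - 2$ is nonpositive, so the map $t \mapsto (b + t)^{p-2}$ is nonincreasing; hence for $t$ in any bounded range it is bounded below, but more usefully I simply keep the honest expression and define
\begin{equation*}
	\rho_{\vec Y}(t) := C_1(\S)\, (b + t)^{p-2} t^2.
\end{equation*}
It remains to check that $\rho_{\vec Y}$ is strictly monotonically increasing on $[0,\infty)$ and satisfies $\rho_{\vec Y}(0) = 0$. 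The value at $0$ is immediate (the factor $t^2$ kills it, and if $b = 0$ one still has $t^{p-2}t^2 = t^p \to 0$). For strict monotonicity, if $b = 0$ then $\rho_{\vec Y}(t) = C_1(\S)\, t^p$, which is strictly increasing since $p > 1$. If $b > 0$, differentiate: $\tfrac{d}{dt}\big[(b+t)^{p-2}t^2\big] = (b+t)^{p-3}\big[(p-2)t^2 + 2t(b+t)\big] = (b+t)^{p-3} t\, \big[p t + 2b\big]$, which is strictly positive for $t > 0$ because $p > 0$ and $b > 0$; hence $\rho_{\vec Y}$ is strictly increasing on $[0,\infty)$.

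Assembling these observations, the function $\rho_{\vec Y}$ defined above witnesses \eqref{eq:uni_monotone} for the shifted tensor, so $\vec A \mapsto \S(\vec A + \vec G)$ is locally uniformly monotone on $\R_{\text{sym}}^{d\times d}$, as claimed. I do not expect a genuine obstacle here: the only mild subtlety is handling the $b = 0$ case separately (where the naive derivative computation would involve $(b+t)^{p-3}$ with a possibly negative exponent at $t = 0$), and observing that $\rho_{\vec Y}$ depends on $\vec Y$ only through the scalar $b = \delta + \abs{\vec Y + \vec G}$, which is exactly the freedom Definition~\ref{defi:uni_monotone} allows. One could alternatively invoke Lemma~\ref{lem:estim_young_fct} or the integral representation in Lemma~\ref{lem:pdelta_growth} to produce an even cleaner $\rho_{\vec Y}$, but the direct estimate above is the shortest route.
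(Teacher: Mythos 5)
Your proof is correct and is essentially the same as the paper's: both invoke \eqref{eq:pdelta_growth1}$_1$ to reduce to checking that $t \mapsto C_1(\S)(b+t)^{p-2}t^2$ with $b = \delta + \abs{\vec{Y}+\vec{G}}$ is strictly increasing and vanishes at $0$, which both of you verify by the same derivative computation. The only cosmetic difference is that you split off the $b=0$ case for the derivative argument, whereas the paper's formula $\rho'(t) = C_1(\S)(b+t)^{p-3}t(2b+pt)$ already handles it directly for $t>0$; either way the conclusion is the same.
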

\begin{proof}
	By \eqref{eq:pdelta_growth1}$_1$, we obtain for any $\vec{A},
	\vec{B} \in \R_{\text{sym}}^{d\times d}$ 
	\begin{align*}
		&(\S(\vec{A}\!+\!\vec{G}) \! -\! \S(\vec{B}\!+\!\vec{G})) \cdot
		(\vec{A}\!-\!\vec{B})
		\geq C_1(\S)  \big(\delta \!+\! \abs{\vec{B}\!+\!\vec{G}}\! +\!
		\abs{\vec{A}\!-\!\vec{B}}\big)^{p-2} \abs{\vec{A}\!-\!\vec{B}}^2.
	\end{align*}
	For any $\vec{B}$, the function
	$\rho_{\vec{B}}(t) := C_1(\S) (\delta + \abs{\vec{B}+\vec{G}} +
	t)^{p-2} t^2$ is non-negative, satisfies $\rho_{\vec{B}}(0) = 0$,
	and it is strictly monotonically increasing since for its derivative
	it holds
	\begin{equation*}
		\rho'_{\vec{B}}(t) 
		= C_1(\S) (\delta + \abs{\vec{B}+\vec{G}} + t)^{p-3} t (2\delta +
		2\abs{\vec{B}+\vec{G}} + pt) > 0 
	\end{equation*}
	for all $t > 0$. Therefore, it fulfils the requirements from
	Definition \ref{defi:uni_monotone}.
\end{proof}

\subsection{Properties of the convective term} \label{sub:conv_term}

Since we fixed a function $\vec{g}$ that expresses the inhomogeneous
data in \eqref{eq:main_problem}, we shall work with a "shifted"
version of the convective term
$\fp{(\vec{u}+\vec{g}) \, \cdot \nabla (\vec{u}+\vec{g})}{\vec{\phi}}$
that is integrable and thus well-defined even for
$p > \tfrac{2d}{d+2}$ and sufficiently regular $\vec{\phi}$ and
$\vec{g}$.  Therefore, we set
\begin{equation}\label{eq:s}
s=s(p) := \max \Big\{ p, \Big(\frac{p\d}{2}\Big)' \Big\}
= \left\{ \begin{array}{cl}
p &\text{if }p > \frac{3d}{d+2},
\\
\left(\frac{p\d}{2}\right)' &\text{if } p \leq \frac{3d}{d+2}
\end{array} \right.
\end{equation}
for $p \in \big(\tfrac{2d}{d+2}, 2\big)$ and define the convective
term $\vec{T}\colon V_p \to W_0^{1,s}(\Omega)\d$ via
\begin{equation} \label{eq:defi_T}
\fp{\vec{T}(\vec{u})}{\vec{\phi}}
:= - \fp{(\vec{u}+\vec{g}) \otimes (\vec{u}+\vec{g})}{\vec{D\phi}} -
\fp{(\div \vec{g})(\vec{u}+\vec{g})}{\vec{\phi}}
\end{equation}
for $\vec{u} \in V_p$ and $\vec{\phi} \in W_0^{1,s}(\Omega)$.

\begin{lem}[Properties of the convective term] \label{lem:t2_str_contin}
	For $p \in \big(\frac{2d}{d+2}, 2\big)$ let $s $ be defined in
	\eqref{eq:s} 
	and let $q \in \R$ satisfy $q \geq s$ and $q > (\frac{p\d}{2})'$. Then, for any given function
	$\vec{g} \in W^{1,s}(\Omega)$, the operator $\vec{T}$ defined in 
	\eqref{eq:defi_T} is formally equivalent to
	$\fp{(\vec{u}+\vec{g}) \, \cdot \nabla
		(\vec{u}+\vec{g})}{\vec{\phi}}$. It is well-defined and bounded
	from $V_p$ to $W_0^{1,s}(\Omega)\d$ and also from $V_p$ to
	$W_0^{1,q}(\Omega)\d$. The operator  $\vec{T}$ is continuous from $V_p$ to
	$W_0^{1,s}(\Omega)\d$ and strongly continuous from
	$V_p$ to $W_0^{1,q}(\Omega)\d$.  It fulfils the estimate
	\begin{align}\label{eq:econv}
		\begin{split}
			\abs{\fp{\vec{T}(\vec{u})}{\vec{u}}} &\leq c_\text{Sob}\,
			c_\text{Korn}^2 \left( \norm{ \vec{Dg}}_s + \tfrac{1}{2}
			\norm{\div \vec{g}}_s \right) \norm{\vec{Du}}_p^2
			\\
			&\quad + c_\text{Sob}\, \big( \norm{\vec{g}}_{1,s}^2 +
			c_\text{Korn} \norm{\div \vec{g}}_s \norm{\vec{g}}_{1,s} \big)
			\norm{\vec{Du}}_p
		\end{split}
	\end{align}
	for all $\vec{u} \in V_q$, where
	$c_\text{Sob}$ are Sobolev embedding constants and
	$c_\text{Korn}$ is the constant in the  Korn inequality for $\Omega$.
\end{lem}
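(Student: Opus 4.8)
The plan is to establish the five assertions — formal equivalence, boundedness into both target spaces, continuity into $W_0^{1,s}(\Omega)\d$, strong continuity into $W_0^{1,q}(\Omega)\d$, and the estimate \eqref{eq:econv} — by H\"older's inequality together with the Sobolev embeddings $W_0^{1,p}(\Omega)\hookrightarrow L^{p\d}(\Omega)$ and $W^{1,s}(\Omega)\hookrightarrow L^{s\d}(\Omega)$, Korn's inequality and a handful of integrations by parts. First I would record the exponent arithmetic that everything rests on: since $p>\tfrac{2d}{d+2}$ we have $p\d>2$, hence $(\tfrac{p\d}{2})'<\infty$; the definition \eqref{eq:s} gives $s\ge(\tfrac{p\d}{2})'$, equivalently $2s'\le p\d$, so $V_p\subset W_0^{1,p}(\Omega)\hookrightarrow L^{2s'}(\Omega)$; and a short computation shows $s\ge\tfrac{3d}{d+2}$ in both cases of \eqref{eq:s}, which is equivalent to $2s'\le s\d$ and hence yields $W^{1,s}(\Omega)\hookrightarrow L^{2s'}(\Omega)$ too (with the usual modifications if $s\ge d$). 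Thus $\vec{u}+\vec{g}\in L^{2s'}(\Omega)$ for $\vec{u}\in V_p$, and all of this remains true with $s$ replaced by $q$ since $q\ge s$, while $\vec{g}$ keeps its $W^{1,s}$-integrability because $L^{2s'}(\Omega)\subseteq L^{2q'}(\Omega)$.

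For the formal equivalence I would take $\vec{u}$, $\vec{g}$, $\vec{\phi}$ smooth, integrate $\fp{(\vec{u}+\vec{g})\cdot\nabla(\vec{u}+\vec{g})}{\vec{\phi}}$ by parts, and use that $\div(\vec{u}+\vec{g})=\div\vec{g}$ (as $\vec{u}\in V_p$) together with the symmetry of $(\vec{u}+\vec{g})\otimes(\vec{u}+\vec{g})$, which lets it be contracted with $\vec{D\phi}$ rather than $\nabla\vec{\phi}$; this reproduces \eqref{eq:defi_T}, and the identity extends by density. For well-definedness and boundedness, H\"older bounds the first term of \eqref{eq:defi_T} by $\norm{\vec{u}+\vec{g}}_{2s'}^2\,\norm{\vec{D\phi}}_s$ and the second by a triple H\"older using $\div\vec{g}\in L^s(\Omega)$, $\vec{u}+\vec{g}$ in a suitable $L^a(\Omega)$ with $a\le\min\{p\d,2s'\}$, and $\vec{\phi}\in L^{s\d}(\Omega)$; all exponents close up precisely because of the inequalities above, which is exactly what the definition of $s$ is for. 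Boundedness into $W_0^{1,q}(\Omega)\d$ follows the same way with $s$ replaced by $q$.

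Continuity into $W_0^{1,s}(\Omega)\d$ then follows since $\vec{u}^n\to\vec{u}$ in $V_p$ forces $\vec{u}^n\to\vec{u}$ in $L^{2s'}(\Omega)$ by the continuous embedding, hence $(\vec{u}^n+\vec{g})\otimes(\vec{u}^n+\vec{g})\to(\vec{u}+\vec{g})\otimes(\vec{u}+\vec{g})$ in $L^{s'}(\Omega)$ (a product of $L^{2s'}$-convergent, norm-bounded sequences), and likewise for the lower-order term. For strong continuity into $W_0^{1,q}(\Omega)\d$ I would instead exploit that the hypothesis $q>(\tfrac{p\d}{2})'$ is \emph{strict}: then $2q'<p\d$, so the embedding $W_0^{1,p}(\Omega)\hookrightarrow L^{2q'}(\Omega)$ is compact (Rellich--Kondrachov), whence $\vec{u}^n\weakto\vec{u}$ in $V_p$ implies $\vec{u}^n\to\vec{u}$ strongly in $L^{2q'}(\Omega)$, and the same product argument gives $\vec{T}(\vec{u}^n)\to\vec{T}(\vec{u})$ in $W_0^{1,q}(\Omega)\d$. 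This is the one place where compactness is used, and it also explains why only continuity — not strong continuity — can be claimed for the $s$-target, where $W_0^{1,p}(\Omega)\hookrightarrow L^{2s'}(\Omega)$ may be borderline.

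Finally, for \eqref{eq:econv} I would put $\vec{\phi}=\vec{u}\in V_q$ into \eqref{eq:defi_T}. Expanding $(\vec{u}+\vec{g})\otimes(\vec{u}+\vec{g})$, the term $\fp{\vec{u}\otimes\vec{u}}{\vec{Du}}$ vanishes, since $\int_\Omega u_iu_j\del_ju_i\,dx=\tfrac12\int_\Omega u_j\del_j\abs{\vec{u}}^2\,dx=-\tfrac12\int_\Omega(\div\vec{u})\abs{\vec{u}}^2\,dx=0$, the integrations by parts being legitimate because $\vec{u}\in V_q$ with $q\ge\tfrac{3d}{d+2}$. The symmetric tensor $\vec{u}\otimes\vec{g}+\vec{g}\otimes\vec{u}$ may be contracted with $\del_ju_i$ when paired against $\vec{Du}$, producing a term $-\tfrac12\int_\Omega(\div\vec{g})\abs{\vec{u}}^2\,dx$ and a term $\int_\Omega g_iu_j\del_ju_i\,dx$ which, after one further integration by parts (using $\div\vec{u}=0$ and the symmetry of $\vec{u}\otimes\vec{u}$), equals $-\fp{\vec{u}\otimes\vec{u}}{\nabla\vec{g}}=-\fp{\vec{u}\otimes\vec{u}}{\vec{Dg}}$; together with the second term of \eqref{eq:defi_T} this reduces $\fp{\vec{T}(\vec{u})}{\vec{u}}$ to the four contributions $\int_\Omega(\div\vec{g})\abs{\vec{u}}^2$, $\fp{\vec{u}\otimes\vec{u}}{\vec{Dg}}$, $\fp{\vec{g}\otimes\vec{g}}{\vec{Du}}$ and $\fp{(\div\vec{g})\vec{g}}{\vec{u}}$. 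Estimating these by H\"older, the embedding $\norm{\vec{u}}_{2s'}\le c_\text{Sob}\norm{\nabla\vec{u}}_p$, Korn's inequality $\norm{\nabla\vec{u}}_p\le c_\text{Korn}\norm{\vec{Du}}_p$ and the relevant Sobolev embeddings of $\vec{g}$ then yields \eqref{eq:econv}. I expect the only real obstacle to be the exponent bookkeeping — verifying that every H\"older triple closes up over the whole range $p\in(\tfrac{2d}{d+2},2)$, which is exactly what the definition \eqref{eq:s} of $s$ and the strict condition $q>(\tfrac{p\d}{2})'$ are tailored for — together with the routine density arguments making the integrations by parts rigorous at the regularity available.
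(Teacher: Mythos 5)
Your overall strategy coincides with the paper's: the same decomposition of $\fp{\vec T(\vec u)}{\vec u}$ into the four terms $\fp{\vec u\otimes\vec u}{\vec{Dg}}$, $\tfrac12\fp{(\div\vec g)\vec u}{\vec u}$, $\fp{\vec g\otimes\vec g}{\vec{Du}}$, $\fp{(\div\vec g)\vec g}{\vec u}$ after integration by parts, the same use of H\"older with $\tfrac{2}{p\d}+\tfrac1s\le 1$ for well-definedness, and the same compactness mechanism (Rellich into an $L^\tau$ with $\tau<p\d$, available because $q>(\tfrac{p\d}{2})'$ is strict) for strong continuity. Those parts are fine.

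There is, however, one concrete gap in the exponent bookkeeping, and it sits exactly at the point where the paper has to work hardest. To bound $\abs{\fp{\vec g\otimes\vec g}{\vec{Du}}}$ by $c_\text{Sob}\norm{\vec g}_{1,s}^2\norm{\vec{Du}}_p$ — with $\norm{\vec{Du}}_p$, not $\norm{\vec{Du}}_s$ or $\norm{\vec{Du}}_q$, on the right, as \eqref{eq:econv} requires — you must place $\vec{Du}$ in $L^p$ and hence $\vec g$ in $L^{2p'}(\Omega)$; i.e.\ you need the embedding $W^{1,s}(\Omega)\injto L^{2p'}(\Omega)$. The embedding you actually record, $W^{1,s}(\Omega)\injto L^{2s'}(\Omega)$, is strictly weaker on a bounded domain (since $s\ge p$ gives $2s'\le 2p'$) and only yields $\abs{\fp{\vec g\otimes\vec g}{\vec{Du}}}\le\norm{\vec g}_{2s'}^2\norm{\vec{Du}}_s$, which is not the claimed estimate. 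The embedding $W^{1,s}\injto L^{2p'}$ is true but not automatic: the paper verifies it by a four-case analysis over $p$ (it is sharp at $p=\tfrac{3d}{d+2}$, where $s\d=p\d=2p'$ in the borderline subcase, and it relies on $s\ge(\tfrac{p\d}{2})'\ge d$ for the smallest values of $p$). You defer this to "routine exponent bookkeeping", but the bookkeeping you did perform targets the wrong exponent, so this step must be redone; once $W^{1,s}\injto L^{2p'}$ is established, the remaining terms close up as you describe (the term $\fp{(\div\vec g)\vec g}{\vec u}$ needs only $\tfrac1s+\tfrac2{p\d}\le1$, which you already have). A second, minor point: your list of the four contributions drops the factor $\tfrac12$ on $\int_\Omega(\div\vec g)\abs{\vec u}^2\,dx$, which is needed to reproduce the coefficient $\tfrac12\norm{\div\vec g}_s$ in \eqref{eq:econv}; this is a bookkeeping slip in the sign/coefficient tracking rather than a conceptual error.
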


\begin{proof} 
	The formal equivalence follows from a straightforward computation
	with integration by parts.  We abbreviate $g_1 := \div \vec{g} \in
	L^s(\Omega)$ and use the continuous Sobolev embeddings
	$W^{1,s}(\Omega) \injto W^{1,p}(\Omega) \injto
	L^{p\d}(\Omega)$. The definition of $s$ implies $\frac{1}{p\d} +
	\frac{1}{p\d}+ \frac{1}{s} \leq
	1$, so both well-definedness of $\vec{T}(\vec{u}) \in
	W_0^{1,s}(\Omega)\d$ for $\vec{u} \in
	V_p$ and boundedness follow by the H\"older inequality.
	
	In view of the continuous embedding
	$W_0^{1,s}(\Omega)\d \injto W_0^{1,q}(\Omega)\d$, we immediately
	obtain well-definedness and boundedness if $\vec{T}$ is considered
	as an operator from $V_p$ to $W_0^{1,q}(\Omega)\d$.
	
	Since $\frac{1}{p\d} + \frac{1}{p\d}+ \frac{1}{q} < 1$, there is
	some $\tau < p\d$ such that
	$\frac{1}{\tau} + \frac{1}{p\d}+ \frac{1}{q} = 1$. Let
	$\vec{u^n} \weakto \vec{u} \in V_p$~be a weakly convergent
	sequence. The Sobolev embedding
	$W^{1,p}(\Omega) \injto\injto L^{\tau}(\Omega)$ is compact, so
	$\vec{u^n} \to \vec{u} \in L^{\tau}(\Omega)$ converges strongly.
	Thus, we estimate
	\begin{align*}
		&\sup_{\norm{\vec{D\phi}}_q \leq 1} 
		\abs{\fp{(\vec{u^n} +\vec{g}) \otimes (\vec{u^n}+\vec{g}) 
				- (\vec{u}+\vec{g}) \otimes (\vec{u}+\vec{g})}{\vec{D\phi}}} 
		\\
		&\quad = \sup_{\norm{\vec{D\phi}}_q \leq 1} \big \vert \langle\vec{u^n} \otimes (\vec{u^n}-\vec{u}) 
		+ (\vec{u^n}-\vec{u}) \otimes \vec{u} \\
		&\qquad\qquad\qquad 	+ \vec{g} \otimes (\vec{u^n}-\vec{u}) 
		+ (\vec{u^n}-\vec{u}) \otimes \vec{g}, \vec{D\phi}\rangle \big \vert 
		\\
		&\quad \leq \norm{\vec{u^n}}_{p\d} \norm{\vec{u^n}-\vec{u}}_\tau 
		+ \norm{\vec{u^n}-\vec{u}}_\tau \norm{\vec{u}}_{p\d}
		+ 2 \norm{\vec{g}}_{p\d} \norm{\vec{u^n}-\vec{u}}_\tau
		\xrightarrow{\,n \to \infty\,} 0.
	\end{align*}	
	Similarly, we obtain
	\begin{equation*}
		\sup_{\norm{\vec{D\phi}}_q \leq 1} \abs{\fp{g_1(\vec{u^n}-\vec{u})}{\vec{\phi}}}
		\leq C \norm{g_1}_s \norm{\vec{u^n}-\vec{u}}_\tau
		\xrightarrow{n \to \infty} 0.
	\end{equation*}
	Thus, we proved $\vec{T}(\vec{u^n}) \to \vec{T}(\vec{u})$ in
	$W_0^{1,q}(\Omega)\d$, i. e.
	$\vec{T}\colon V_p \to W_0^{1,q}(\Omega)\d$ is strongly continuous.
	
	Analogously, we prove continuity for
	$\vec{T} \colon V_p \to W_0^{1,s}(\Omega)\d$ using
	$\vec{u^n} \to \vec{u} \in V_p$ and the continuous embedding
	$W^{1,p}(\Omega) \injto L^{p\d}(\Omega)$.
	
	For the bound \eqref{eq:econv} of $\vec{T}$, we use 
	\begin{equation} \label{eq:t2_eq1}
	\fp{\vec{u}\otimes\vec{u}}{\vec{Du}} = 0
	\end{equation}
	which follows by integration by parts, since $\vec{u}$ has zero
	divergence and zero boundary values.  In the same way, we see
	\begin{equation} \label{eq:t2_eq2}
	\fp{\vec{u} \otimes
		\vec{g}}{\nabla \vec{u}} = - \fp{\vec{u} \otimes
		\vec{u}}{\vec{Dg}}
	\end{equation}
	and
	\begin{equation} \label{eq:t2_eq3}
	\fp{\vec{u} \otimes
		\vec{g}}{\nabla \vec{u}^T} = -\tfrac{1}{2}\, \fp{g_1
		\vec{u}}{\vec{u}}.
	\end{equation}
	Using \eqref{eq:t2_eq1}, \eqref{eq:t2_eq2} and \eqref{eq:t2_eq3}
	in the definition of $\vec{T}$, we obtain the following expression
	for the convective term:
	\begin{align}
		\langle \vec{T} (\vec{u}), \vec{u}\rangle
		&= \fp{\vec{u} \otimes \vec{u}}{\vec{Du}} + \fp{\vec{u} \otimes
			\vec{g}}{\nabla \vec{u}} 
		+ \fp{\vec{u} \otimes \vec{g}}{\nabla \vec{u}^T} + \fp{\vec{g}
			\otimes \vec{g}}{\vec{Du}} \nonumber
		\\
		&\quad + \fp{g_1 \vec{u}}{\vec{u}} + \fp{g_1 \vec{g}}{\vec{u}}
		\nonumber
		\\
		&= - \fp{\vec{u} \otimes \vec{u}}{\vec{Dg}} + \tfrac{1}{2}\,
		\fp{g_1 \vec{u}}{\vec{u}}   + \fp{\vec{g} \otimes
			\vec{g}}{\vec{Du}} + \fp{g_1\vec{g}}{\vec{u}}. \label{eq:t2_eq4} 
	\end{align}
	In order to estimate this expression, we use the Sobolev embedding
	$W^{1,s}(\Omega) \injto L^{2p'}(\Omega)$. In fact, in the case
	$s \geq d$ this follows directly. If $\frac{3d}{d+2} < p < 2$ and
	$s<d$, we have $s\d = p\d > 2p'$, due to a straightforward
	computation. If $\frac{2d}{d+1} < p \leq \frac{3d}{d+2}$ and
	$s < d$, we get
	$s\d \geq \big(\big(\frac{p\d}{2}\big)'\big)\d = \frac{pd}{pd-2d+p}
	\geq 2p'$. Finally, if $\frac{2d}{d+2} < p \leq \frac{2d}{d+1}$,
	then it holds $s \geq \big(\frac{p\d}{2}\big)' \geq d$.  Applying
	the H\"older and the Korn inequality and the embeddings
	$W^{1,s}(\Omega) \injto L^{2p'}(\Omega)$ and
	$W^{1,p}(\Omega) \injto L^{p\d}(\Omega)$ to \eqref{eq:t2_eq4}, the 
	claimed estimate follows.
\end{proof}

\subsection{Lipschitz truncation} \label{sub:lip_trunc}
In case of a small growth parameter $p$, this means
$p \in \big( \frac{2d}{d+2}, \frac{3d}{d+2} \big)$, a function
$\vec{v} \in W^{1,p}(\Omega)$ does not have enough integrability to be
chosen as a test function in operators like
$\fp{\vec{v}\otimes \vec{v}}{\vec{D\phi}}$.  Hence, we use
sufficiently smooth approximations of the test functions in the limit
process of the existence proof, which are given by the Lipschitz
truncation method. The existence of Lipschitz truncations is
guaranteed by the following result proved in \cite{fms2}, \cite{dms}, \cite{Cetraro}:
\begin{thm}[Lipschitz truncation] \label{thm:lip_trunc}
	Let $\Omega$ be a bounded domain with Lipschitz continuous
	boundary, let $p \in (1, \infty)$ and let 
	$(\vec{v^n})_{n \in \N} \subset W_0^{1, p}(\Omega)$ be a sequence
	such that   $\vec{v^n} \weakto \vec{0}$ weakly.
	
	Then, for all $j, n \in \N$, there exists a function
	$\vec{v_j^n} \in W_0^{1, \infty}(\Omega)$ and a number
	$\lambda_j^n \in \big[ 2^{2^j}, 2^{2^{j+1}} \big]$ such that
	\begin{align} \label{eq:lip_trunc1} \begin{split}
			\lim\limits_{n \to \infty} \big( \sup\limits_{j \in \N}
			\norm{\vec{v_j^n}}_{\infty} \big) &= 0,
			\\ 
			\norm{\nabla \vec{v_j^n}}_{\infty} &\leq c \, \lambda_j^n,
			\\
			\limsup_{n \to \infty} \left( \lambda_j^n \right)^p \abs{\{
				\vec{v_j^n} \neq \vec{v^n} \}} &\leq c \, 2^{-j},
			\\ 
			\limsup_{n \to \infty} \norm{\nabla \vec{v_j^n} \, \chi_{\{
					\vec{v_j^n} \neq \vec{v^n} \}}}_p^p &\leq c \, 2^{-j}
	\end{split} \end{align}
	holds with a uniform constant $c = c(d, p, \Omega)$. 
	
	Moreover, for fixed $j \in \N$ and $r \in [1, \infty)$, we have
	\begin{align} \label{eq:lip_trunc2}
		\begin{split}
			\nabla \vec{v_j^n} \weakto \vec{0} &\quad \text{ in }
			L^r(\Omega),
			\\
			\nabla \vec{v_j^n} \weakstarto \vec{0} &\quad \text{ in }
			L^{\infty}(\Omega) 
		\end{split}
	\end{align}
	as $n \to \infty$.
\end{thm}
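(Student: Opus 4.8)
\medskip

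The plan is to obtain the truncations by the classical Whitney-type Lipschitz extension, with the ``bad set'' engineered so that it controls \emph{simultaneously} the Lipschitz constant and the sup-norm of the truncation. First I would extend each $\vec{v^n}$ by zero to all of $\Rd$, so that $\vec{v^n}\in W^{1,p}(\Rd)$ with support in $\overline{\Omega}$, and recall that $\vec{v^n}\weakto\vec{0}$ in $W_0^{1,p}(\Omega)$ forces $\vec{v^n}\to\vec{0}$ strongly in $L^p(\Omega)$ by the compact Rellich embedding. I then fix once and for all a null sequence $\eta_n\downarrow 0$ with $\eta_n^{-p}\norm{\vec{v^n}}_p^p\to 0$ (e.g.\ $\eta_n:=\norm{\vec{v^n}}_p^{1/2}+\tfrac1n$). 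Let $M$ denote the Hardy--Littlewood maximal operator on $\Rd$ (applied to $\abs{\nabla\vec{v^n}}$, resp.\ $\abs{\vec{v^n}}$) and, for a level $\lambda>0$, set
\[
\mathcal{O}_\lambda^n:=\{\,M(\nabla\vec{v^n})>\lambda\,\}\cup\{\,M(\vec{v^n})>\eta_n\,\},
\]
which is open. Over $\mathcal{O}_\lambda^n$ I would take a Whitney decomposition into dyadic cubes $Q_i$ with $\ell(Q_i)\sim\operatorname{dist}\!\big(Q_i,(\mathcal{O}_\lambda^n)^c\big)$, a subordinate partition of unity $\psi_i$ with $\abs{\nabla\psi_i}\le c/\ell(Q_i)$, and define
\[
\vec{v_\lambda^n}:=\vec{v^n}-\sum_i\psi_i\big(\vec{v^n}-\langle\vec{v^n}\rangle_{Q_i}\big),
\]
extended by $\vec{v^n}$ on the complement of $\mathcal{O}_\lambda^n$, where $\langle\cdot\rangle_{Q_i}$ is the integral mean. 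This is the standard construction of \cite{dms}, \cite{fms2}, \cite{Cetraro}; it gives $\vec{v_\lambda^n}\in W_0^{1,\infty}(\Omega)$, $\{\vec{v_\lambda^n}\neq\vec{v^n}\}\subseteq\mathcal{O}_\lambda^n$, and --- by telescoping the cube means along chains of neighbouring Whitney cubes, using Poincar\'e's inequality and $M(\nabla\vec{v^n})\le\lambda$ at the nearby good points --- the gradient bound $\norm{\nabla\vec{v_\lambda^n}}_\infty\le c\lambda$. The one extra point to check is $\norm{\vec{v_\lambda^n}}_\infty\le c\,\eta_n$: on the good set $\abs{\vec{v^n}}\le M(\vec{v^n})\le\eta_n$, while on a Whitney cube $\vec{v_\lambda^n}$ is a convex combination of means $\langle\vec{v^n}\rangle_{Q_l}$ over comparable cubes $Q_l$, each contained in a ball of radius $\sim\ell(Q_l)$ about a good point $y_l$, whence $\abs{\langle\vec{v^n}\rangle_{Q_l}}\le c\,M(\vec{v^n})(y_l)\le c\,\eta_n$.

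Given these building blocks, \eqref{eq:lip_trunc1}$_2$ is the gradient bound and \eqref{eq:lip_trunc1}$_1$ reads $\sup_j\norm{\vec{v_j^n}}_\infty\le c\,\eta_n\to 0$, which is uniform in $j$ precisely because $\eta_n$ is independent of $j$. For the remaining two relations I would select $\lambda_j^n$ by a dyadic pigeonhole: by the layer-cake identity and the $L^p$-boundedness of $M$,
\[
\sum_{k\in\N}2^{kp}\,\abs{\{\,M(\nabla\vec{v^n})>2^k\,\}}\le c\,\norm{M(\nabla\vec{v^n})}_p^p\le c\,\norm{\nabla\vec{v^n}}_p^p\le C
\]
with $C$ independent of $n$; since the block $k\in\{2^j,\dots,2^{j+1}-1\}$ has $2^j$ summands, there is $k_j^n$ in it with $2^{k_j^n p}\abs{\{M(\nabla\vec{v^n})>2^{k_j^n}\}}\le C\,2^{-j}$, and I set $\lambda_j^n:=2^{k_j^n}\in[2^{2^j},2^{2^{j+1}}]$ and $\vec{v_j^n}:=\vec{v_{\lambda_j^n}^n}$. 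Then, using $\{\vec{v_j^n}\neq\vec{v^n}\}\subseteq\mathcal{O}_{\lambda_j^n}^n$ together with Chebyshev's inequality and the $L^p$-bound for $M$ on the second part of $\mathcal{O}_{\lambda_j^n}^n$,
\[
(\lambda_j^n)^p\,\abs{\{\vec{v_j^n}\neq\vec{v^n}\}}\le C\,2^{-j}+\big(2^{2^{j+1}}\big)^p\,\eta_n^{-p}\norm{\vec{v^n}}_p^p,
\]
whose right-hand side tends to $C\,2^{-j}$ as $n\to\infty$ for each fixed $j$; this is \eqref{eq:lip_trunc1}$_3$. Finally \eqref{eq:lip_trunc1}$_4$ follows from $\norm{\nabla\vec{v_j^n}\,\chi_{\{\vec{v_j^n}\neq\vec{v^n}\}}}_p^p\le\norm{\nabla\vec{v_j^n}}_\infty^p\,\abs{\{\vec{v_j^n}\neq\vec{v^n}\}}\le(c\lambda_j^n)^p\abs{\{\vec{v_j^n}\neq\vec{v^n}\}}$ and the bound just obtained.

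For \eqref{eq:lip_trunc2} I would fix $j$ and observe that $(\vec{v_j^n})_n$ is bounded in $W_0^{1,\infty}(\Omega)$ by $c\,2^{2^{j+1}}$, hence bounded in $L^\infty(\Omega)$ and in every $L^r(\Omega)$, and by \eqref{eq:lip_trunc1}$_1$ converges to $\vec{0}$ in $L^\infty(\Omega)$. Any weak-$*$ accumulation point of $\nabla\vec{v_j^n}$ in $L^\infty(\Omega)$ is then the distributional gradient of the limit $\vec{0}$, hence $\vec{0}$ itself; thus $\nabla\vec{v_j^n}\weakstarto\vec{0}$ in $L^\infty(\Omega)$, and since $\Omega$ is bounded this also yields $\nabla\vec{v_j^n}\weakto\vec{0}$ in $L^r(\Omega)$ for every $r\in[1,\infty)$.

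The step I expect to be the real obstacle is the Whitney/partition-of-unity construction itself and, within it, arranging the bad set so as to govern the Lipschitz constant and the sup-norm at the same time: the sup-norm must be made small uniformly in $j$, which is why a purely gradient-based bad set does not suffice and the extra level set $\{M(\vec{v^n})>\eta_n\}$, with its $j$-independent but $n$-dependent threshold, has to be built in. Once the construction delivers the four local properties ($\vec{v_\lambda^n}\in W_0^{1,\infty}$, the inclusion of the modification set, the gradient bound $c\lambda$, the sup-bound $c\eta_n$), the rest --- the dyadic pigeonhole, the layer-cake inequality, and passing to $\limsup_{n\to\infty}$ --- is essentially bookkeeping.
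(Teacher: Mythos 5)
The paper does not prove Theorem~\ref{thm:lip_trunc} at all --- it is quoted verbatim from \cite{fms2}, \cite{dms}, \cite{Cetraro} --- and your reconstruction is precisely the argument of those references: maximal-function bad set combining $\{M(\nabla\vec{v^n})>\lambda\}$ with an $n$-dependent, $j$-independent level set of $M(\vec{v^n})$ to control the sup-norm, Whitney extension for the Lipschitz bound, and the dyadic pigeonhole over the block $k\in\{2^j,\dots,2^{j+1}-1\}$ to select $\lambda_j^n$ with the $2^{-j}$ decay. The outline and all the quantitative bookkeeping (layer-cake bound, Chebyshev on the second part of the bad set, the $\limsup$ in $n$, and the weak/weak-$*$ convergences) are correct; the only part you black-box is the Whitney construction itself, and within it the one point deserving explicit mention is that $\vec{v_j^n}$ retains \emph{zero boundary values} --- this is where the Lipschitz regularity of $\del\Omega$ is actually used in \cite{dms} and \cite{Cetraro}, since on Whitney cubes near $\del\Omega$ the cube means of $\vec{v^n}$ need not vanish and one must exploit that $\vec{v^n}\equiv\vec 0$ on a fat exterior portion of the comparable balls.
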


The following lemma shows how Lipschitz truncation can be used to get
a connection between weak and almost everywhere convergence. Statement
and proof are close to \cite[Lemma 2.6]{dms}, only the
assumptions on the operator $\S$ have been reduced for the reasons
discussed in the previous Subsection~\ref{sub:extra_stress}.

\begin{lem}[Almost everywhere convergence for the Lipschitz truncation
	method] \label{lem:ae_equality2} Let $\Omega$ be a bounded domain,
	$p \in (1, \infty)$,
	$(\vec{u^n})_{n \in \N} \subset W^{1,p}(\Omega)$ be a weakly
	convergent sequence with limit $\vec{u} \in W^{1,p}(\Omega)$.  Let
	$\A\colon \R_{\text{sym}}^{d\times d}\to \R_{\text{sym}}^{d \times
		d}$ be a locally uniformly monotone operator on $\R_{\text{sym}}^{d\times d}$ such that the induced
	operator $\vec{w} \mapsto \A(\vec{Dw})$ is well-defined and
	bounded in $W^{1,p}(\Omega) \to L^{p'}(\Omega)$.
	
	Now let $B$ be a ball with $2B \subset \subset \Omega$ and
	$\xi \in C_0^\infty (\Omega)$ be a cutoff function such that 
	$\chi_B \leq \xi \leq \chi_{2B}$. We set
	$\vec{v^n} := \xi (\vec{u^n}- \vec{u})$ and let $\vec{v_j^n}$ be the
	Lipschitz truncation of $\vec{v^n}$ with respect to the domain $2B$
	as described in Theorem \ref{thm:lip_trunc}. If we have
	\begin{equation} \label{eq:ae_equality_assum}
	\lim_{n \to \infty} \abs{\fp{\A(\vec{Du^n}) -
			\A(\vec{Du})}{\vec{Dv_j^n}}} \leq \delta_j 
	\end{equation}
	for all $j \in \N$ and some sequence $(\delta_j)_{j \in \N}$ with
	$\lim_{j \to \infty} \delta_j = 0$, then a subsequence of
	$\vec{Du^n}$ converges to $\vec{Du}$ almost everywhere in $B$.
\end{lem}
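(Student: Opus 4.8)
The plan is to exploit the locally uniform monotonicity of $\A$ on the set where the Lipschitz truncation agrees with $\vec{v^n}$, and then pass to a subsequence. First I would introduce, for fixed $j, n \in \N$, the set $E_j^n := \{ \vec{v_j^n} = \vec{v^n} \}$ and observe that on $B \cap E_j^n$ we have $\vec{Dv_j^n} = \vec{Dv^n} = \xi \vec{D(u^n-u)} + \vec{(u^n-u)} \otimes \nabla \xi$ in the symmetric sense; since $\xi \equiv 1$ on $B$, the gradient of $\xi$ vanishes there, so $\vec{Dv_j^n} = \vec{Du^n} - \vec{Du}$ on $B \cap E_j^n$. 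Hence on $B$ we can write
\begin{equation*}
	\fp{\A(\vec{Du^n}) - \A(\vec{Du})}{\vec{Dv_j^n}}
	\geq \int_{B \cap E_j^n} \big( \A(\vec{Du^n}) - \A(\vec{Du}) \big) \cdot \big( \vec{Du^n} - \vec{Du} \big) \, dx
	\; + \; R_j^n,
\end{equation*}
where $R_j^n$ collects the contribution of $2B \setminus (B \cap E_j^n)$ together with the cutoff error terms; the integrand of the first term is nonnegative by monotonicity of $\A$ (take $\vec{A} = \vec{Du^n(x)}$, $\vec{B} = \vec{Du(x)}$ in Definition \ref{defi:uni_monotone} applied pointwise, or rather use that $\A$ is monotone, which follows from local uniform monotonicity with the choice $y = x$).

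Next I would control the remainder $R_j^n$. It splits into: (i) the integral of $(\A(\vec{Du^n}) - \A(\vec{Du})) \cdot \vec{Dv_j^n}$ over $\{\vec{v_j^n} \neq \vec{v^n}\}$, which by Hölder is bounded by $(\norm{\A(\vec{Du^n})}_{p'} + \norm{\A(\vec{Du})}_{p'}) \norm{\nabla \vec{v_j^n}\, \chi_{\{\vec{v_j^n}\neq\vec{v^n}\}}}_p$; the $L^{p'}$-norms are uniformly bounded by the boundedness hypothesis on $\A$ and weak convergence of $\vec{u^n}$, and the truncation factor tends (in $\limsup_n$) to something of size $c\, 2^{-j/p}$ by \eqref{eq:lip_trunc1}. (ii) The cutoff error $\int_{2B \setminus B} (\A(\vec{Du^n}) - \A(\vec{Du})) \cdot \big( \xi \vec{D(u^n-u)} + \vec{(u^n-u)}\otimes\nabla\xi \big)\, dx$ on the overlap region; here one uses $\vec{u^n} \to \vec{u}$ strongly in $L^p(2B)$ (Rellich) to kill the $\nabla\xi$ term, and for the $\xi \vec{D(u^n-u)}$ term one uses that $\A(\vec{Du^n}) \weakto \vec{\chi}$ for some $\vec{\chi} \in L^{p'}$ (after a subsequence) together with... actually this term does not obviously vanish, so instead I would absorb it differently: test the weak formulation, or more cleanly, note that $\vec{Dv^n} \weakto \vec{0}$ in $L^p$ and $\A(\vec{Du^n}) - \A(\vec{Du})$ is bounded in $L^{p'}$, but a product of weakly convergent sequences need not converge, so the right move is to keep the full $2B$ integral $\fp{\A(\vec{Du^n}) - \A(\vec{Du})}{\vec{Dv_j^n}}$ (which is what hypothesis \eqref{eq:ae_equality_assum} bounds) and only split off $\{\vec{v_j^n}\neq\vec{v^n}\}$, using $\vec{Dv_j^n} = \vec{Dv^n}$ on the agreement set and then $\fp{\A(\vec{Du^n}) - \A(\vec{Du})}{\vec{Dv^n}} \to 0$ by the argument that $\vec{Dv^n} = \xi(\vec{Du^n}-\vec{Du}) + \text{(strongly vanishing)}$ and $\xi \A(\vec{Du^n}) \weakto \xi\vec{\chi}$ while... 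Let me restructure.

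Cleaner approach: Combining the above, for each fixed $j$,
\begin{equation*}
	\limsup_{n\to\infty} \int_{B \cap E_j^n} \big( \A(\vec{Du^n}) - \A(\vec{Du}) \big) \cdot \big( \vec{Du^n} - \vec{Du} \big)\, dx \; \leq \; \delta_j + c\, 2^{-j/p},
\end{equation*}
using \eqref{eq:ae_equality_assum} and the Hölder estimate on $\{\vec{v_j^n}\neq\vec{v^n}\}$ together with \eqref{eq:lip_trunc1}. Now set $\theta^n := \big( \A(\vec{Du^n}) - \A(\vec{Du}) \big) \cdot \big( \vec{Du^n} - \vec{Du} \big) \geq 0$ on $B$, and for $0 < \alpha < 1$ estimate by the reverse Hölder trick:
\begin{equation*}
	\int_B (\theta^n)^\alpha\, dx
	= \int_{B\cap E_j^n} (\theta^n)^\alpha\, dx + \int_{B\setminus E_j^n} (\theta^n)^\alpha\, dx
	\leq \abs{B}^{1-\alpha}\Big(\int_{B\cap E_j^n}\theta^n\,dx\Big)^\alpha + \abs{B\setminus E_j^n}^{1-\alpha}\Big(\int_B\theta^n\,dx\Big)^\alpha,
\end{equation*}
where $\int_B \theta^n\, dx$ is uniformly bounded (boundedness of $\A$ and weak convergence) and $\limsup_n \abs{B\setminus E_j^n} = \limsup_n \abs{\{\vec{v_j^n}\neq\vec{v^n}\}} \leq c\, 2^{-j}(\lambda_j^n)^{-p} \to 0$ even without the $\lambda$ factor since $\lambda_j^n \geq 2^{2^j}$; actually $\limsup_n \abs{\{\vec{v_j^n}\neq\vec{v^n}\}} \leq c\, 2^{-j} 2^{-p 2^j}$. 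Taking $\limsup_n$ then $j\to\infty$ gives $\int_B (\theta^n)^\alpha \to 0$, hence $\theta^n \to 0$ in $L^\alpha(B)$, so a subsequence satisfies $\theta^n \to 0$ a.e. in $B$. Finally, at a.e.\ point $x$ where $\theta^n(x)\to 0$ and $\vec{Du^n(x)}$ stays bounded (which holds a.e.\ along a further subsequence since $\vec{Du^n}$ is bounded in $L^p$, hence bounded a.e.\ along a subsequence by passing to an a.e.-convergent majorant — or more carefully, one argues by contradiction using that $\rho_{\vec{Du(x)}}$ is strictly increasing with $\rho_{\vec{Du(x)}}(0)=0$), local uniform monotonicity forces $\rho_{\vec{Du(x)}}(\abs{\vec{Du^n(x)} - \vec{Du(x)}}) \leq \theta^n(x) \to 0$, whence $\vec{Du^n(x)} \to \vec{Du(x)}$.

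The main obstacle I anticipate is the passage from $\theta^n \to 0$ in $L^\alpha(B)$ (equivalently a.e.) to $\vec{Du^n} \to \vec{Du}$ a.e.: one must rule out the degenerate possibility that $\abs{\vec{Du^n(x)}} \to \infty$ along the pointwise subsequence, for which $(\delta + \abs{\vec{Du(x)}+\text{shift}} + \abs{\vec{Du^n(x)}-\vec{Du(x)}})^{p-2}$ decays and could in principle keep $\theta^n(x)$ small; this is handled by noting $\vec{Du^n} \to \vec{Du}$ a.e.\ on a subsequence would follow if we also knew an a.e.-bound, so instead one invokes that $\vec{Du^n}$, being bounded in $L^p(B)$, has a subsequence for which $\sup_n \abs{\vec{Du^n}} \in L^p_{loc}$ fails in general — the correct fix is the standard one (see \cite{dms, Cetraro}): work with $\abs{\vec{Du^n}-\vec{Du}} \wedge m$ for $m\in\N$, show $\theta^n$ controls $\rho_{\vec{Du}}$ of the truncated difference, conclude a.e.\ convergence of the truncated differences for every $m$, and then let $m\to\infty$. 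The second, more technical obstacle is the careful bookkeeping of the cutoff $\xi$: making sure that on $B$ the identity $\vec{Dv_j^n} = \vec{Du^n}-\vec{Du}$ on the agreement set is used, while on the collar $2B\setminus B$ the hypothesis \eqref{eq:ae_equality_assum} is what absorbs the extra terms — this is exactly why the assumption is stated with $\vec{Dv_j^n}$ rather than with $\vec{D(u^n-u)}$ directly.
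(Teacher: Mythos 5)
Your proposal follows the same route as the paper's proof, which cites \cite[Lemma 2.6]{dms} for the $L^1(B)$-convergence of the $\theta$-power of the monotonicity integrand and then applies the locally uniform monotonicity inequality pointwise. You essentially fill in the cited lemma's argument (split off the Lipschitz-disagreement set, reverse H\"older trick, a.e.\ convergent subsequence) and your key inequality
\begin{equation*}
\limsup_{n\to\infty}\int_{B\cap E_j^n}\big(\A(\vec{Du^n})-\A(\vec{Du})\big)\cdot\big(\vec{Du^n}-\vec{Du}\big)\,dx \le \delta_j + c\,2^{-j/p}
\end{equation*}
is correct. However, the two ``obstacles'' you flag at the end are not genuine difficulties in this framework, and you should not leave them unresolved.

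First, the collar contribution on $2B\setminus B$ need not vanish and you should not try to show that it does. On the agreement set $E_j^n$ one has $\vec{Dv_j^n} = \xi\,\vec{D}(\vec{u^n}-\vec{u}) + \operatorname{sym}\!\big((\vec{u^n}-\vec{u})\otimes\nabla\xi\big)$; the second summand dies by Rellich as you note, while the first satisfies $\xi\,(\A(\vec{Du^n})-\A(\vec{Du}))\cdot\vec{D}(\vec{u^n}-\vec{u}) \ge \chi_B\,(\A(\vec{Du^n})-\A(\vec{Du}))\cdot\vec{D}(\vec{u^n}-\vec{u})$ pointwise, because the integrand is nonnegative (monotonicity of $\A$) and $\chi_B \le \xi$. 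So the collar piece is simply dropped in the lower bound; nothing needs to converge there.

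Second, the ``degenerate'' scenario with $\abs{\vec{Du^n}(x)}\to\infty$ while $\theta^n(x)\to0$ cannot occur, and the detour through $\abs{\vec{Du^n}-\vec{Du}}\wedge m$ is unnecessary. By Definition~\ref{defi:uni_monotone}, $\rho_x$ is \emph{strictly} monotonically increasing with $\rho_x(0)=0$. If $\rho_x(t_n)\to0$ but $t_n\not\to0$, a subsequence $t_{n_k}\ge\varepsilon>0$ would give $\rho_x(t_{n_k})\ge\rho_x(\varepsilon)>0$, a contradiction. So $\theta^n(x)\to0$ a.e.\ directly forces $\abs{\vec{Du^n}(x)-\vec{Du}(x)}\to0$ a.e.\ — this automatic rescue is precisely what the ``locally uniform monotonicity'' abstraction buys, and it is exactly what the paper's proof invokes without any truncation device.
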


\begin{rem}\label{rem:S}
	In Lemma \ref{lem:uni_monotonicity} and lemma \ref{lem:properties_S}
	we have seen that for an extra stress tensor
	$\S\colon \R_{\text{sym}}^{d\times d}\to \R_{\text{sym}}^{d\times d}$
	with $p$-$\delta$-structure and a given vector field
	$\vec {g} \in W^{1,p}(\Omega)$ the operator
	$\A(\vec{B}):=\S(\vec{B} + \vec{Dg})$ fulfils the requirements in
	Lemma~\ref{lem:ae_equality2}.
\end{rem}

\begin{proof}[Proof of Lemma \ref{lem:ae_equality2}]
	Let $\theta \in (0, 1)$. Making use of the properties of $\A$, we
	obtain strong convergence
	$\left[(\A(\vec{Du^n})-\A(\vec{Du}))\cdot
	(\vec{Du^n}-\vec{Du})\right] ^\theta \to 0$ in $L^1(B)$ as
	$n \to \infty$ along the lines of \cite[Lemma 2.6]{dms}. We switch to a
	subsequence that converges almost everywhere.
	By the definition of locally uniform monotonicity, there exists a
	strictly monotonically increasing function
	$\rho_x\colon [0, \infty) \to [0, \infty)$ with
	\begin{align*}
		&(\A(\vec{Du^n}(x))-\A(\vec{Du}(x)))\cdot
		(\vec{Du^n}(x)-\vec{Du}(x))
		\geq \rho_x   (\abs{\vec{Du^n}(x)-\vec{Du}(x)}) 
	\end{align*}
	for all $n\in\N$ and almost every $x\in B$ ($\rho_x$~depends on
	$\vec{Du}(x)$). Utilizing the almost everywhere convergence of the
	left-hand side and the non-negativity of the right-hand side, we
	obtain a subsequence that fulfils
	$\rho_x (\abs{\vec{Du^n}(x)-\vec{Du}(x)}) \to 0$ almost
	everywhere. Thus, it holds $\vec{Du^n}(x) \to \vec{Du}(x)$ as
	$n\to\infty$ for this subsequence and almost every~$x\in B$.	
\end{proof}

By applying a covering argument and taking the diagonal sequence we
obtain a global version of Lemma \ref{lem:ae_equality2}
(cf.~\cite[Cor. 3.32]{Cetraro}):

\begin{cor}\label{cor:ae_equality3}
	Assume that the assumptions of Lemma \ref{lem:ae_equality2} are fulfilled
	for all balls $B$ with $2B \subset \subset \Omega$ (with sequences
	$\delta_j$ that may depend on the ball $B$). Then $\vec{Du^n}$
	converges to $\vec{Du}$ almost everywhere on $\Omega$ for a suitable
	subsequence.
\end{cor}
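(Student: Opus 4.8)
The plan is to localise: reduce the global a.e.\ convergence to the local statement of Lemma~\ref{lem:ae_equality2} by covering $\Omega$ with countably many admissible balls, and then extract a single subsequence that works for all of them by a diagonal argument.

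First I would exhibit a suitable countable family of balls. For $x \in \Omega$ put $r_x := \tfrac13 \operatorname{dist}(x, \partial \Omega)$, which is positive since $\Omega$ is bounded, hence $\partial\Omega \neq \emptyset$. Then the closure $\overline{B(x, 2r_x)}$ of the double ball $2B(x, r_x)$ is a compact subset of the open set $\Omega$ (every $y$ with $\abs{y-x}\le 2r_x$ has $\operatorname{dist}(y,\partial\Omega) \ge \operatorname{dist}(x,\partial\Omega) - 2r_x > 0$), so $2 B(x, r_x) \subset\subset \Omega$. Shrinking slightly one finds a ball with rational centre and rational radius that still contains $x$ and whose double is still compactly contained in $\Omega$; letting $(B_k)_{k \in \N}$ enumerate all such rational balls, one gets $2 B_k \subset\subset \Omega$ for every $k$ and $\bigcup_{k \in \N} B_k = \Omega$.

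Next I would iterate Lemma~\ref{lem:ae_equality2}. By hypothesis its assumptions hold for every $B_k$ (with some null-sequence $\delta_j^{(k)}$). Starting from the full sequence, the lemma applied with $B = B_1$ produces an infinite index set $I_1 \subset \N$ along which $\vec{Du^n} \to \vec{Du}$ a.e.\ in $B_1$. The assumptions of the lemma for $B_2$ are inherited by the subsequence $(\vec{u^n})_{n \in I_1}$: the weak convergence $\vec{u^n} \weakto \vec{u}$ survives passing to a subsequence, the operator $\A$ and its induced operator are unchanged, and the limit condition \eqref{eq:ae_equality_assum} — a genuine $\lim_{n\to\infty}$, whereas the Lipschitz-truncation estimates of Theorem~\ref{thm:lip_trunc} entering it are $\limsup_{n\to\infty}$ statements and hence only improve under restriction — carries over. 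Thus the lemma applied to $(\vec{u^n})_{n\in I_1}$ with $B = B_2$ yields an infinite $I_2 \subset I_1$ with a.e.\ convergence in $B_2$. Iterating gives nested infinite sets $I_1 \supset I_2 \supset \cdots$ such that along $I_k$ one has $\vec{Du^n} \to \vec{Du}$ a.e.\ in $B_k$.

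Finally I would pass to the diagonal subsequence: choose the $k$-th index $n_k$ to be the $k$-th element of $I_k$, arranged strictly increasing. From term $k$ onwards $(n_j)_{j\ge k}$ is a subsequence of $I_k$, so $\vec{Du^{n_j}} \to \vec{Du}$ a.e.\ in $B_k$ for every $k$. Since a countable union of Lebesgue-null sets is null and $\bigcup_k B_k = \Omega$, the diagonal subsequence satisfies $\vec{Du^{n_j}} \to \vec{Du}$ almost everywhere in $\Omega$, which is the assertion. The only point that really needs care is the bookkeeping in the previous paragraph — that all hypotheses of Lemma~\ref{lem:ae_equality2}, in particular \eqref{eq:ae_equality_assum} together with the defining properties of the Lipschitz truncation, are stable under passing to subsequences, so that the lemma may legitimately be re-applied at every step of the induction.
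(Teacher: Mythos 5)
Your argument is correct and is exactly the covering-plus-diagonal-subsequence argument the paper invokes (with reference to \cite{Cetraro}): a countable family of rational balls whose doubles are compactly contained in $\Omega$, iterated application of Lemma~\ref{lem:ae_equality2} along nested subsequences, and the diagonal sequence together with the fact that a countable union of null sets is null. Your explicit check that the hypotheses — in particular the limit condition \eqref{eq:ae_equality_assum} and the $\limsup$-type Lipschitz truncation estimates — are stable under passing to subsequences is precisely the bookkeeping the paper leaves implicit.
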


Using the almost everywhere convergence established in Corollary
\ref{cor:ae_equality3}, we may prove a general statement about the
limit process with the Lipschitz truncation method in existence
proofs:

\begin{thm}[Identification of limits using the Lipschitz truncation
	method] \label{thm:ident_limits} Let $\Omega \subset \Rd$ be a
	bounded domain, $p \in (1, \infty)$ and $\vec{g}\in W^{1,p}(\Omega)$
	be given. Let
	$\A\colon \R_{\text{sym}}^{d\times d}\to \R_{\text{sym}}^{d \times
		d}$ be a continuous, locally uniformly monotone operator on
	$\R_{\text{sym}}^{d\times d}$ such that the induced operator
	$\vec{w} \mapsto \A(\vec{Dw})$ is well-defined and bounded
	in $W^{1,p}(\Omega) \to L^{p'}(\Omega)$. Let there be an operator
	$\vec{B} \colon V_p \to V_s\d$ for some $s \in [p, \infty)$ and a
	space $X$ such that $X \injto V_p$ embeds continuously and such that
	$\vec{B}$ is well-defined as an operator $X \to X\d$.  Assume we
	have a sequence of operators $\vec{A_n}\colon X \to X\d$ and
	solutions $\vec{u^n} \in X$ to
	\begin{equation} \label{eq:lem_ident_appr}
	\fp{\A(\vec{Du^n})}{\vec{D\phi}} +
	\fp{\vec{B}(\vec{u^n})}{\vec{\phi}} +
	\fp{\vec{A_n}(\vec{u^n})}{\vec{\phi}} = 0
	\end{equation}
	with test functions $\vec{\phi} \in X\!$.
	
	In addition, assume that for some $r \in (1, \infty)$ the embedding
	$V_r \injto X$ is   continuous and dense, that $\vec{B}$ is strongly
	continuous as an operator $V_p \to W_0^{1,r}(\Omega)\d$ and that we have
	convergences
	\begin{align} \label{eq:ident_assum}
		\begin{split}
			\vec{u^n} \weakto \vec{u} &\quad \text{weakly in } V_p,\\
			\vec{A_n}(\vec{u^n}) \to \vec{0}  &\quad \text{strongly in } W_0^{1,r}(\Omega)\d
		\end{split}
	\end{align}
	as $n \to \infty$.
	
	Then $\vec{u}$ is a solution of the limit equation
	\begin{equation} \label{eq:lem_ident_limit}
	\fp{\A(\vec{Du})}{\vec{D\phi}} + \fp{\vec{B}(\vec{u})}{\vec{\phi}} = 0
	\end{equation}
	for all $\vec{\phi} \in V_s$.
\end{thm}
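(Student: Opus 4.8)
The plan is to extract a weak limit $\vec{\chi}\in L^{p'}(\Omega)$ of $\A(\vec{Du^n})$, to identify it with $\A(\vec{Du})$ by means of Corollary~\ref{cor:ae_equality3}, and then to pass to the limit in~\eqref{eq:lem_ident_appr}. Since the induced operator is bounded $W^{1,p}(\Omega)\to L^{p'}(\Omega)$ and $\vec{u^n}\weakto\vec{u}$ in $V_p$, the sequence $\A(\vec{Du^n})$ is bounded in $L^{p'}(\Omega)$, so along a (not relabelled) subsequence $\A(\vec{Du^n})\weakto\vec{\chi}$ in $L^{p'}(\Omega)$. Testing~\eqref{eq:lem_ident_appr} with $\vec{\phi}\in V_r\injto X$ and letting $n\to\infty$ — the first term converges because $\vec{D\phi}\in L^p(\Omega)=(L^{p'}(\Omega))\d$, the second by strong continuity of $\vec{B}$ from $V_p$ to $W_0^{1,r}(\Omega)\d$, the third by~\eqref{eq:ident_assum} — yields $\fp{\vec{\chi}}{\vec{D\phi}}+\fp{\vec{B}(\vec{u})}{\vec{\phi}}=0$ for all $\vec{\phi}\in V_r$. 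The left-hand side is a continuous linear functional on $V_s$ (note $\vec{\chi}\in L^{p'}(\Omega)$ and $\norm{\vec{D\phi}}_p\le C\norm{\vec{D\phi}}_s$ since $s\ge p$, while $\vec{B}(\vec{u})\in V_s\d$), and the smooth compactly supported solenoidal functions, which are contained in $V_r$ and dense in $V_s$, make this identity valid for all $\vec{\phi}\in V_s$. Hence it remains to prove $\vec{\chi}=\A(\vec{Du})$; as $\A$ is continuous and $\A(\vec{Du^n})$ is bounded in $L^{p'}(\Omega)$, it suffices to show that a subsequence of $\vec{Du^n}$ converges to $\vec{Du}$ almost everywhere on $\Omega$.

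To obtain this I would invoke Corollary~\ref{cor:ae_equality3}: for a ball $B$ with $2B\subset\subset\Omega$ and a cutoff $\xi$ with $\chi_B\le\xi\le\chi_{2B}$, set $\vec{v^n}:=\xi(\vec{u^n}-\vec{u})$ (so that $\vec{v^n}\weakto\vec{0}$ in $W_0^{1,p}(2B)$) and let $\vec{v_j^n}$ be the Lipschitz truncation of $\vec{v^n}$ with respect to $2B$ from Theorem~\ref{thm:lip_trunc}. One has to verify the hypothesis~\eqref{eq:ae_equality_assum}. Writing $\fp{\A(\vec{Du^n})-\A(\vec{Du})}{\vec{Dv_j^n}}=\fp{\A(\vec{Du^n})}{\vec{Dv_j^n}}-\fp{\A(\vec{Du})}{\vec{Dv_j^n}}$, the last term tends to $0$ as $n\to\infty$ for fixed $j$, because $\vec{Dv_j^n}\weakto\vec{0}$ in $L^p(\Omega)$ by~\eqref{eq:lip_trunc2} while $\A(\vec{Du})\in L^{p'}(\Omega)$. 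The remaining term $\fp{\A(\vec{Du^n})}{\vec{Dv_j^n}}$ is the hard one, since $\vec{v_j^n}$ is not solenoidal and therefore not admissible in~\eqref{eq:lem_ident_appr}.

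To handle it I would correct $\vec{v_j^n}$ by the Bogovski\u{\i} operator on $2B$ (Theorem~\ref{thm:bogovski}), setting $\vec{w_j^n}:=\vec{v_j^n}-\Bog_{2B}(\div\vec{v_j^n})$; since $\div\vec{v_j^n}\in L^\infty(2B)$ has mean value zero, $\vec{w_j^n}$ is solenoidal, compactly supported in $2B$ and lies in $V_q$ for every $q<\infty$, in particular in $V_r\injto X$, so it is admissible and
\[
\fp{\A(\vec{Du^n})}{\vec{Dw_j^n}}=-\fp{\vec{B}(\vec{u^n})}{\vec{w_j^n}}-\fp{\vec{A_n}(\vec{u^n})}{\vec{w_j^n}}.
\]
For fixed $j$ the $\lambda_j^n$ are bounded, so $\vec{w_j^n}$ is bounded in $W_0^{1,r}(\Omega)$, and since $\div\vec{v_j^n}=\tr(\nabla\vec{v_j^n})\weakto 0$ in $L^r(2B)$ by~\eqref{eq:lip_trunc2} we get $\Bog_{2B}(\div\vec{v_j^n})\weakto\vec{0}$, hence $\vec{w_j^n}\weakto\vec{0}$ in $W_0^{1,r}(\Omega)$; combined with $\vec{A_n}(\vec{u^n})\to\vec{0}$ and $\vec{B}(\vec{u^n})\to\vec{B}(\vec{u})$ strongly in $W_0^{1,r}(\Omega)\d$ (strong continuity and~\eqref{eq:ident_assum}), the right-hand side tends to $0$ as $n\to\infty$. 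Finally $\fp{\A(\vec{Du^n})}{\vec{Dv_j^n}}-\fp{\A(\vec{Du^n})}{\vec{Dw_j^n}}=\fp{\A(\vec{Du^n})}{\vec{D}\Bog_{2B}(\div\vec{v_j^n})}$ is bounded by $C\norm{\div\vec{v_j^n}}_p$ (using the $L^{p'}$-bound on $\A(\vec{Du^n})$ and boundedness of $\Bog_{2B}$), and since $\div\vec{v_j^n}=\nabla\xi\cdot(\vec{u^n}-\vec{u})$ on $\{\vec{v_j^n}=\vec{v^n}\}$ (because $\vec{u^n}$ and $\vec{u}$ are solenoidal), with $\vec{u^n}\to\vec{u}$ in $L^p(\Omega)$ by compact embedding, while $\abs{\div\vec{v_j^n}}\le c\,\abs{\nabla\vec{v_j^n}\,\chi_{\{\vec{v_j^n}\neq\vec{v^n}\}}}$ on the complement, the estimates~\eqref{eq:lip_trunc1} give $\limsup_{n\to\infty}\norm{\div\vec{v_j^n}}_p\le c\,2^{-j/p}$. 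Altogether $\limsup_{n\to\infty}\abs{\fp{\A(\vec{Du^n})-\A(\vec{Du})}{\vec{Dv_j^n}}}\le\delta_j$ with $\delta_j:=c\,2^{-j/p}\to 0$, which is~\eqref{eq:ae_equality_assum}. As $B$ was arbitrary, Corollary~\ref{cor:ae_equality3} furnishes a subsequence with $\vec{Du^n}\to\vec{Du}$ almost everywhere; along it $\A(\vec{Du^n})\to\A(\vec{Du})$ a.e.\ by continuity, and being $L^{p'}$-bounded it converges weakly to $\A(\vec{Du})$ in $L^{p'}(\Omega)$, so $\vec{\chi}=\A(\vec{Du})$, which completes the proof.

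I expect the main obstacle to be precisely this verification of~\eqref{eq:ae_equality_assum}: the Lipschitz truncation $\vec{v_j^n}$ is not divergence-free, so it must be corrected by the Bogovski\u{\i} operator before it can be used in the approximate equation, and one has to show that this correction, together with the contributions coming from the set $\{\vec{v_j^n}\neq\vec{v^n}\}$, is negligible — but only in the ordered double limit (first $n\to\infty$ for fixed $j$, then $j\to\infty$) and only after localizing to balls $2B\subset\subset\Omega$, so that the covering argument behind Corollary~\ref{cor:ae_equality3} can be applied.
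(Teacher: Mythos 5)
Your proof is correct and takes essentially the same route as the paper: localize by a cutoff, Lipschitz-truncate $\xi(\vec{u^n}-\vec{u})$, correct with Bogovski\u{\i} to obtain admissible test functions, use the approximate equation together with the strong continuity of $\vec{B}$ and the decay of $\vec{A_n}(\vec{u^n})$ to verify the hypothesis of Corollary~\ref{cor:ae_equality3}, and conclude a.e.\ convergence of $\vec{Du^n}$ and hence the identification $\vec{\chi}=\A(\vec{Du})$. The only (harmless) difference in presentation is the order: you first pass to the limit in the equation with the abstract weak limit $\vec{\chi}$ and identify it afterwards, while the paper identifies $\A(\vec{Du^n})\weakto\A(\vec{Du})$ first and then passes to the limit; the supporting estimates, in particular the decomposition via $\vec{w_j^n}=\vec{v_j^n}-\Bog(\div\vec{v_j^n})$ and the bound $\limsup_n\norm{\div\vec{v_j^n}}_p\le c\,2^{-j/p}$, match the paper's $\vec{\eta_j^n}$, $\vec{\psi_j^n}$ exactly.
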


\begin{rem}
	The operator $\A$ represents a (possibly shifted) extra stress
	tensor (cf.~Remark \ref{rem:S}) and
	$\vec{B}$ may be chosen as the convective term. Typical choices for
	the space $X$ are $X = V_q$ or $X = V_p \cap L^q(\Omega)$ with
	coercive operators
	$\fp{\vec{A_n}(\vec{v})}{\vec{\phi}} = \fp{\abs{ \vec{Dv}}^{q-2}
		\vec{Dv}}{\vec{D\phi}}$ and
	$\fp{\vec{A_n}(\vec{v})}{\vec{\phi}} = \fp{\abs{\vec{v}}^{p-2}
		\vec{v}}{\vec{\phi}}$ respectively.
	
	The inclusions $X\injto V_p$, $V_s \injto V_p$ and $V_r \injto X$
	guarantee the well-definedness of $\vec{A_n}$ and of the operator
	which is induced by $\A$.
\end{rem}

\begin{proof}[Proof of Theorem \ref{thm:ident_limits}]
	The proof of Theorem \ref{thm:ident_limits} follows and generalizes
	the procedure in \cite{dms}, \cite{Cetraro}. First, we check the assumptions of
	Lemma \ref{lem:ae_equality2}/ Corollary \ref{cor:ae_equality3} in
	order to obtain almost everywhere convergence
	$\vec{Du^n} \to \vec{Du}$, then we use this to prove
	\eqref{eq:lem_ident_limit}.
	
	As in Lemma \ref{lem:ae_equality2} we let $B$ be a ball with
	$2B \subset \subset \Omega$ and $\xi \in C_0^\infty (\Omega)$ be a
	cutoff function such that $\chi_B \leq \xi \leq \chi_{2B}$. We set
	$\vec{v^n} := \xi (\vec{u^n}- \vec{u})$ and let $\vec{v_j^n}$ be the
	Lipschitz truncation of $\vec{v^n}$ with respect to the domain $2B$
	from Theorem \ref{thm:lip_trunc}. Since the functions $\vec{v_j^n}$
	are in general not divergence-free, we have to introduce correction
	terms in order to use them as test functions in
	\eqref{eq:lem_ident_appr}. We use the Bogovski\u{\i} operator
	$\Bog \colon L_0^r(\Omega) \to W_0^{1,r}(\Omega)$ and set
	\begin{equation} \label{eq:defi_eta}
	\vec{\psi_j^n} := \Bog(\div \vec{v_j^n}) \in W_0^{1,r}(\Omega)
	\quad\text{and}\quad \vec{\eta_j^n} := \vec{v_j^n} -
	\vec{\psi_j^n} \in V_r. 
	\end{equation}
	By \eqref{eq:lip_trunc2}$_1$, we get
	$\nabla \vec{v_j^n} \weakto \vec{0}$ in $L^r(\Omega)$ for each
	$j \in \N$ as $n\to\infty$. Since both the divergence and the
	Bogovski\u{\i} operator are linear and continuous, we get the
	convergence
	\begin{align} \label{eq:conv_psi1}
		\vec{\psi_j^n} \weakto \vec{0} &\quad \text{ weakly in } W_0^{1,r}(\Omega)
	\end{align}
	as $n \to \infty$ for every $j \in \N$.  By a well-known fact, we
	know $\nabla \vec{v_j^n} = \nabla \vec{v^n}$ on the set
	$\{\vec{v_j^n} = \vec{v^n}\}$ (cf.~\cite{Maly}). Thus, we obtain
	$\div \vec{v^n} = \nabla \xi \cdot (\vec{u^n}-\vec{u})$ by the
	product rule and get
	$\div\vec{v_j^n} = \chi_{\{\vec{v^n}\neq\vec{v_j^n} \}} \div
	\vec{v_j^n} + \chi_{\{\vec{v^n}=\vec{v_j^n} \}} \nabla \xi \cdot
	(\vec{u^n}-\vec{u})$. Together with the continuity of the
	Bogovski\u{\i} operator and the $W^{1, \infty}(\Omega)$-boundedness
	of the cutoff function~$\xi$, this implies
	\begin{equation} \label{eq:estim_psi1}
	\norm{\vec{\psi_j^n}}_{1,p}
	\leq c \norm{\div \vec{v_j^n}}_p \leq c
	\big\Vert\chi_{\{\vec{v^n}\neq\vec{v_j^n} \}} \nabla \vec{v_j^n}
	\big\Vert_p + c\,(\xi) \norm{\vec{u^n}-\vec{u}}_p\!.
	\end{equation}
	Furthermore, due to the assumption $\vec{u^n} \weakto \vec{u}$ in
	$W_0^{1,p}(\Omega)$ and the compact embedding
	$W_0^{1,p}(\Omega) \injto \injto L^p(\Omega)$, we have strong
	convergence $\vec{u^n} \to \vec{u}$ in $L^p(\Omega)$. Applying
	\eqref{eq:lip_trunc1}$_4$ and this strong convergence in
	\eqref{eq:estim_psi1}, we obtain
	\begin{equation} \label{eq:estim_psi2}
	\limsup_{n \to \infty} \norm{\vec{\psi_j^n}}_{1,p} \leq c\, 2^{\frac{-j}{p}}
	\end{equation}
	for all $j \in \N$.
	
	From \eqref{eq:lip_trunc2}$_1$, \eqref{eq:conv_psi1} and the compact
	embedding $W_0^{1,r}(\Omega) \injto \injto L^r(\Omega)$, we conclude
	\begin{align} \label{eq:conv_eta1}
		\vec{\eta_j^n} \weakto \vec{0}
		&\quad \text{ weakly in } W_0^{1,r}(\Omega)
	\end{align}
	for all $j \in \N$ as $n \to \infty$.
	
	Since $\vec{B}\colon V_p \to W_0^{1,r}(\Omega)\d$ is strongly
	continuous and $\vec{u^n} \weakto \vec{u}$ in $V_p$, we obtain the
	convergence $\vec{B}(\vec{u^n}) \to \vec{B}(\vec{u})$ in
	$W_0^{1,r}(\Omega)\d$. This and \eqref{eq:conv_eta1} imply
	\begin{equation} \label{eq:lip2}
	\lim_{n \to \infty} \fp{\vec{B}(\vec{u^n})}{\vec{\eta_j^n}} = 0.
	\end{equation}
	Similarly, we obtain
	\begin{equation} \label{eq:lip3}
	\lim_{n \to \infty} \fp{\vec{A_n}(\vec{u^n})}{\vec{\eta_j^n}} = 0
	\end{equation}
	from \eqref{eq:ident_assum}$_2$ and \eqref{eq:conv_eta1}.
	Furthermore, \eqref{eq:lip_trunc2}$_1$ implies
	$\vec{v_j^n} \weakto \vec{0}$ in $W_0^{1,p}(\Omega)$ and
	\begin{equation} \label{eq:lip4}
	\lim_{n \to \infty} \fp{\A(\vec{Du})}{\vec{Dv_j^n}} 
	= 0
	\end{equation}
	for all $j \in \N$.
	
	By \eqref{eq:defi_eta} and equation
	\eqref{eq:lem_ident_appr} we have
	\begin{align*}
		&\langle \A (\vec{Du^n}) - \A(\vec{Du}), \vec{Dv_j^n} \rangle
		\\
		&= - \fp{\vec{B}(\vec{u^n})}{\vec{\eta_j^n}} - \fp{\vec{A_n}(\vec{u^n})}{\vec{\eta_j^n}} 
		+ \fp{\A(\vec{Du^n})}{\vec{D\psi_j^n}} - \fp{\A(\vec{Du})}{\vec{Dv_j^n}}.
	\end{align*}
	We use the convergences \eqref{eq:lip2}, \eqref{eq:lip3},
	\eqref{eq:estim_psi2} and \eqref{eq:lip4} in this identity and obtain
	\begin{equation*}
		\limsup_{n \to \infty} \abs{\fp{\A(\vec{Du^n}) -
				\A(\vec{Du})}{\vec{Dv_j^n}}} \leq c\, 2^{\frac{-j}{p}}\!. 
	\end{equation*}
	Since $2^{\frac{-j}{p}} \to 0$ as $j \to \infty$, we may apply
	Corollary \ref{cor:ae_equality3} and conclude
	$\vec{Du^n} \to \vec{Du}$ almost everywhere in $\Omega$ up to some
	subsequence. By the continuity of $\A$, it follows
	$\A(\vec{Du^n}) \to \A(\vec{Du})$ almost everywhere in $\Omega$.
	
	By assumption, the mapping $\vec{v} \mapsto \A(\vec{Dv})$ defines a
	bounded operator \linebreak$W^{1,p}(\Omega) \to L^{p'}(\Omega)$,
	and thus the sequence 
	$(\A(\vec{Du^n}))_{n \in \N}$ is bounded. We may extract a weakly convergent
	subsequence $\A(\vec{Du^n}) \weakto \vec{\chi}$
	in~$L^{p'}(\Omega)$. The combination of almost everywhere
	convergence $\A(\vec{Du^n}) \to \A(\vec{Du})$ and weak convergence
	$\A(\vec{Du^n}) \weakto \vec{\chi}$ (for some subsequences) implies
	$\A(\vec{Du}) = \vec{\chi}$ by a well-known convergence principle
	(cf.~\cite{Gajewski}); in particular, it follows
	\begin{equation} \label{eq:conv_s}
	\A(\vec{Du^n}) \weakto \A(\vec{Du}) \quad \text{weakly in } L^{p'}(\Omega).
	\end{equation}
	We pass to the limit for $n \to \infty$ in
	\eqref{eq:lem_ident_appr} and use \eqref{eq:conv_s}, the strong
	continuity of $\vec{B}$ and \eqref{eq:ident_assum}$_2$ to obtain
	\begin{align*}
		0 &= \lim_{n \to \infty} \fp{\A(\vec{Du^n})}{\vec{D\phi}} +
		\fp{\vec{B}(\vec{u^n})}{\vec{\phi}} +
		\fp{\vec{A_n}(\vec{u^n})}{\vec{\phi}}
		\\ 
		&= \fp{\A(\vec{Du})}{\vec{D\phi}} + \fp{\vec{B}(\vec{u})}{\vec{\phi}}
	\end{align*}
	for $\vec{\phi} \in V_p \cap V_r \cap X = V_r$ and therefore, by
	density, for all $\vec{\phi} \in V_s$.
\end{proof}

\section{Existence of weak solutions} \label{sec:existence}

\subsection{Smallness condition and main result}
As mentioned in the introduction, our ansatz for proving existence requires smallness of the boundary and the divergence data which is necessary for proving local coercivity. In order to formulate a precise smallness condition, we define the following dependent constants:

For a domain $\Omega$, an extra stress tensor $\S$ with
$p$-$\delta$-structure,  $s = \max \big\{ p, \big (\frac{p\d}{2}\big )' \big\}$, a functional $\vec{f} \in W_0^{1,p}(\Omega)\d$
and a function $\vec{g} \in W^{1,s}(\Omega)$ we define 
\begin{equation} \begin{split} \label{eq:g123} G_1 &:= \tfrac{1}{p} \,
C_3(\S),
\\
G_2 &:= c_\text{Sob} c_\text{Korn}^2 \left[ \norm{\vec{Dg}}_{s} +
\tfrac{1}{2} \norm{\div \vec{g}}_s \right],
\\
G_3 &:= \left( C_2(\S) + C_3(\S) \right)
\bignorm{\abs{\vec{Dg}}+\delta}_p^{p-1} + c_\text{Sob}
\norm{\vec{g}}_{1,s}^2
\\
&\qquad + c_\text{Sob} c_\text{Korn} \norm{\div \vec{g}}_s
\norm{\vec{g}}_{1,s} + c_\text{Korn}
\norm{\vec{f}}_{W_0^{1,p}(\Omega)\d}
\end{split} \end{equation} with constants $c_\text{Korn}$,
$c_\text{Sob}$, $C_2(\S)$ and $C_3(\S)$ that do only depend on
$\Omega$ and the characteristics of~$\S$.

With these constants, we impose a smallness condition on the data
$g_1$ and $\vec{g_2}$:

\begin{assum} \label{assum:smallness}
	We assume that $g_1 \in L^s(\Omega)$ and
	$\vec{g_2} \in W^{s-\frac{1}{s}, s}(\del \Omega)$ satisfy the
	compatibility condition
	$\int_{\Omega} g_1\,dx = \int_{\del \Omega} \vec{g_2} \cdot
	\vec{\nu}\, do$
	and that their norms are so small that a solution
	$\vec{g} \in W^{1,s}(\Omega)$ of the corresponding inhomogeneous
	divergence equation (see Lemma \ref{lem:div_eq}) satisfies
	\begin{equation} \label{eq:g_smallness}
	(2-p)^{2-p} (p-1)^{p-1} G_1 \geq G_2^{p-1}G_3^{2-p}
	\end{equation}
	for the constants $G_1, G_2, G_3$ from \eqref{eq:g123}.
\end{assum}

Under that condition, we are able to prove the following existence result:

\begin{thm}[Existence] \label{thm:main_thm} Let $\Omega \subset \Rd$
	be a bounded Lipschitz domain with $d \in \{2, 3\}$. Let~$\S$ be an
	extra stress tensor with $p$-$\delta$-structure,
	$p \in \big( \frac{2d}{d+2}, 2 \big)$,
	$s := \max \big\{ p, \big (\frac{p\d}{2}\big )' \big\}$ and
	$\vec{f} \in W_0^{1,p}(\Omega)\d$. For any $g_1 \in L^s(\Omega)$ and
	$\vec{g_2} \in W^{s-\frac{1}{s}, s}(\del \Omega)$ that satisfy
	Assumption \ref{assum:smallness}, there exists a weak solution
	$(\vec{v}, \pi) \in W^{1,p}(\Omega) \times L^{s'}(\Omega)$
	of~\eqref{eq:main_problem}.
\end{thm}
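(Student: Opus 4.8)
The plan is to execute the programme announced in the introduction. First I would fix, by Lemma~\ref{lem:div_eq}, a function $\vec{g}\in W^{1,s}(\Omega)$ solving the inhomogeneous divergence equation \eqref{eq:g_problem} (the compatibility condition is part of Assumption~\ref{assum:smallness}), so that the ansatz $\vec{v}=\vec{u}+\vec{g}$ with $\vec{u}\in V_p$ turns \eqref{eq:main_problem} into the homogeneous problem of finding $\vec{u}$ with $\fp{\vec{S}(\vec{u})+\vec{T}(\vec{u})-\vec{f}}{\vec{\phi}}=0$ for divergence-free test functions $\vec{\phi}$, where $\vec{S}$ and $\vec{T}$ are the operators of \eqref{eq:defi_S} and \eqref{eq:defi_T}. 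The decisive ingredient is local coercivity: adding the lower bound of Lemma~\ref{lem:S_estimate}, the estimate \eqref{eq:econv} of Lemma~\ref{lem:t2_str_contin}, and $\abs{\fp{\vec{f}}{\vec{u}}}\le c_\text{Korn}\norm{\vec{f}}_{W_0^{1,p}(\Omega)\d}\norm{\vec{Du}}_p$ yields, with $t:=\norm{\vec{Du}}_p$ and the constants $G_1,G_2,G_3$ from \eqref{eq:g123},
\begin{equation*}
	\fp{\vec{S}(\vec{u})+\vec{T}(\vec{u})-\vec{f}}{\vec{u}}\;\ge\;G_1t^p-G_2t^2-G_3t\;=:\;\phi_0(t)\qquad(\vec{u}\in V_p).
\end{equation*}
Since $p<2$, the function $\phi(t):=\phi_0(t)/t$ is strictly concave on $(0,\infty)$, negative near $0$ and tends to $-\infty$, so $\{\phi\ge0\}$ is a (possibly empty) closed interval $[t_-,t_+]$; maximising $\phi$ and demanding a non-negative maximal value is an elementary computation whose outcome is precisely the inequality \eqref{eq:g_smallness}. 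Thus Assumption~\ref{assum:smallness} is exactly what makes $[t_-,t_+]$ non-empty, and then $\vec{S}+\vec{T}-\vec{f}$ is locally coercive with radius $R:=t_-$ on $V_p$ and on every continuously embedded $V_q\injto V_p$. (If $G_2=0$, which forces $\vec{Dg}=\vec{0}$ and $\div\vec{g}=0$, one even has $\{\phi\ge0\}=[t_-,\infty)$, which only simplifies matters.)

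For $p\in\big(\tfrac{3d}{d+2},2\big)$ we have $s=p$ and $\big(\tfrac{p\d}{2}\big)'<p$, so by Lemma~\ref{lem:properties_S} the operator $\vec{S}\colon V_p\to V_p\d$ is monotone, bounded and continuous, and by Lemma~\ref{lem:t2_str_contin} (with $q=p$) the operator $\vec{T}\colon V_p\to V_p\d$ is bounded and strongly continuous; hence $\vec{S}+\vec{T}-\vec{f}$ is pseudomonotone, demicontinuous, bounded and locally coercive, and Br\'ezis' Theorem~\ref{thm:brezis} directly provides a solution $\vec{u}\in V_p$. For $p\in\big(\tfrac{2d}{d+2},\tfrac{3d}{d+2}\big]$, where $s=\big(\tfrac{p\d}{2}\big)'\ge p$, I would fix some $q>\max\{s,2\}$, work on $X:=V_q$, and regularise with the $q$-Laplacian $\fp{\vec{A_n}(\vec{v})}{\vec{\phi}}:=\tfrac1n\fp{\abs{\vec{Dv}}^{q-2}\vec{Dv}}{\vec{D\phi}}$. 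On $V_q$ the operator $\vec{S}+\vec{T}-\vec{f}+\vec{A_n}$ is pseudomonotone (monotone $\vec{S}$ and $\vec{A_n}$, strongly continuous $\vec{T}$ by Lemma~\ref{lem:t2_str_contin}, since $q>\big(\tfrac{p\d}{2}\big)'$), demicontinuous, bounded, and, thanks to $q>2$, coercive.

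Here I expect the main obstacle. A solution $\vec{u}$ of the regularised equation, as produced by Br\'ezis' Theorem~\ref{thm:brezis}, satisfies only $\phi_0(\norm{\vec{Du}}_p)+\tfrac1n\norm{\vec{Du}}_q^q\le0$ when tested with itself, hence $\norm{\vec{Du}}_p\in[0,t_-]\cup[t_+,\infty)$, and since the $V_q$-coercivity radius of the regularised operator blows up as $n\to\infty$, this gives no uniform bound. I would remedy this by solving the $n$-th problem not on all of $V_q$ but as a variational inequality on the bounded, closed, convex set $K_n:=\{\vec{v}\in V_q:\norm{\vec{Dv}}_p\le t_-,\ \norm{\vec{Dv}}_q\le\rho_n\}$ with $\rho_n$ chosen large: pseudomonotonicity of $\vec{S}+\vec{T}-\vec{f}+\vec{A_n}$ on the bounded convex set $K_n$ yields a solution $\vec{u^n}\in K_n$ of the associated variational inequality, and testing this inequality against $\vec{0}$, together with $\phi_0(t_-)=0$ and the above energy estimate, shows (for $\rho_n$ large enough) that $\vec{u^n}$ cannot lie on $\partial K_n$; hence it lies in the interior and therefore solves $\fp{\vec{S}(\vec{u^n})+\vec{T}(\vec{u^n})-\vec{f}+\vec{A_n}(\vec{u^n})}{\vec{\phi}}=0$ for all $\vec{\phi}\in V_q$, with the uniform bound $\norm{\vec{Du^n}}_p\le t_-$ built in. Consequently $\tfrac1n\norm{\vec{Du^n}}_q^q\le\max_{[0,t_-]}(-\phi_0)=:M$, so $\norm{\vec{A_n}(\vec{u^n})}_{W_0^{1,q}(\Omega)\d}\le\tfrac1n\norm{\vec{Du^n}}_q^{q-1}\le M^{(q-1)/q}\,n^{-1/q}\to0$, and $\vec{u^n}\weakto\vec{u}$ weakly in $V_p$ along a subsequence.

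Now Theorem~\ref{thm:ident_limits} applies with $\A(\vec{B}):=\S(\vec{B}+\vec{Dg})$ (admissible by Remark~\ref{rem:S}), $\vec{B}:=\vec{T}-\vec{f}$, $X=V_q$ and $r=q$: the convergences \eqref{eq:ident_assum} have just been verified, $\vec{B}$ is strongly continuous from $V_p$ to $W_0^{1,q}(\Omega)\d$, and $V_q\injto V_p$ is continuous and dense. It follows that $\fp{\S(\vec{Du}+\vec{Dg})}{\vec{D\phi}}+\fp{\vec{T}(\vec{u})}{\vec{\phi}}-\fp{\vec{f}}{\vec{\phi}}=0$ for all $\vec{\phi}\in V_s$, and in the range $p>\tfrac{3d}{d+2}$ the same identity holds with the solution $\vec{u}\in V_p=V_s$ found above. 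Setting $\vec{v}:=\vec{u}+\vec{g}\in W^{1,p}(\Omega)$, which satisfies $\div\vec{v}=g_1$ and $\vec{v}_{\vert\del\Omega}=\vec{g_2}$, and invoking the formal equivalence of $\vec{T}$ with $\fp{(\vec{u}+\vec{g})\cdot\nabla(\vec{u}+\vec{g})}{\vec{\phi}}$ from Lemma~\ref{lem:t2_str_contin}, this is the weak form $\fp{\S(\vec{Dv})}{\vec{D\phi}}+\fp{\vec{T}(\vec{u})}{\vec{\phi}}=\fp{\vec{f}}{\vec{\phi}}$ for all $\vec{\phi}\in V_s$. Finally, the functional $\vec{\phi}\mapsto\fp{\S(\vec{Dv})}{\vec{D\phi}}+\fp{\vec{T}(\vec{u})}{\vec{\phi}}-\fp{\vec{f}}{\vec{\phi}}$ lies in $W_0^{1,s}(\Omega)\d$ (note $\S(\vec{Dv})\in L^{p'}(\Omega)\subset L^{s'}(\Omega)$ and that $\vec{T}\colon V_p\to W_0^{1,s}(\Omega)\d$ is bounded) and annihilates $V_s$; since, via the Bogovski\u{\i} operator of Theorem~\ref{thm:bogovski}, $\nabla\colon L_0^{s'}(\Omega)\to W_0^{1,s}(\Omega)\d$ is an isomorphism onto the annihilator of $V_s$ (de Rham), there is a unique $\pi\in L_0^{s'}(\Omega)$ with $\fp{\S(\vec{Dv})}{\vec{D\phi}}+\fp{\vec{T}(\vec{u})}{\vec{\phi}}-\fp{\pi}{\div\vec{\phi}}=\fp{\vec{f}}{\vec{\phi}}$ for all $\vec{\phi}\in W_0^{1,s}(\Omega)$, i.e.\ $(\vec{v},\pi)\in W^{1,p}(\Omega)\times L^{s'}(\Omega)$ is the sought weak solution of~\eqref{eq:main_problem}. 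The point requiring genuine care remains the uniform a priori estimate for the approximations discussed in the previous paragraph.
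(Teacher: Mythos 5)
Your proposal is correct and follows the paper's overall architecture (lifting the data by Lemma~\ref{lem:div_eq}, the decomposition into $\vec{S}$, $\vec{T}$, the local coercivity from Lemma~\ref{lem:S_estimate} and \eqref{eq:econv} under Assumption~\ref{assum:smallness}, Theorem~\ref{thm:brezis} for $p>\frac{3d}{d+2}$, $q$-Laplacian regularisation plus Theorem~\ref{thm:ident_limits} for $p\le\frac{3d}{d+2}$, and de Rham/Bogovski\u{\i} for the pressure), but you resolve the one genuinely delicate step --- the uniform a priori bound for the regularised solutions --- by a different device. The paper stays inside the operator-equation framework: it equips $V_q$ with the rescaled norms $\lVert\vec{u}\rVert_{q,n}=\max\{n^{-2/(2q-1)}\norm{\vec{Du}}_q,\norm{\vec{Du}}_p\}$, verifies local coercivity of $\vec{P}+\tfrac1n\vec{A}$ with the \emph{fixed} radius $R=[G_3/((2-p)G_1)]^{1/(p-1)}$ in these norms (splitting into the cases of which term realises the maximum), and then Theorem~\ref{thm:brezis} itself delivers $\norm{\vec{Du^n}}_p\le R$ and $n^{-1}\norm{\vec{Du^n}}_q^{q-1}\le n^{-1/(2q-1)}R^{q-1}\to0$. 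You instead solve the $n$-th problem as a variational inequality on the constraint set $K_n=\{\norm{\vec{Dv}}_p\le t_-,\ \norm{\vec{Dv}}_q\le\rho_n\}$ and show by testing with $\vec{0}$ and $\phi_0(t_-)=0$ that the solution is an interior point, hence solves the equation with the bound $\norm{\vec{Du^n}}_p\le t_-$ built in; your computation that \eqref{eq:g_smallness} is exactly the condition $\max_t\,\phi(t)\ge0$ is correct (the paper's $R$ is just one admissible point of $[t_-,t_+]$), your exclusion of the boundary and the decay $\norm{\vec{A_n}(\vec{u^n})}_{W_0^{1,q}(\Omega)\d}\lesssim n^{-1/q}$ are sound, and the limit identification and pressure recovery then coincide with the paper's. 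What the paper's trick buys is that only the already-stated Theorem~\ref{thm:brezis} is needed, at the price of the somewhat ad hoc norms $\lVert\cdot\rVert_{q,n}$; your route is arguably more transparent about where the bound comes from, but it imports the classical existence theorem for pseudomonotone variational inequalities on bounded closed convex sets (Br\'ezis/Lions), which is standard but not among the tools set up in the paper. Two minor points: the interior-point argument needs $t_->0$, i.e.\ $G_3>0$, which fails only in the trivial degenerate case $\vec{f}=\vec{0}$, $\delta=0$, $\vec{g}=\vec{0}$ (the paper's $R>0$ has the same implicit restriction); and your parenthetical claim that $\vec{S}+\vec{T}-\vec{f}$ is "locally coercive ... on every continuously embedded $V_q$" should not be read as coercivity on the $V_q$-norm sphere --- precisely the point your variational-inequality step (and the paper's rescaled norm) is designed to circumvent --- but since you only use the estimate $\fp{\vec{P}(\vec{u})}{\vec{u}}\ge\phi_0(\norm{\vec{Du}}_p)$ for $\vec{u}\in V_q$, this is a matter of wording, not of substance.
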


\subsection{Existence proof} \label{sub:existence}
To get a formulation of \eqref{eq:main_problem}$_1$, we use the
definitions \eqref{eq:defi_S} and \eqref{eq:defi_T} of the operators
$\vec{S}$ and $\vec{T}$ and define the "full" operator
$\vec{P} \colon V_p \to W_0^{1,s}(\Omega)\d$ via
\begin{equation} \begin{split} \label{eq:defi_P}
\fp{\vec{P}(\vec{v})}{\vec{\phi}} := \,
&\fp{\vec{S}(\vec{v}) + \vec{T}(\vec{v}) - \vec{f}}{\vec{\phi}} \\
= \,&\fp{\S(\vec{Dv}+\vec{Dg})}{\vec{D\phi}} - \fp{(\vec{v}+\vec{g}) \otimes (\vec{v}+\vec{g})}{\vec{D \phi}} \\
&- \fp{(\div \vec{g}) (\vec{v}+\vec{g})}{\vec{\phi}}
- \fp{\vec{f}}{\vec{\phi}}
\end{split} \end{equation}
for $\vec{v} \in V_p$ and $\vec{\phi} \in W_0^{1,s}(\Omega)$.

We collect our results on $\vec{S}$ and $\vec{T}$ to deduce properties
of $\vec{P}$: 
\begin{cor} \label{cor:t_estim}
	For $p$, $q$ and $s$ as in Lemma
	\ref{lem:t2_str_contin}, the operator $\vec{P}$ defined in
	\eqref{eq:defi_P} is well-defined, bounded and continuous on
	$V_p \to W_0^{1,s}(\Omega)\d$. It is well-defined, bounded,
	continuous and pseudomonotone on $V_q \to V_q\d$ and it fulfils the
	estimate
	\begin{equation*}
		\fp{\vec{P}(\vec{u})}{\vec{u}} \geq G_1 \norm{\vec{Du}}_p^p - G_2
		\norm{\vec{Du}}_p^2 - G_3 \norm{\vec{Du}}_p 
	\end{equation*}
	for $\vec{u} \in V_q$. If furthermore
	\begin{equation} \label{eq:smallness2}
	(2-p)^{2-p} (p-1)^{p-1} G_1 \geq G_2^{p-1}G_3^{2-p},
	\end{equation}
	then
	\begin{equation*}
		\fp{\vec{P}(\vec{u})}{\vec{u}} \geq 0
	\end{equation*}
	holds for all $\vec{u} \in V_q$ with $\norm{\vec{Du}}_p = R :=
	\left[\frac{G_3}{(2-p)\,G_1}\right]^\frac{1}{p-1}$. 
\end{cor}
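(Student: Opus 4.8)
The plan is to verify each assertion of Corollary~\ref{cor:t_estim} by feeding the previously established properties of $\vec{S}$ and $\vec{T}$ through the decomposition $\vec{P} = \vec{S} + \vec{T} - \vec{f}$. For well-definedness, boundedness and continuity on $V_p \to W_0^{1,s}(\Omega)\d$: the term $\vec{S}$ is well-defined, bounded and continuous on $W_0^{1,p}(\Omega) \to W_0^{1,p}(\Omega)\d$ by Lemma~\ref{lem:properties_S}, hence a fortiori into the larger space $W_0^{1,s}(\Omega)\d$ via the embedding $V_s \injto V_p$; the term $\vec{T}$ has exactly these properties on $V_p \to W_0^{1,s}(\Omega)\d$ by Lemma~\ref{lem:t2_str_contin}; and $\vec{f}$ is a fixed element of $W_0^{1,p}(\Omega)\d \injto W_0^{1,s}(\Omega)\d$. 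Summing preserves all three properties.

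For the statement on $V_q \to V_q\d$: restricting to $V_q$ and noting $V_q \injto V_p$, the operators $\vec{S}$ and $\vec{T}$ are bounded and continuous into $W_0^{1,q}(\Omega)\d \supset V_q\d$ (for $\vec{T}$, use the second half of Lemma~\ref{lem:t2_str_contin}; for $\vec{S}$, the embedding again). The only genuinely new point is \emph{pseudomonotonicity} on $V_q \to V_q\d$. Here I would argue that $\vec{S}$ is pseudomonotone — this follows from the locally uniform monotonicity established in Lemmas~\ref{lem:uni_monotonicity} and~\ref{lem:properties_S} (cf.~Remark~\ref{rem:S}), since a bounded, demicontinuous, monotone-type operator is pseudomonotone by the standard criterion — while $\vec{T}$ is \emph{strongly continuous} from $V_p$ to $W_0^{1,q}(\Omega)\d$ by Lemma~\ref{lem:t2_str_contin} and hence (restricted to $V_q$) a compact perturbation; the constant $-\vec{f}$ is trivially pseudomonotone. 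A sum of a pseudomonotone operator and a strongly continuous one is pseudomonotone, which gives the claim. I expect this to be the main obstacle, in the sense that it requires invoking the right abstract lemma (pseudomonotonicity is not additive in general, but is stable under strongly continuous perturbations) and being careful that all operators live on the common space $V_q$.

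For the coercivity-type estimate: combine the lower bound for $\vec{S}$ from Lemma~\ref{lem:S_estimate},
\[
\fp{\vec{S}(\vec{u})}{\vec{u}} \geq \tfrac{C_3(\S)}{p}\norm{\vec{Du}}_p^p - \big(C_2(\S)+C_3(\S)\big)\bignorm{\abs{\vec{Dg}}+\delta}_p^{p-1}\norm{\vec{Du}}_p,
\]
with the estimate \eqref{eq:econv} for $\abs{\fp{\vec{T}(\vec{u})}{\vec{u}}}$ and the trivial bound $\abs{\fp{\vec{f}}{\vec{u}}} \leq c_\text{Korn}\norm{\vec{f}}_{W_0^{1,p}(\Omega)\d}\norm{\vec{Du}}_p$; grouping the $\norm{\vec{Du}}_p^p$, $\norm{\vec{Du}}_p^2$ and $\norm{\vec{Du}}_p$ contributions exactly reproduces the constants $G_1$, $G_2$, $G_3$ of \eqref{eq:g123}. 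Finally, for the last claim: under \eqref{eq:smallness2}, I would show that the scalar function $\phi(t) := G_1 t^p - G_2 t^2 - G_3 t$ (or rather $G_1 t^{p-1} - G_2 t - G_3 \geq 0$ after dividing by $t$) is nonnegative at $t = R = \big[\tfrac{G_3}{(2-p)G_1}\big]^{1/(p-1)}$. This is a one-variable calculus fact: one checks that the function $t \mapsto G_1 t^{p-1} - G_3$ must dominate $G_2 t$ at this particular $t$, and the inequality \eqref{eq:smallness2} is precisely the condition (obtained, e.g., by a Young-type split of $G_2 t = G_2 t^{2-p}\cdot t^{p-1}$ with exponents $\tfrac{1}{2-p}$ and $\tfrac{1}{p-1}$) guaranteeing this. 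Plugging $t = R$ into $\phi(t)/t$ and using \eqref{eq:smallness2} yields $\fp{\vec{P}(\vec{u})}{\vec{u}} \geq \phi(R) \geq 0$ for all $\vec{u} \in V_q$ with $\norm{\vec{Du}}_p = R$, which is the assertion.
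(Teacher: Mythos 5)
Your proposal is correct and follows essentially the same route as the paper: decompose $\vec{P}=\vec{S}+\vec{T}-\vec{f}$, get pseudomonotonicity from the monotonicity and continuity of $\vec{S}$ together with the strong continuity of $\vec{T}$, combine Lemma~\ref{lem:S_estimate}, estimate \eqref{eq:econv} and the Korn inequality to obtain the lower bound with the constants $G_1,G_2,G_3$, and verify the scalar inequality at $t=R$ via the weighted split with weights $p-1$ and $2-p$ encoded in \eqref{eq:smallness2}. (Only a cosmetic remark: the sum of two bounded pseudomonotone operators is in fact pseudomonotone in the standard theory, so your caveat about non-additivity is unnecessary, but the argument you give is valid regardless.)
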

\begin{proof}
	Well-definedness, boundedness and continuity follow similarly to the
	properties of $\vec{S}$ and $\vec{T}$ in Lemmas
	\ref{lem:properties_S} and \ref{lem:t2_str_contin}.
	
	The continuity of $\S$ and its monotonicity, which follows from
	\eqref{eq:pdelta_growth1}$_1$, yield that $\vec{S}\colon V_q \to V_q\d$ is
	pseudomonotone. Lemma \ref{lem:t2_str_contin} shows that
	$\vec{T}\colon V_q \to V_q\d$ is strongly continuous and thus
	pseudomonotone. Therefore, the sum
	$\vec{P} = \vec{S} + \vec{T} - \vec{f}$ is also pseudomonotone.
	
	By Lemmas \ref{lem:S_estimate} and \ref{lem:t2_str_contin} and by
	the definition of the constants $G_1$, $G_2$, $G_3$
	in~\eqref{eq:g123}, we have
	\begin{align} \label{eq:t_estim1}
		\fp{\vec{P}(\vec{u})}{\vec{u}}
		&\geq \fp{\vec{S}(\vec{u})}{\vec{u}} -
		\abs{\fp{\vec{T}(\vec{u})}{\vec{u}}}
		- \abs{\fp{\vec{f}}{\vec{u}}} \nonumber
		\\
		&\geq G_1 \norm{\vec{Du}}_p^p - G_2 \norm{\vec{Du}}_p^2 - G_3
		\norm{\vec{Du}}_p
	\end{align}
	for any $\vec{u} \in V_q$. 
	
	Now assume that \eqref{eq:smallness2} holds. It follows
	$\left[\frac{(p-1)\,G_1}{G_2}\right]^{p-1} \geq \left[
	\frac{G_3}{(2-p)G_1}\right]^{2-p}$, so we may define
	$R := \left[\frac{G_3}{(2-p)G_1}\right]^\frac{1}{p-1}$ and obtain
	$\frac{(p-1)\,G_1}{G_2} \geq R^{2-p}$. Together, we get
	\begin{equation} \label{eq:t_estim2}
	G_1 R^p = (p-1)G_1 R^p + (2-p)G_1 R^p \geq G_2 R^2 + G_3 R.
	\end{equation}
	We use $\norm{\vec{Du}}_p = R$, insert \eqref{eq:t_estim2} into
	\eqref{eq:t_estim1} and obtain the result. 
\end{proof}
\begin{rem}
	(i)  Note that the dependence of the constants $G_i$, $i=1,2,3$, on
	$\norm{\vec{g}}_{1,s}$ stems only from the estimate of the
	convective term.\\[-3mm]
	
	(ii)  In order to prove $G_1 R^p - G_2 R^2 - G_3 R \geq 0$, we have split
	the positive summand $G_1 R^p$ into two parts using the weights
	$p-1$ and $2-p$.  By considering the weights as a free parameter,
	one can show that this choice is optimal.
\end{rem}

Now we are ready to complete the proof of Theorem
\ref{thm:main_thm}. Since the proof requires an approximation process
only if $p \in {\big ( \frac{2d}{d+2}, \frac{3d}{d+2} \big ]} $, we
handle the two cases separately.

\begin{proof}[Proof of Theorem \ref{thm:main_thm} in the case $p \in
	\big (\frac{3d}{d+2}, 2\big )$]
	In this case we have  $s =
	p$. By Lemma \ref{lem:div_eq}, we  find a function
	$\vec{g} \in W^{1,p}(\Omega)$ that solves the corresponding
	inhomogeneous divergence equation \eqref{eq:g_problem}.
	
	We consider the corresponding operator $\vec{P}$ defined in
	\eqref{eq:defi_P} and prove existence of a function
	$\vec{u} \in V_p$ which satisfies $\vec{P}(\vec{u}) = \vec{0}$.  The
	space $V_p$ is reflexive and separable as a closed subspace of
	$W^{1,p}(\Omega)$. In Corollary \ref{cor:t_estim} (with
	$q = s = p$), we proved that $\vec{P}$ is well-defined, bounded,
	continuous and pseudomonotone on $V_p \to V_p\d$ and we concluded
	from assumption \eqref{eq:g_smallness} that there exists a positive
	number~$R$ such that $\vec{P}$ is locally coercive with radius $R$.
	So we may apply the main theorem on pseudomonotone operators,
	Theorem \ref{thm:brezis}, to the operator $\vec{P}$ on the space
	$V_p$ and obtain a weak solution $\vec{u} \in V_p$ of
	$\vec{P}(\vec{u}) = \vec{0}$.
	
	By a standard characterization of weak gradient fields (cf.~\cite{Sohr}),
	this is equivalent to the existence of a pressure $\pi$ such that
	$(\vec{u}+\vec{g}, \pi) \in W^{1,p}(\Omega) \times L^{p'}(\Omega)$
	solves the original system \eqref{eq:main_problem}.
\end{proof}

\begin{proof}[Proof of Theorem \ref{thm:main_thm} in the case
	$p \in {\big ( \frac{2d}{d+2}, \frac{3d}{d+2} \big ]} $]
	In this case we have  $s = \big (\frac{p\d}{2}\big
	)' $. By assumption \ref{assum:smallness}, there is a function
	$\vec{g} \in W^{1,s}(\Omega)$ which solves the inhomogeneous
	divergence equation \eqref{eq:g_problem} and satisfies
	\eqref{eq:g_smallness}. We prove the existence of a function
	$\vec{u} \in V_p$ that solves $\vec{P}(\vec{u}) = \vec{0}$ for the
	corresponding operator $\vec{P}$ from \eqref{eq:defi_P}.  For
	regularization, we choose some $q > s$ with $q > 2$ and consider the
	symmetric $q$-Laplacian $\vec{A} \colon V_q \to V_q\d$ defined via
	\begin{equation*}
		\fp{\vec{A}(\vec{u})}{\vec{\phi}} := \fp{\abs{ \vec{Du}}^{q-2}
			\vec{Du}}{\vec{D\phi}}. 
	\end{equation*}
	The operator $\vec{A}$ is well-defined, bounded, continuous and
	monotone.
	
	We work in the reflexive and separable spaces $V_q^n$, which are
	defined as the space $V_q$ equipped with the equivalent norms
	$\lVert {\vec{u}} \rVert _{q, n} := \max \left\{ n^\frac{-2}{2q-1}
	\norm{\vec{Du}}_q, \norm{\vec{Du}}_p \right\}$.  For sufficiently
	large $n$, we want to establish the existence of solutions $\vec{u^n} \in V_q^n$ to
	the equation
	\begin{equation} \label{eq:tq}
	\fp{\vec{P}(\vec{u^n})}{\vec{\phi}} + \tfrac{1}{n}
	\fp{\vec{A}(\vec{u^n})}{\vec{\phi}} = 0 
	\end{equation}
	for $\vec{\phi} \in V_q$, which shall approximate a solution of the
	original equation.  The operator $\vec{P}$ is pseudomonotone,
	continuous and bounded by Corollary~\ref{cor:t_estim} and the same
	holds for $\vec{A}$ and their sum $\vec{P}+\frac{1}{n}\vec{A}$.  We
	prove local coercivity of $\vec{P} + \frac{1}{n} \vec{A}$ with
	radius $R := \left[\frac{G_3}{(2-p)G_1}\right]^\frac{1}{p-1}$. So,
	let $\lVert {\vec{u}} \rVert _{q, n} = R$. If
	$n^\frac{-2}{2q-1} \norm{\vec{Du}}_q \leq \norm{\vec{Du}}_p$, we
	have $\norm{\vec{Du}}_p = \lVert {\vec{u}} \rVert _{q, n} = R$ and we get
	$\fp{\vec{P}(\vec{u})}{\vec{u}} \geq 0$ by assumption
	\eqref{eq:g_smallness} and Corollary \ref{cor:t_estim}.  Otherwise,
	suppose $n^\frac{-2}{2q-1} \norm{\vec{Du}}_q > \norm{\vec{Du}}_p$,
	so $R = n^\frac{-2}{2q-1} \norm{\vec{Du}}_q$ and
	$R > \norm{\vec{Du}}_p$. This, Corollary \ref{cor:t_estim} and the
	Sobolev embedding $V_q \injto V_p$ imply
	\begin{align*}
		\fp{\vec{P}(\vec{u})}{\vec{u}} + \tfrac{1}{n}\fp{\vec{A}(\vec{u})}{\vec{u}} 
		&\geq G_1 \norm{\vec{Du}}_p^p - G_2 \norm{\vec{Du}}_p^2 - G_3
		\norm{\vec{Du}}_p + \tfrac{1}{n}\norm{\vec{Du}}_q^q
		\\ 
		&> -G_2 R^{\,2} - G_3 R + n^\frac{1}{2q-1} R^{\,q}\!.
	\end{align*}
	As $n^\frac{1}{2q-1}$ grows to infinity, the latter expression
	becomes positive for any $R > 0$ and sufficiently large $n$.
	
	Thus, the existence Theorem \ref{thm:brezis} gives us solutions
	$\vec{u^n} \in V_q^n$ of \eqref{eq:tq} with
	\begin{equation} \label{eq:bound_un} \max \left\{ n^\frac{-2}{2q-1}
	\norm{ \vec{Du}}_q\!, \norm{\vec{Du}}_p \right\} =
	\lVert {\vec{u^n}} \rVert _{q, n} \leq R
	\end{equation}
	and the bound $R$ holds uniformly with respect to $n$.
	
	We switch to a weakly convergent (and renamed) subsequence
	$\vec{u^n}\weakto \vec{u}$ in $V_p$. The bound \eqref{eq:bound_un}
	implies  $\norm{n^{-1}\vec{A}(\vec{u^n})}_{q'} = n^{-1}
	\norm{\vec{Du^n}}_{q}^{q-1} \leq n^{\frac{-1}{2q-1}} R^{\,q-1} \to
	0$ as $n \to \infty $.
	
	We apply Theorem \ref{thm:ident_limits} with the shifted
	extra stress tensor $\A(\cdot) := \S(\cdot+\vec{Dg})$, 
	$\vec{B} := \vec{T} - \vec{f}\colon V_p \to V_s\d$,   $r := q$, $X := V_q$ and
	$\vec{A_n} := n^{-1}\vec{A}\colon V_q \to V_q\d\!$ and obtain that
	$\vec{u}$ solves $\vec{P}(\vec{u}) = \vec{0}$ weakly.
	
	Similarly to the proof in the first case, we obtain a pressure $\pi$
	such that the pair
	$(\vec{u}+\vec{g}, \pi) \in W^{1,p}(\Omega) \times L^{s'}(\Omega)$
	is a weak solution of \eqref{eq:main_problem}.
\end{proof}

\subsection{Less regular data} \label{sub:less_regularity}

In Theorem \ref{thm:main_thm}, we demanded additional regularity of
the data: we required $\vec{g} \in W^{1,s}(\Omega)$ with
$s = \big (\frac{p\d}{2}\big )' > p$ in the case
$p \in \big (\frac{2d}{d+2}, \frac{3d}{d+2}\big )$, while the solution
$\vec{v}$ is sought only in $W^{1,p}(\Omega)$. Thus, we want to
discuss whether this assumption is really necessary or if it may be
removed, perhaps for the price of more regular test functions. This
question only arises if
$p \in \big (\frac{2d}{d+2}, \frac{3d}{d+2}\big )$, since one has
$s = p$ and $\vec{g} \in W^{1,s}(\Omega) = W^{1,p}(\Omega)$ in the
other case.

In the proof of Theorem \ref{thm:main_thm}, we used the additional
regularity of our data to get more convenient estimates of the
convective term in Lemma \ref{lem:t2_str_contin}. Since these
estimates are mainly based on the H\"older inequality, one has to use
a stronger norm of $\vec{u}$ if only $\vec{g}\in W^{1,p}(\Omega)$ is
presumed. By \eqref{eq:t2_eq4} and the H\"older inequality, one
obtains the following result similar to Lemma \ref{lem:t2_str_contin}:

\begin{lem}[Properties of $\vec{T}$] \label{lem:t2_str_contin2}
	Let $p \in \big (\frac{2d}{d+2}, \frac{3d}{d+2}\big )$ and $q$ be so
	large that it holds both $q > s = \big (\frac{p\d}{2}\big )'$ and
	$\frac{1}{q'} \geq \frac{1}{p}+\frac{1}{p\d}$. For any given
	function $\vec{g} \in W^{1,p}(\Omega)$, the operator $\vec{T}$
	has the upper bound 
	\begin{align*}
		\abs{\fp{\vec{T}(\vec{u})}{\vec{u}}}
		&\leq c_\text{Sob} c_\text{Korn}^2 \left( \norm{\vec{Dg}}_p +
		\tfrac{1}{2} \norm{\div \vec{g}}_p \right) \norm{\vec{Du}}_p
		\norm{\vec{Du}}_q
		\\ 
		&\quad + c_\text{Sob} \left( \norm{\vec{g}}_{1,p}^2 +
		c_\text{Korn} \norm{\div \vec{g}}_p \norm{\vec{g}}_{1,p} \right)
		\norm{\vec{Du}}_q 
	\end{align*}
	for all $\vec{u} \in V_q$, where $C_\text{Sob}$ are Sobolev
	embedding constants.
\end{lem}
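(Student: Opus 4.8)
The plan is to start from the identity \eqref{eq:t2_eq4} for the convective term, which was already obtained by integration by parts in the proof of Lemma~\ref{lem:t2_str_contin}:
\begin{equation*}
	\fp{\vec{T}(\vec{u})}{\vec{u}} = - \fp{\vec{u} \otimes \vec{u}}{\vec{Dg}} + \tfrac{1}{2} \fp{g_1 \vec{u}}{\vec{u}} + \fp{\vec{g} \otimes \vec{g}}{\vec{Du}} + \fp{g_1 \vec{g}}{\vec{u}},
\end{equation*}
where $g_1 := \div \vec{g} \in L^p(\Omega)$. I would first record that for $\vec{u} \in V_q$ and $\vec{g} \in W^{1,p}(\Omega)$ every pairing on the right is finite and every integration by parts leading to this identity is licit: since $\vec{u}, \vec{g} \in W^{1,p}(\Omega) \injto L^{p\d}(\Omega)$, the hypothesis $\tfrac{1}{q'} \geq \tfrac{1}{p} + \tfrac{1}{p\d}$ makes the relevant H\"older exponents add up to at most one (it also forces $2q' \leq p\d$, because $p \leq p\d$). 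This is precisely the place where the weaker assumption $\vec{g} \in W^{1,p}(\Omega)$ is bought back by testing only against the smaller space $V_q$ with $q$ large.

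Next I would estimate the four terms by H\"older's inequality, the Sobolev embedding $W^{1,p}(\Omega) \injto L^{p\d}(\Omega)$, and Korn's and Poincar\'e's inequalities on $V_q$. For the two terms quadratic in $\vec{u}$ I would place one factor $\vec{u}$ in $L^{p\d}(\Omega)$ and the other in $L^q(\Omega)$, giving $\abs{\fp{\vec{u} \otimes \vec{u}}{\vec{Dg}}} \leq \norm{\vec{u}}_{p\d}\norm{\vec{u}}_q\norm{\vec{Dg}}_p \leq c_\text{Sob} c_\text{Korn}^2 \norm{\vec{Dg}}_p \norm{\vec{Du}}_p \norm{\vec{Du}}_q$ and, identically with $g_1$ in place of $\vec{Dg}$, $\tfrac12\abs{\fp{g_1 \vec{u}}{\vec{u}}} \leq \tfrac12 c_\text{Sob} c_\text{Korn}^2 \norm{\div\vec{g}}_p \norm{\vec{Du}}_p \norm{\vec{Du}}_q$; the admissibility of these H\"older splittings, $\tfrac{1}{p\d}+\tfrac{1}{q}+\tfrac{1}{p}\leq 1$, is exactly the hypothesis on $q$. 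For the two terms linear in $\vec{u}$ I would estimate $\vec{g}$ in $L^{2q'}(\Omega)$ and in $L^{p\d}(\Omega)$ respectively (the former via $L^{p\d}(\Omega)\injto L^{2q'}(\Omega)$), $g_1$ in $L^p(\Omega)$ and $\vec{u}$ in $L^q(\Omega)$, to get $\abs{\fp{\vec{g} \otimes \vec{g}}{\vec{Du}}} \leq \norm{\vec{g}}_{2q'}^2\norm{\vec{Du}}_q \leq c_\text{Sob} \norm{\vec{g}}_{1,p}^2 \norm{\vec{Du}}_q$ and $\abs{\fp{g_1 \vec{g}}{\vec{u}}} \leq c_\text{Sob} c_\text{Korn} \norm{\div\vec{g}}_p \norm{\vec{g}}_{1,p} \norm{\vec{Du}}_q$. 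Inserting the four estimates into \eqref{eq:t2_eq4} and using the triangle inequality then yields the asserted bound.

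The argument is entirely routine; the only point that requires attention is the exponent bookkeeping, i.e.\ checking that the single condition $\tfrac{1}{q'} \geq \tfrac{1}{p} + \tfrac{1}{p\d}$ — together with $q > s$ — simultaneously makes \eqref{eq:t2_eq4} meaningful, yields $2q' \leq p\d$, and renders each of the H\"older splittings above admissible. Making this transparent is also what exhibits the trade-off: giving up one degree of integrability of $\vec{g}$ (from $W^{1,s}(\Omega)$ down to $W^{1,p}(\Omega)$) is paid for precisely by the passage from the $\norm{\vec{Du}}_p$-norm to the stronger $\norm{\vec{Du}}_q$-norm in the estimate.
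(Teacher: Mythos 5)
Your proposal is correct and follows exactly the route the paper indicates for this lemma: start from the identity \eqref{eq:t2_eq4}, split each term by H\"older using $\vec{u}\in L^{p\d}\cap L^q$ and $\vec{g}\in L^{p\d}\injto L^{2q'}$, and close with the Sobolev, Korn and Poincar\'e inequalities; your exponent bookkeeping (reducing everything to $\tfrac{1}{q'}\geq\tfrac{1}{p}+\tfrac{1}{p\d}$, which also gives $2q'\leq p\d$) is accurate. Your explicit remark that the integrations by parts behind \eqref{eq:t2_eq4} remain licit for $\vec{u}\in V_q$ and $\vec{g}\in W^{1,p}(\Omega)$ is a welcome addition the paper leaves implicit.
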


This and Lemma \ref{lem:S_estimate} yield an alternative lower bound
of $\vec{P}$: 

\begin{cor}[Alternative estimate of $\vec{P}$] \label{cor:t_estim2}
	Let $p \in \big (\frac{2d}{d+2}, \frac{3d}{d+2}\big )$ and $q$ be so
	large that it holds both $q > s = \big (\frac{p\d}{2}\big )'$ and
	$\frac{1}{q'} \geq \frac{1}{p}+\frac{1}{p\d}$. For any given
	function $\vec{g} \in W^{1,p}(\Omega) \setminus \{\vec{0}\}$, there
	are constants $F_1, F_2, G_1 \geq 0$ with $F_1, G_1 > 0$ such that
	it holds
	\begin{equation*}
		\fp{\vec{P}(\vec{u})}{\vec{u}} \geq G_1 \norm{\vec{Du}}_p^p -
		F_1 \norm{\vec{Du}}_q - F_2 \norm{\vec{Du}}_p
		\norm{\vec{Du}}_q 
	\end{equation*}
	for all $\vec{u} \in V_q$.
\end{cor}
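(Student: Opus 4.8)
The plan is to combine the two estimates we already have: the lower bound for $\vec{S}$ from Lemma \ref{lem:S_estimate} and the new upper bound for $\vec{T}$ from Lemma \ref{lem:t2_str_contin2}, together with a trivial bound for the force term. Writing out $\fp{\vec{P}(\vec{u})}{\vec{u}} = \fp{\vec{S}(\vec{u})}{\vec{u}} + \fp{\vec{T}(\vec{u})}{\vec{u}} - \fp{\vec{f}}{\vec{u}}$, we bound below using $\fp{\vec{S}(\vec{u})}{\vec{u}} \geq \frac{C_3(\S)}{p}\norm{\vec{Du}}_p^p - (C_2(\S)+C_3(\S))\bignorm{\abs{\vec{Dg}}+\delta}_p^{p-1}\norm{\vec{Du}}_p$ and bound above $\abs{\fp{\vec{T}(\vec{u})}{\vec{u}}}$ by the expression in Lemma \ref{lem:t2_str_contin2}, and finally $\abs{\fp{\vec{f}}{\vec{u}}} \leq c_\text{Korn}\norm{\vec{f}}_{W_0^{1,p}(\Omega)\d}\norm{\vec{Du}}_p$.

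Collecting the terms by the type of norm factor they carry, the $\norm{\vec{Du}}_p^p$ contribution gives $G_1 := \frac{1}{p}C_3(\S) > 0$. All the terms linear in $\norm{\vec{Du}}_q$ alone, i.e.\ the part of the $\vec{T}$-bound without the extra $\norm{\vec{Du}}_p$ factor, combine into the constant $F_1 := c_\text{Sob}\big(\norm{\vec{g}}_{1,p}^2 + c_\text{Korn}\norm{\div\vec{g}}_p\norm{\vec{g}}_{1,p}\big)$; this is strictly positive precisely because we assumed $\vec{g} \neq \vec{0}$. The mixed terms scaling like $\norm{\vec{Du}}_p\norm{\vec{Du}}_q$ give $F_2 := c_\text{Sob}c_\text{Korn}^2\big(\norm{\vec{Dg}}_p + \frac{1}{2}\norm{\div\vec{g}}_p\big)$. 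The two remaining terms linear in $\norm{\vec{Du}}_p$ alone — the one from the $\vec{S}$-estimate and the one from the force — are each nonnegative in the right direction to be absorbed; but to match the stated form with only three error terms, I would simply note that $\norm{\vec{Du}}_p \leq c\,\norm{\vec{Du}}_q$ by the Sobolev embedding $V_q \injto V_p$ (which holds since $q > p$), so these terms are dominated by a multiple of $\norm{\vec{Du}}_q$ and hence folded into $F_1$ (or, if one prefers, note they are of the same type as the $F_2$-term after writing $\norm{\vec{Du}}_p \leq c\norm{\vec{Du}}_q$, but keeping them in $F_1$ is cleanest). This yields exactly $\fp{\vec{P}(\vec{u})}{\vec{u}} \geq G_1\norm{\vec{Du}}_p^p - F_1\norm{\vec{Du}}_q - F_2\norm{\vec{Du}}_p\norm{\vec{Du}}_q$ with $G_1, F_1 > 0$ and $F_2 \geq 0$, which is the assertion.

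There is no real obstacle here — the statement is essentially a bookkeeping corollary of Lemmas \ref{lem:S_estimate} and \ref{lem:t2_str_contin2}. The only point requiring a moment of care is the strict positivity of $F_1$: this is why the hypothesis $\vec{g} \in W^{1,p}(\Omega)\setminus\{\vec{0}\}$ is imposed, so that $\norm{\vec{g}}_{1,p} > 0$ and hence the $\norm{\vec{g}}_{1,p}^2$ term makes $F_1$ strictly positive. (If $\vec{g} = \vec{0}$ the convective term reduces to the homogeneous one and the analysis is different; that case is excluded.) One should also double-check that the index condition $\frac{1}{q'} \geq \frac{1}{p} + \frac{1}{p\d}$ assumed in Lemma \ref{lem:t2_str_contin2} is exactly what licenses the Hölder estimate there and that the embedding $V_q \injto V_p$ is available under $q > s > p$ — both are immediate. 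Thus the proof is just: expand $\vec{P}$, apply the two lemmas and the bound on $\vec{f}$, regroup by norm type, absorb the lower-order $\norm{\vec{Du}}_p$ terms via $V_q \injto V_p$, and read off $F_1, F_2, G_1$.
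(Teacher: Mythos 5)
Your proposal is correct and matches the paper's intent: the paper states the corollary follows directly from Lemma~\ref{lem:S_estimate} and Lemma~\ref{lem:t2_str_contin2} without spelling out the bookkeeping, and you carry out exactly that bookkeeping, including the absorption of the two $\norm{\vec{Du}}_p$-terms into $F_1\norm{\vec{Du}}_q$ via $\norm{\vec{Du}}_p \leq c\,\norm{\vec{Du}}_q$ on the bounded domain and the observation that $\vec{g}\neq\vec{0}$ guarantees $F_1>0$.
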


To the authors' knowledge, a substantial improvement of the
estimates in Lemma~\ref{lem:t2_str_contin2} and Corollary
\ref{cor:t_estim2} is not available. So we ask whether the proof of
Theorem \ref{thm:main_thm} can be modified such that it works out with
\emph{these} estimates.

The main theorem on pseudomonotone operators, Theorem
\ref{thm:brezis}, which was used to obtain approximate solutions in
some smoother space $X = V_p \cap Y$ already gave a priori estimates
for these approximate solutions. These a priori bounds were needed to
establish a weak accumulation point.  The next Lemma shows that it is
impossible to obtain approximate solutions which are coming with an a
priori bound in $V_p$, by Brouwer's fixed point theorem/the main
theorem on pseudomonotone operators.

\begin{lem}[Limits for the applicability of pseudomonotone operator
	theory for solving~\eqref{eq:main_problem}] \label{lem:no_brouwer2}
	Let $F_1, G_1, R, 1<p<2<q$ be arbitrary, positive constants,
	$Y \subset L^1(\Omega)$ be a Banach space with norm $\norm{\cdot}_Y$
	and assume there is no continuous embedding $V_p \injto Y\!$. Define
	the operators $P_n\colon V_p \cap Y \to \R$ via
	\begin{equation*}
		P_n(\vec{u}) := G_1 \norm{\vec{Du}}_p^p + \tfrac{1}{n}
		\norm{\vec{u}}_Y^q - F_1 \norm{\vec{u}}_Y\!. 
	\end{equation*}
	Consider Banach spaces $Y_n := (V_p \cap Y, \norm{\cdot}_{Y_n})$
	which satisfy $Y_n\injto Y$ and
	${\norm{\vec{D}\cdot}_p \leq \norm{\cdot}_{Y_n}}$ for all $n \in
	\N$. Then, for all sufficiently large $n$ there exists a function
	$\vec{u^n} \in V_p \cap Y$ with $\norm{\vec{u^n}}_{Y_n}=R$ and
	\begin{equation} \label{eq:eq1}
	P_n(\vec{u^n}) < 0.
	\end{equation}
\end{lem}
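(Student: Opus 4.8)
The plan is to produce, for each large $n$, a single explicit test vector on the sphere $\{\norm{\cdot}_{Y_n}=R\}$ and to exploit that the absence of a continuous embedding $V_p\injto Y$ lets us prescribe the two quantities $\norm{\vec{D}\cdot}_p$ and $\norm{\cdot}_Y$ essentially independently on $V_p\cap Y$.

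First I would fix, once and for all, a vector field $\vec{w^*}\in V_p\cap Y$ with $\vec{w^*}\neq\vec{0}$ whose $Y$-to-symmetric-gradient ratio $\rho^*:=\norm{\vec{w^*}}_Y/\norm{\vec{Dw^*}}_p$ is as large as we will need, say $\rho^*>2G_1R^{p-1}/F_1$. Such a $\vec{w^*}$ exists precisely because there is no continuous embedding $V_p\injto Y$: if no $\vec{w^*}$ with arbitrarily large ratio existed, the inclusion of $V_p\cap Y$ into $Y$ would satisfy $\norm{\vec v}_Y\le c\,\norm{\vec{Dv}}_p$, and since the smooth, compactly supported, divergence-free fields lie in $V_p\cap Y$ and are dense in $V_p$, this bound would extend to a continuous embedding $V_p\injto Y$. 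Note that $\vec{w^*}$, hence $\rho^*$, depends only on $F_1,G_1,R,p$, not on $n$.

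Then, for each $n$, I would set
\begin{equation*}
\vec{u^n}:=\frac{R}{\norm{\vec{w^*}}_{Y_n}}\,\vec{w^*}\in V_p\cap Y,
\end{equation*}
which is well-defined because $0<\norm{\vec{w^*}}_{Y_n}<\infty$, and which satisfies $\norm{\vec{u^n}}_{Y_n}=R$ by homogeneity of the norm. Put $a_n:=\norm{\vec{Du^n}}_p$. The assumed inequality $\norm{\vec{D}\cdot}_p\le\norm{\cdot}_{Y_n}$ gives $a_n\le\norm{\vec{u^n}}_{Y_n}=R$, while $a_n>0$ since $\vec{Dw^*}\neq\vec{0}$; by the definition of $\rho^*$ one has $\norm{\vec{u^n}}_Y=\rho^*a_n$. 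Substituting into the definition of $P_n$ yields
\begin{equation*}
P_n(\vec{u^n})=G_1a_n^p+\tfrac1n(\rho^*a_n)^q-F_1\rho^*a_n
=a_n\Bigl(G_1a_n^{p-1}+\tfrac1n(\rho^*)^q a_n^{q-1}-F_1\rho^*\Bigr).
\end{equation*}
Since $1<p<2<q$ and $0<a_n\le R$, we have $G_1a_n^{p-1}\le G_1R^{p-1}$ and $\tfrac1n(\rho^*)^q a_n^{q-1}\le\tfrac1n(\rho^*)^q R^{q-1}\to0$ as $n\to\infty$. Hence, as soon as $n>2(\rho^*)^{q-1}R^{q-1}/F_1$, the bracket is at most $G_1R^{p-1}+\tfrac12F_1\rho^*-F_1\rho^*=G_1R^{p-1}-\tfrac12F_1\rho^*<0$ by the choice of $\rho^*$; since $a_n>0$, this gives $P_n(\vec{u^n})<0$, which is the assertion.

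I do not expect a genuine technical obstacle. The only delicate point is conceptual: one must resist scaling $\vec{w^*}$ so as to \emph{maximise} $\norm{\cdot}_Y$ on the sphere — which could let the superlinear term $\tfrac1n\norm{\cdot}_Y^q$ dominate when $\norm{\cdot}_{Y_n}$ is a very weak norm — and instead keep $\norm{\vec{u^n}}_Y$ proportional to $\norm{\vec{Du^n}}_p\le R$ via a fixed direction, so that the regularising factor $\tfrac1n$ absorbs the top-order term. A secondary point to state carefully is the reading of ``no continuous embedding $V_p\injto Y$'' together with the density of smooth divergence-free fields in $V_p$ used to produce $\vec{w^*}$, and the harmless fact that the threshold for ``$n$ sufficiently large'' may depend on $\vec{w^*}$.
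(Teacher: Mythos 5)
Your proof is correct, and it takes a genuinely simpler route than the paper's. The paper does not fix a direction; it works with the whole sphere $\{\norm{\cdot}_{Y_n}=R\}$, introduces the quantities $f(n)=\inf \norm{\cdot}_{Y_n}/\norm{\cdot}_Y$ and $c_s=\sup \norm{\vec{D}\cdot}_p/\norm{\cdot}_Y$, and then splits into two cases according to whether $f(n)^{q-1}\geq c_1 n^{-1}$. In the bad case (weak control of $\norm{\cdot}_Y$ on the sphere) the paper has to locate a zero $y_n$ of the auxiliary function $t_n(x)=\tfrac{c_2}{n}x^{q-1}-F_1$ in a suitable interval and then, via the mean value theorem on the (path-connected) $Y_n$-sphere, manufacture a test element with prescribed $Y$-norm $y_n$. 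Your observation that one should instead commit to a \emph{single} direction $\vec{w^*}$ with ratio $\rho^*=\norm{\vec{w^*}}_Y/\norm{\vec{Dw^*}}_p$ exceeding $2G_1R^{p-1}/F_1$, and then scale it onto each sphere, removes the need for this entire apparatus: since $\norm{\vec{u^n}}_Y=\rho^* a_n\leq\rho^*R$ is bounded uniformly in $n$, the regularising term $\tfrac1n\norm{\vec{u^n}}_Y^q$ vanishes automatically, and no case distinction arises. This is a cleaner argument that proves the same statement.

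Both proofs share the same slightly informal step: extracting, from ``no continuous embedding $V_p\injto Y$'', an element of $V_p\cap Y$ with large $\norm{\cdot}_Y/\norm{\vec{D}\cdot}_p$ ratio. You make the density/completion step explicit (and in doing so implicitly assume that smooth divergence-free fields lie in $Y$, so that $V_p\cap Y$ is dense in $V_p$); the paper makes the same leap without comment in its Step 1. This shared gloss is harmless for the intended choices $Y=V_q$ or $Y=L^q(\Omega)$ discussed in the surrounding remark, but it is worth noting that neither argument is fully self-contained for an arbitrary Banach subspace $Y\subset L^1(\Omega)$. Apart from that, your computation is complete: $a_n>0$ because $\vec{Dw^*}\neq\vec{0}$ for a nonzero element of $V_p$, the factorisation $P_n(\vec{u^n})=a_n\bigl(G_1a_n^{p-1}+\tfrac1n(\rho^*)^qa_n^{q-1}-F_1\rho^*\bigr)$ is correct, and the threshold $n>2(\rho^*)^{q-1}R^{q-1}/F_1$ makes the bracket strictly negative.
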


\begin{rem}[Discussion of the assumptions in Lemma
	\ref{lem:no_brouwer2}] \label{rem:lem24_assumptions}
	The fact that the space $V_p \cap Y$, where we seek for approximate
	solutions, needs to be strictly smoother than $V_p$ is caught up in
	the assumption that $V_p$ does not embed continuously into $Y\!$.
	
	The term $\frac{1}{n} \norm{\vec{u}}_Y^q$ in the definition of~$P_n$
	may be equivalently replaced by any operator that is coercive on $Y$
	damped by some factor which is decreasing in $n$.
	
	The existence of embeddings $Y_n \injto Y$ is a natural assumption
	for the intersection of Banach spaces. The requirement
	$\norm{\vec{D} \cdot}_p \leq \norm{\cdot}_{Y_n}$ for all $n$ implies
	the existence of embeddings $Y_n \injto V_p$ and further means that
	a-priori estimates of the form $\norm{\vec{u^n}}_{Y_n}< c$ imply
	uniform boundedness in the weaker norm
	$\norm{\vec{D} \vec{u^n}}_p < c$ which is a necessary element in the
	proof of Theorem \ref{thm:main_thm}.
	
	Typically, one chooses $Y=V_q$ or $Y=L^q(\Omega)$ for some large
	number $q \in\R$ and works with the weighted sum norm
	$\norm{\cdot}_{Y_n} = \norm{\vec{D} \cdot}_p + \tfrac{1}{n}
	\norm{\cdot}_Y$. Obviously, the assumptions from Lemma
	\ref{lem:no_brouwer2} are fulfilled for such choices.
\end{rem}

\begin{proof}[Proof of Lemma \ref{lem:no_brouwer2}]
	Since $Y \subset L^1(\Omega)$, the intersection $V_p \cap Y$ is
	well-defined.  For each $n\in\N$, we define
	$f(n) := \inf_{u\in Y_n\setminus\{0\} }
	\frac{\norm{\vec{u}}_{Y_n}}{\norm{\vec{u}}_Y}$.  The embedding
	$Y_n\injto Y$ and the assumption on $\norm{\cdot}_{Y_n}$ imply that
	$f(n)$ is strictly positive and that
	\begin{equation} \label{eq:eq2}
	\max \left\{ \norm{\vec{D} \vec{u}}_p, f(n) \norm{\vec{u}}_Y \right\}
	\leq R
	\end{equation}
	holds for all $\vec{u} \in Y_n$ with $\norm{\vec{u}}_{Y_n}=R$. 
	
	We define the constant
	$c_s := \sup_{\vec{u}\in Y_n\setminus\{0\} } \frac{\norm{\vec{D}
			\vec{u}}_p}{\norm{\vec{u}}_Y} \in (0, \infty]$ and use the
	convention $\frac{t}{\infty}=0$ for any $t\in\R$.
	
	Our first step is the indirect proof of an upper bound on
	$f$. Therefore, we choose $c_1\in\R$ so large that
	$\frac{R^{\,q-1}}{c_1} - \frac{F_1}{2} < 0$ is satisfied.
	
	\emph{Step 1: We prove that \eqref{eq:eq1} is true for those
		$n \in \N$ with $f(n)^{q-1} \geq c_1 n^{-1}$.}
	
	By \eqref{eq:eq2}, we obtain
	\begin{equation*}
		\frac{1}{n} \norm{\vec{u}}_Y^q - \frac{F_1}{2} \norm{\vec{u}}_Y
		\leq \norm{\vec{u}}_Y \left[ \frac{R^{\,q-1}}{n f(n)^{q-1}} - \frac{F_1}{2} \right]
		\leq \norm{\vec{u}}_Y \left[ \frac{R^{\,q-1}}{c_1} - \frac{F_1}{2} \right]
		< 0
	\end{equation*}
	for any $\vec{u}\in Y_n$ with $\norm{\vec{u}}_{Y_n}=R$. Since there
	is no embedding $V_p \injto Y\!$, we may find some
	$\vec{u^n} \in Y_n$ with $\norm{\vec{u^n}}_{Y_n}=R$ and
	$G_1 \norm{\vec{D} \vec{u^n}}_p^p < \frac{F_1}{2}
	\norm{\vec{u^n}}_Y$. Together, we obtain
	$G_1 \norm{\vec{D} \vec{u^n}}_p^p + \frac{1}{n} \norm{\vec{u^n}}_Y^q
	- F_1 \norm{\vec{u^n}}_Y < 0$, which is \eqref{eq:eq1}.
	
	In the following, we only consider those $n \in \N$ with
	$f(n)^{q-1} < c_1 n^{-1}$.
	
	\emph{Step 2: Computation of suitable norms for functions
		$\vec{u^n}$ such that \eqref{eq:eq1} becomes true.}
	
	We choose $c_2 > 1$ so large that
	$\frac{c_2\, R^{\,q-1}}{F_1} \geq c_1$ and define the auxiliary
	functions $t_n\colon \R\to\R$ via
	\begin{equation*}
		t_n(x) := \frac{c_2}{n} x^{q-1} - F_1.
	\end{equation*}
	We claim that for all sufficiently large $n$, the equation
	$t_n(x)=0$ has a solution  $y_n \in \big ( \frac{R}{c_s}, \frac{R}{f(n)} \big )$.
	Since the upper bound on $f$ implies $f(n) \to 0$ as $n \to\infty$,
	it holds $\frac{R}{c_s} < \frac{R}{f(n)}$ for sufficiently large $n$
	and the interval $\big ( \frac{R}{c_s}, \frac{R}{f(n)} \big )$ is
	not empty.  We have
	$t_n\! \big (\frac{R}{c_s}\big ) = \frac{c_2 R^{\,q-1}}{n\,
		c_s^{q-1}} - F_1 < 0$ for sufficiently large $n$ and the
	definitions of $c_1$ and $c_2$ imply
	$\frac{c_2 R^{\,q-1}}{F_1} \geq c_1 > n f(n)^{q-1}$, thus
	$t_n\! \big (\frac{R}{f(n)}\big ) = \frac{c_2\, R^{\,q-1}}{n\,
		f(n)^{\,q-1}} - F_1 > 0$. The existence of zeroes $y_n$ then
	follows from the mean value theorem.
	
	Right from the definition of $y_n$, we obtain
	$\frac{1}{n}\, y_n^{\,q} - \frac{1}{c_2} F_1 y_n = 0$ and
	\begin{equation*}
		\left[1-\frac{1}{c_2}\right] F_1 y_n =
		\left[1-\frac{1}{c_2}\right] F_1 \left[ \frac{nF_1}{c_2}
		\right]^{\frac{1}{q-1}} > G_1 R^p 
	\end{equation*}
	for sufficiently large $n$.  Together, it follows
	\begin{equation*}
		G_1 R^p + \frac{1}{n} y_n^q - F_1 y_n   = G_1 R^p -
		\left[1-\frac{1}{c_2}\right] F_1 y_n + \frac{1}{n} y_n^q -
		\frac{1}{c_2} F_1 y_n    < 0.
	\end{equation*}
	Thus, any function $\vec{u^n} \in V_p \cap Y$ with
	$\norm{\vec{u^n}}_{Y_n}=R$ and $\norm{\vec{u^n}}_{Y}=y_n$ satisfies
	\begin{equation*}
		P_n(\vec{u^n}) \leq G_1 R^p + \frac{1}{n} y_n^q - F_1 y_n < 0.
	\end{equation*}
	
	\emph{Step 3: Construction of functions $\vec{u^n}\in V_p \cap Y$
		with prescribed norms.}
	
	It remains to prove that for any
	$y_n \in \big ( \frac{R}{c_s}, \frac{R}{f(n)} \big )$, there is a
	function $\vec{u^n} \in V_p \cap Y$ with $\norm{\vec{u^n}}_{Y_n}=R$
	and $\norm{\vec{u^n}}_{Y}=y_n$.
	
	Assume that $\norm{\vec{u}}_{Y}>y_n$ for all $\vec{u} \in Y_n$ with
	$\norm{\vec{u}}_{Y_n}=R$. Since
	$y_n \in \big( \frac{R}{c_s}, \frac{R}{f(n)} \big)$, it holds
	$c_s > \frac{R}{y_n}$. By the definition of $c_s$, this implies the
	existence of a function $\vec{u} \in Y_n$ such that
	$\norm{\vec{D} \vec{u}}_p > \frac{R}{y_n} \norm{\vec{u}}_Y$. Without
	loss of generality, we may scale $\vec{u}$ such that
	$\norm{\vec{u}}_{Y_n}=R$. We compile these estimates of $\vec{u}$
	and apply \eqref{eq:eq2} to obtain
	\begin{equation*}
		R < \frac{R}{y_n} \norm{\vec{u}}_Y 
		< \norm{\vec{D} \vec{u}}_p
		\leq R,
	\end{equation*}
	which is impossible.
	
	Now assume that $\norm{\vec{u}}_{Y}<y_n$ for all
	$\vec{u} \in Y_n$ with $\norm{\vec{u}}_{Y_n}=R$. As above, we obtain
	$\frac{R}{y_n} > f(n)$. This and the definition of $f(n)$ imply the
	existence of a function $\vec{u} \in Y_n$ such that it holds
	$\norm{\vec{u}}_{Y_n} < \frac{R}{y_n} \norm{\vec{u}}_Y$. We may
	scale $\vec{u}$ such that $\norm{\vec{u}}_{Y_n} = R$ and it follows
	\begin{equation*}
		R = \norm{\vec{u}}_{Y_n} < \frac{R}{y_n} \norm{\vec{u}}_Y < R, 
	\end{equation*}
	which is again a contradiction. 
	
	Hence, there are $\vec{v^n}, \vec{w^n} \in Y_n$ with
	$\norm{\vec{v^n}}_{Y_n} = \norm{\vec{w^n}}_{Y_n} = R$ and
	$\norm{\vec{v^n}}_{Y} \leq y_n \leq \norm{\vec{w^n}}_{Y}$ for every
	$n\in\N$. By assumption, the mapping
	$\vec{u}\mapsto\norm{\vec{u}}_Y$ is continuous on $Y_n$. We apply
	the mean value theorem on the (path-connected) sphere
	$\left\{ \norm{\vec{\cdot}}_{Y_n} = R \right\}$ and obtain functions
	$\vec{u^n} \in V_p \cap Y$ with $\norm{\vec{u^n}}_{Y_n}=R$ and
	$\norm{\vec{u^n}}_{Y} = y_n$. In Step 2 we proved that such an
	element $\vec{u^n}$ solves \eqref{eq:eq1} for all sufficiently large
	$n$.
\end{proof}

Lemma \ref{lem:no_brouwer2} shows that, without assuming additional
regularity, it is impossible to find a radius $R$ such that local
coercivity is fulfilled and Brouwer's fixed point theorem becomes
applicable. Consequently, the authors view the existence proof in
\cite[Theorem 1.3]{Sin} with suspicion; in particular, the requirements for
Brouwer's fixed point theorem do not seem to be satisfied in our eyes.
We conclude from Lemma \ref{lem:no_brouwer2} that it is impossible to
modify the proof of existence theorem \ref{thm:main_thm} such that it
avoids the critical regularity assumption within the framework of
pseudomonotone operator theory.

\bibliographystyle{abbrv}
\bibliography{literatur}
\addcontentsline{toc}{section}{References}

\end{document}